\theoremstyle{plain}
\newtheorem{theorem}{Theorem}[section]
\newtheorem{lemma}[theorem]{Lemma}
\newtheorem{proposition}[theorem]{Proposition}
\newtheorem{corollary}[theorem]{Corollary}
\theoremstyle{definition}
\newtheorem{definition}[theorem]{Definition}
\newtheorem{notation}[theorem]{Notation}
\newtheorem{remark}[theorem]{Remark}
\def\@map#1#2[#3]{\mbox{$#1 \colon #2 \longrightarrow #3$}}
\def\map#1#2{\@ifnextchar [{\@map{#1}{#2}}{\@map{#1}{#2}[#2]}}
\newcommand{\present}[1]{\ensuremath{\left\langle #1\right\rangle}}
\newcommand{\set}[2]{\ensuremath{\left\{#1 \,\colon #2\right\}}}
\newcommand{\labelarrow}[1]{\xrightarrow[\hspace*{10pt}]{#1}}
\newcommand{\evalat}[1]{\bigr\rvert_{#1}}
\providecommand{\abs}[1]{\lvert#1\rvert}
\providecommand{\norm}[1]{\lVert#1\rVert}
\DeclareMathOperator*{\Cay}{Cay}
\DeclareMathOperator*{\topp}{topp}
\newcommand{\cC}{\mathcal{C}}
\newcommand{\cS}{\mathcal{S}}
\newcommand{\SI}{\ensuremath{\mathbb{S}^1}}
\newcommand{\N}{\ensuremath{\mathbb{N}}}
\newcommand{\hvol}{h_{\text{vol}}}
\newcommand{\htop}{h_{\text{top}}}
\newcommand{\opp}[1]{\ensuremath{#1^{\text{opp}}}}
\newcommand{\sbmod}[2]{\ensuremath{\left[#1\right]_{#2}}}
\title[Volume entropy for surface groups]{Volume entropy for
minimal presentations of surface groups in all ranks}
\author{Llu\'{\i}s Alsed\`a, David Juher, J\'er\^ome Los \and Francesc
Ma\~{n}osas}
\address{Departament de Matem\`atiques, Edifici Cc, Universitat
Aut\`onoma de Bar\-ce\-lo\-na, 08913 Cerdanyola del Vall\`es,
Barcelona, Spain}
\email{alseda@mat.uab.cat}
\address{Departament d'Inform\`atica i Matem\`atica Aplicada,
Universitat de Girona, Llu\'{\i}s Santal\'o s/n, 17071 Girona, Spain}
\email{juher@ima.udg.edu}
\address{Aix-Marseille Universit\'e,
Institut Mathematiques de Marseille UMR 7373,
39 Rue F. Joliot Curie, 13013  Marseille,
France}
\email{los@cmi.univ-mrs.fr}
\address{Departament de Matem\`atiques, Edifici Cc, Universitat
Aut\`onoma de Bar\-ce\-lo\-na, 08913 Cerdanyola del Vall\`es,
Barcelona, Spain}
\email{manyosas@mat.uab.cat}
\thanks{The authors have been partially supported by MINECO grant
numbers MTM2008-01486 and MTM2011-26995-C02-01.
This work has been carried out thanks to the support of the
ARCHIMEDE Labex (ANR-11-LABX- 0033).}
\subjclass{Primary: 57M07, 57M05.  Secondary: 37E10, 37B40, 37B10}
\keywords{Surface groups, Bowen-Series Markov maps, topological
entropy, volume entropy}
\date{June 12, 2014}
\begin{document}
\begin{abstract}
We study the volume entropy of a class of presentations (including the
classical ones) for all surface groups, called \emph{minimal geometric
presentations}.
We rediscover a formula first obtained by Cannon and Wagreich
\cite{CW} with the computation in a non published manuscript by
Cannon \cite{Can80}.
The result is surprising: an explicit polynomial of degree $n$, the
rank of the group, encodes the volume entropy of all  classical
presentations of surface groups.
The approach we use is completely different.
It is based on a dynamical system construction following an idea due
to Bowen and Series \cite{BS} and extended to all geometric
presentations in \cite{Los}. The result is an explicit formula for the
volume entropy of minimal presentations for all surface groups,
showing a polynomial dependence in the rank $n>2$. We prove that for a
surface group $G_n$ of rank $n$ with a classical presentation $P_n$
the volume entropy is $\log(\lambda_n)$, where $\lambda_n$ is the
unique real root larger than one of the polynomial
\[
x^{n} - 2(n - 1) \sum_{j=1}^{n-1} x^{j} + 1.
\]
\end{abstract}
\maketitle

\section{Introduction}\label{sec:Intro}
In the beginning of the 80's several breakthroughs occurred in group
theory. The main one was the development of \emph{large scale
geometry} for groups, largely due to M. Gromov with, for instance,
the classification of groups with polynomial growth function
\cite{Gr2} or the introduction of the now standard notion of
\emph{hyperbolic groups} \cite{Gr1}. At about the same time R.
Grigorchuck \cite{Gri} found a class of groups with \emph{intermediate
growth function}. In all these classes of groups the \emph{growth
function} plays a central role. The growth function depends on the
generating set $X$ or on the presentation $P=\present{X/R}$ of the
group $G$. It is defined as the map $\N \mapsto \N$ such that
\[
n \mapsto f_{G,P}(n) = \operatorname{Card}
  \set{g \in G}{\operatorname{length}_X(g) \le n}.
\]
From the growth function $f_{G,P}$ several asymptotic functions are
defined such as the \emph{volume entropy} or the \emph{growth series}
also called the \emph{Poincar\'e series}.

The computational issues appeared also at about the same period. An
idea due to J. Cannon \cite{Can84} allows an inductive way to
describe geodesics in the Cayley graph $\Cay^1(G,P)$ via the notion
of \emph{cone types}. This notion has been intensively used later on
by Epstein, Cannon, Levy, Holt, Patterson, Thurston \cite{ECLHPT}
with the introduction of a very large class of groups, called
\emph{automatic}, that contains the hyperbolic groups of Gromov. The
computation of the growth function or the growth series becomes
possible in principle from a geodesic automatic structure, when it
exists. This is the case for hyperbolic groups. This computation, as
it was  noticed in \cite{CW}, can also be obtained using the
Floyd-Plotnick method \cite{FP}.

In practice, finding an explicit geodesic automatic structure from
the presentation is not so simple. For free groups with the free
presentation all the computations are easy and, for instance, the
volume entropy is simply $\log(2n-1)$, for the free group of rank
$n$ (see for instance \cite{DlH}). The next simple case is the class
of surface groups. For the classical presentations of surface
groups, the growth series appeared in a paper by Cannon and Wagreich
\cite{CW} without the explicit computation, leading to those series
that were earlier obtained in a non published manuscript of Cannon
\cite{Can80}. For hyperbolic groups, the existence of a geodesic
automatic structure for each presentation implies that the growth
series is a rational function (see \cite{ECLHPT, Can84}). In this
case the volume entropy (sometimes called the critical exponent) is
related to the largest pole of the growth series, i.e. the largest
root of the denominator of the growth series (see for instance
\cite{Cal}). The result of Cannon and Wagreich  for the classical
presentations of surface groups shows that the denominator $Q_{n}$
of the growth series is an explicit polynomial depending on the rank
$n\ge 3$ of the surface group:
\begin{equation}\label{thepolynomial}
Q_n(x) := x^{n} - 2(n - 1) \sum_{j=1}^{n-1} x^{j} + 1.
\end{equation}

The fact that a single, explicit polynomial could encode the volume
entropy for all surface groups is mysterious a priori, specially since
the original computations of Cannon did not appear in published form.

In this paper we rediscover the polynomial $Q_{n}(x)$ from a
completely different point of view and we hope that a part of the
mystery will disappear. In our approach we compute the volume entropy
of the group presentations from a dynamical system argument based on
an idea due to R. Bowen and C. Series \cite{BS} and generalized in
\cite{Los}.

The original idea of Bowen and Series was to associate a specific map
$\map{\Phi_{B-S}}{\SI}$ to a particular action of the group
$G = \pi_1(S)$ on the hyperbolic plane $\mathbb{H}^2$, where $\SI$ is
seen as the space at infinity of $\mathbb{H}^2$.
In \cite{Los}, $\SI$ is considered as the Gromov boundary of the
group $\partial G$ and a map $\map{\Phi_{P}}{\SI}$ is constructed for
each presentation $P$ in a class, called \emph{geometric},
characterized by the fact that the two dimensional Cayley complex
$\Cay^2(G, P)$ is planar. The maps $\Phi_{P}$ are called
\emph{Bowen-Series-Like} and they satisfy several interesting
properties, in particular the volume entropy of the presentation $P$
equals the topological entropy of the map $\Phi_{P}$. In addition the
map $\Phi_{P}$ admits a finite Markov partition and the computation of
the topological entropy for such maps is standard.

For any surface $S$, the classical presentation of the corresponding
surface group $\Gamma = \pi_1(S)$ is geometric. These classical
presentations are given by the minimal number of generators $n$ and
one relation of length $2n$. For orientable surfaces, $n$ is even and
equals $2g$, where $g$ is the genus of the surface. In this case, the
classical relation is a product of $g$ commutators. In the
non-orientable case, there is no restriction on the parity of $n$ and
the relation is given by the product of the squares of all generators
(see for instance~\cite{Sti}). A presentation with the minimal number
of generators is called \emph{minimal}. The rank 2 cases (torus and
Klein bottle) are, as usual, special: they are not hyperbolic, the
growth function is quadratic and thus the volume entropy is $0$. For
$n > 2$ all minimal geometric presentations are proved to have the
minimal volume entropy, among geometric presentations. It is
conjectured that this minimum should be an absolute minimum in
\cite{Los}.


We rediscover here the surprising explicit polynomial $Q_n(x),$
$r\ge 3$.
We will see that $Q_n(x)$ has a unique real root larger than one
denoted $\lambda_n$. More precisely we prove:

\begin{theorem}\label{MainTh}
For $n > 2$, let $\Gamma$ be a surface group of rank $n$ with a
minimal geometric presentation $P$. Then, the volume entropy of
$\Gamma$ with respect to the presentation $P$ is $\log(\lambda_n)$.
Moreover, for $n\ge 4$, $\lambda_n$ satisfies:
\[
2n-1 - \frac{1}{(2n-1)^{n-2}} < \lambda_n < 2n-1.
\]
\end{theorem}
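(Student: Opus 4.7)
The plan is to combine two ingredients. First, by \cite{Los}, the volume entropy $\hvol(\Gamma, P)$ equals the topological entropy $\htop(\Phi_P)$ of the Bowen-Series-like map; and since $\Phi_P$ admits a finite Markov partition, $\htop(\Phi_P) = \log(\rho(M))$ for the associated nonnegative transition matrix $M$. Hence the theorem reduces to (i) writing $M$ explicitly, (ii) identifying its Perron-Frobenius eigenvalue as a root of $Q_n(x)$, and (iii) establishing the numerical bounds.

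I would start by constructing $\Phi_P$ and its Markov partition for a minimal geometric presentation. Minimality forces $n$ generators and a single relator of length $2n$, while geometricity forces $\Cay^2(\Gamma, P)$ to be planar with a unique 2-cell of boundary length $2n$. Labeling the $2n$ letters of the doubled generating set cyclically along this 2-cell yields a partition of $\SI$ into $2n$ arcs $I_1, \ldots, I_{2n}$, on each of which $\Phi_P$ acts by a single group multiplication. The combinatorics of how images of adjacent arcs overlap---determined by the planar structure of the Cayley complex and the topology of the 2-cell---dictates the entries of $M$; by direct inspection these entries depend only on $n$, not on the particular relator, which is precisely why all minimal geometric presentations yield the same entropy.

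The second step is algebraic: compute the characteristic polynomial $p_M(x)$ and extract the factor governing $\rho(M)$. The symmetries of the boundary of the 2-cell impose a (block) circulant-like structure on $M$ that diagonalizes via the discrete Fourier transform. After cancelling the trivial factors arising from eigenvalues at roots of unity (coming from the involution $g \leftrightarrow g^{-1}$ and the rotational symmetry of the polygon), one should recover $Q_n(x)$ as the essential factor of $p_M(x)$. A monotonicity argument on $Q_n$ on $[1, \infty)$---for instance checking that $Q_n$ has a single sign change in that range---then shows that $Q_n$ has a unique real root $\lambda_n > 1$, which is necessarily the Perron-Frobenius eigenvalue $\rho(M)$, completing the entropy computation.

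Finally, for the explicit bounds, I would evaluate $Q_n$ at the two proposed endpoints. Using the geometric sum $\sum_{j=1}^{n-1}(2n-1)^j = \frac{(2n-1)^n - (2n-1)}{2(n-1)}$, a direct computation gives $Q_n(2n-1) = (2n-1) + 1 = 2n > 0$, which combined with $Q_n(x) \to +\infty$ as $x \to +\infty$ yields $\lambda_n < 2n-1$. For the lower bound, a careful binomial expansion of $Q_n\bigl(2n-1 - (2n-1)^{-(n-2)}\bigr)$, tracking the leading-order cancellations and bounding the lower-order remainder, should give a negative value, proving $\lambda_n > 2n-1 - (2n-1)^{-(n-2)}$. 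The principal obstacle is step (i)--(ii): one must track the Bowen-Series combinatorics carefully (in particular the behavior of arcs near their endpoints, which is where the Markov structure is most delicate) and then perform the determinantal computation that isolates $Q_n(x)$ as the nontrivial factor of $p_M(x)$.
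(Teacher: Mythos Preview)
Your high-level strategy (volume entropy $=$ $\htop(\Phi_P)$ $=$ $\log\rho(M)$, then compute $\rho(M)$ via a characteristic-polynomial factor, then evaluate $Q_n$ at the two endpoints) matches the paper, and your treatment of the bounds is essentially the paper's Lemma~\ref{entropybounds}. However, two concrete points in your plan are not correct as stated.

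First, the $2n$ arcs $I_1,\dots,I_{2n}$ do \emph{not} form a Markov partition for $\Phi_P$: the image $\Phi_P(I_x)$ is not a union of full arcs $I_y$. The paper shows that one must refine each $I_x$ into $2n-1$ subintervals (the intervals $L_x^j$, $C_x^L$, $C_x$, $C_x^R$, $R_x^j$ of \eqref{eq:PartInt}), giving a transition matrix of size $2n(2n-1)\times 2n(2n-1)$. This refinement is exactly the ``delicate behavior near arc endpoints'' you allude to, but it is not optional; without it there is no Markov matrix to speak of.

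Second, your claim that ``by direct inspection these entries depend only on $n$, not on the particular relator'' is false: different minimal geometric presentations produce genuinely different Markov matrices (compare $M_n^{^+}$ and $M_n^{^-}$ in Sections~\ref{sec:topent}--\ref{sec:disoriented}). The paper does not argue that the matrices coincide; instead it invokes the last assertion of Theorem~\ref{old-theo:2.3} (all minimal geometric presentations share the same, minimal, volume entropy) to reduce to one carefully chosen \emph{symmetric} presentation $P_n^{^\pm}$, whose cyclic ordering has the special feature that the edge opposite $x$ is $x^{-1}$. It is this choice that produces the $(2n,2n-1)$-block-circulant structure (Lemma~\ref{Markovcirculating}). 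From there the paper does not diagonalize by DFT and cancel root-of-unity factors as you propose; it uses an elementary spectral-radius identity for block-circulant matrices (Lemma~\ref{Spectralcirculating}) to collapse to a $(2n-1)\times(2n-1)$ matrix $\mathsf{C}_n$, exploits a further central symmetry to pass to an $n\times n$ matrix $\mathsf{SC}_n$, and finally applies the Rome method of \cite{BGMY} to get $Q_n$ directly as the characteristic polynomial of $\mathsf{SC}_n$ via a $2\times 2$ determinant. Your Fourier/cancellation approach might be workable on $M_n^{^+}$, but you would still need a separate argument (the paper's Lemma~\ref{lem:discirculant} on ``disoriented'' block-circulant matrices) to handle the non-orientable presentations, where two block rows are twisted by $\mathbf{J}_{2n-1}$ and the matrix is no longer block-circulant.
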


The above inequalities show that the difference between the volume
entropy for the surface group and for the free group of the same rank
is explicitly very small.

The genesis of this rediscovery is interesting. The dynamical system
approach discussed above allows to compute the volume entropy of any
geometric presentation $P$ from an explicit Bowen-Series-Like map:
$\map{\Phi_{P}}{\SI}$. We developed an algorithm to compute the
entropy of such maps, for the classical presentations of orientable
surfaces, via the well known kneading invariant technique of Milnor
and Thurston \cite{MT}. The polynomial $Q_n(x)$ appears that way in
the computation for all orientable surfaces of genus $g\leq 43$. To
obtain the theorem we needed to compute the determinant of a matrix
whose size grows either linearly in $n$ with polynomial entries using
the Milnor-Thurston method, or quadratically in $n$ with integer
entries using the Markov matrix method. The computation leading to
the proof of the theorem became possible by a succession of two
surprises. First, by a particular choice of a minimal presentation
$P_n^{^+}$ the corresponding BSL map $\Phi_{P_n^{^+}}$ admits an
explicit  symmetry of order $2n$. By a quotient process, the Markov
matrix is reduced to an integer matrix whose size grows linearly in
$n$.
Then a method, developed in \cite{BGMY} under the nice name of the
``Rome technique'', was directly applicable to our case and reduced
the computation to a $2\times 2$ matrix with polynomial entries and no
computer was necessary.

The paper is organized as follows. In Section~\ref{sec:BSLMaps} we
recall the necessary ingredients for the construction of the map
$\Phi_{P}$ in some particular geometric presentations. The map is
then given explicitly for the particular minimal geometric
presentations with a symmetry property, together with its Markov
partition. In Section~\ref{sec:topent} we obtain a first formula for
the volume entropy in the orientable case, in terms of the Markov
matrix of the Bowen-Series-Like map. We exploit the symmetry of the
presentation to obtain a formula for the volume entropy in terms of
the spectral radius of a simpler matrix called the \emph{compacted
matrix of rank $n$}.
In Section~\ref{sec:disoriented} we extend these results to the
non-orientable case by showing that the volume entropy in this case
is also the logarithm of the spectral radius of the compacted
matrix of rank $n$.
The computation of the spectral radius of this new matrix is still
somewhat difficult. In Section~\ref{sec:CtoSC} we obtain a
simpler matrix with the same spectral radius.
Finally, in Section~\ref{sec:Rome}, the ``Rome method'' is
explained and applied to compute this spectral radius, and proving
Theorem~\ref{MainTh}.

\section{Bowen-Series-Like maps for geometric
presentations}\label{sec:BSLMaps}

In this section we review the necessary ingredients for the
construction of the Bowen-Series-Like maps defined in \cite{Los}.


\subsection{Geometric presentations}\strut\newline
Let $P = \present{X/R} = \present{x_1^{\pm 1}, x_2^{\pm 1},\dots,
x_n^{\pm 1}/R_1, \dots, R_k} $ be a presentation of a group
$\Gamma$. Recall that the Cayley graph $\Cay^1(\Gamma,P)$ is a
metric space and let $B_m$ be the ball of radius $m$ centred at the
identity. We denote the cardinality of any finite set $A$ by $|A|$.
The \emph{volume entropy} of $\Gamma$ with respect to the presentation $P$
is denoted by $\hvol(\Gamma,P)$ and defined as:
\[
  \lim_{m\to\infty} \frac{1}{m} \log \abs{B_m}.
\]

A presentation $P$ of a surface group $\Gamma=\pi_1(S) $ is called
\emph{geometric} if the Cayley 2-complex $\Cay^2(\Gamma,P)$ is a
plane. In particular the Cayley graph $\Cay^1(\Gamma,P)$ is a planar
graph. A geometric presentation $P$ is called \emph{minimal} if the
number of generators is minimal. For a group of an orientable
surface of genus $g$ it is well known that the minimal number of
generators is $2g$ (see \cite{Sti} for instance) and, in this case,
there is a presentation with a single relation of length $4g$. The
standard classical presentation in this case is the following:
\[
\present{x_1^{\pm 1}, y_1^{\pm 1}, x_2^{\pm 1},
                 y_2^{\pm 1}, \dots, x_g^{\pm 1}, y_g^{\pm 1}
                 / \prod _{i=1}^{g} [x_i, y_i]},
\]
where $[x_i, y_i] =  x_i \cdot y_i \cdot x_i^{-1} \cdot y_i ^{-1}$
is a commutator.

For a rank $n$ group of a non-orientable surface there is also a
classical presentation with a single relation of length $2n$:
\[
\present{x_1^{\pm 1}, x_2^{\pm 1}, \dots, x_n^{\pm 1}
                 / \prod _{i=1}^{n} x_i^2}.
\]

It is easy to check that such classical presentations are geometric
(see below).


Geometric presentations satisfy very simple combinatorial
properties:

\begin{lemma}[Floyd and Plotnick \cite{FP}]\label{old-lem:2.2}
If $P = \present{x_1^{\pm 1}, \dots, x_n^{\pm 1} / R_1, \dots, R_k}$
is a geometric presentation of a surface group $\Gamma$ then $P$
satisfies the following properties:
\begin{enumerate}
\item The set $\{x_1^{\pm 1}, \dots, x_n^{\pm 1}\}$ admits a
cyclic ordering that is preserved by the $\Gamma$-action.

\item Each generator appears exactly twice (with plus or minus
exponent) in the set $R = \{R_1, \dots, R_k\}$ of relations.

\item Each pair of adjacent generators, according to the cyclic
ordering~(a), appears exactly once in $R$ and defines uniquely a
relation $R_i \in R$.
\end{enumerate}
\end{lemma}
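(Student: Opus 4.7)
The plan is to derive all three statements directly from the topology of $\Cay^2(\Gamma, P)$, which by the geometric hypothesis is homeomorphic to the plane, combined with the fact that $\Gamma$ acts on it freely, cellularly, and preserving edge labels (since the action is by left multiplication).

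First I would examine the link of the identity vertex $e$. The $2n$ half-edges emanating from $e$ are naturally labeled by the generators $x_i^{\pm 1}$, and the planar embedding induces a cyclic order on them. Because $\Gamma$ acts by cellular homeomorphisms and is simply transitive on vertices, it identifies the link of $e$ with the link of every other vertex in a way that respects labels. This yields the $\Gamma$-invariant cyclic ordering required by (a).

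For part (b), the key point is that in a locally finite CW-complex homeomorphic to the plane, every edge is on the boundary of exactly two 2-cells. The half-edge labeled $x_i$ emanating from $e$ therefore belongs to exactly two faces, and each such face corresponds to an occurrence of $x_i$ or $x_i^{-1}$ in some relator of $R$ (after reading its boundary as a cyclic word). Hence $x_i$ appears exactly twice across the relations. For part (c), two consecutive half-edges at $e$ in the cyclic order at $e$ bound a single corner of the planar complex, and this corner lies in a unique 2-cell; the boundary word of that face, read starting at $e$, provides a unique relator $R_j$ containing the pair of adjacent generators.

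The main obstacle I anticipate is making the equivariance claim in (a) fully precise in the non-orientable case: there the $\Gamma$-action need not preserve the planar orientation, so some elements reverse the cyclic order at a vertex. One must therefore either weaken \emph{preserved} to \emph{preserved up to inversion}, or use the involution $x_i \leftrightarrow x_i^{-1}$ to check that the labeled cyclic datum is in fact invariant under such reflections. Once this point is resolved, statements (b) and (c) are purely local and follow at once from the planar link of $e$.
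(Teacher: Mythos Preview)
The paper does not supply its own proof of this lemma: it is stated with attribution to Floyd and Plotnick \cite{FP} and used as a black box. So there is no ``paper's approach'' to compare against; your sketch is essentially the standard argument one would give, reading the three properties off the link of the vertex in the (planar) Cayley $2$-complex, or equivalently off the link of the unique vertex in the presentation $2$-complex, which must be a single circle.

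Two small points on your write-up. For (b), the object that lies on exactly two faces is the \emph{edge} labelled $x_i$ (equivalently, the edge $x_i$ in the one-vertex presentation complex), not the half-edge; each of those two incidences contributes one occurrence of $x_i$ or $x_i^{-1}$ to the boundary word of a relator, giving the count of two. Your half-edge phrasing is slightly off but the conclusion is right. For (a), your worry about the non-orientable case is legitimate: some deck transformations reverse the planar orientation and hence the cyclic order. The usual resolution is exactly what you suggest, namely to interpret ``cyclic ordering'' as the unoriented cyclic structure (a dihedral rather than cyclic datum), which is genuinely $\Gamma$-invariant; the paper itself later fixes an orientation convention (see Notation~\ref{notationCicOrd} and the clockwise convention on page~\pageref{clockwise}) rather than claiming one is canonically given.
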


The following statement is the main ingredient to compute the volume
entropy of a geometric presentation. The statement also contains the
main result about minimal geometric presentations. In what follows
$\SI$ will denote a (topological) circle. Recall that any surface
group $\Gamma$ is Gromov-hyperbolic \cite{Gr1} and its boundary is:
$\partial \Gamma\simeq\SI$.

Let us introduce the notion of a Markov partition. Let $W$ be a
finite set of $\SI.$ An interval of $\SI$ will be called
\emph{$W$-basic} if it is the closure of a connected component of
$\SI\setminus W$. Observe that two different $W$-basic intervals
have pairwise disjoint interiors. Let {\map{\phi}{\SI}} and let $W
\subset \SI$ be finite. We say that $W$ is a \emph{Markov partition
of $\phi$} if $W$ is $\phi-$invariant (i.e., $\phi(W) \subset W$)
and the image by $\phi$ of every basic interval is a union of basic
intervals.

\begin{theorem}[Los \cite{Los}]\label{old-theo:2.3}
Let $P$ be a geometric presentation of a surface group $\Gamma$.
Then there exists a map {\map{\Phi_{P}}{\partial\Gamma = \SI}} with
the following properties:
\begin{enumerate}
\item The map $\Phi_{P}$ is Markov, i.e. it admits a finite Markov
partition.

\item The topological entropy of $\Phi_{P}$, $\htop(\Phi_{P}),$ is
equal to the volume entropy $\hvol(\Gamma,P)$.
\end{enumerate}
In addition, the volume entropy is minimal, among geometric
presentations, for all minimal geometric presentations.
\end{theorem}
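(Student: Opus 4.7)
The plan is to follow the Bowen--Series construction, adapted purely to the combinatorial data of the presentation. Using the cyclic ordering of the generators from Lemma~\ref{old-lem:2.2}(a), I would partition $\SI = \partial\Gamma$ into $2n$ closed arcs $I_{x_i^{\pm 1}}$ indexed by the generators and their inverses, arranged around the circle in the prescribed cyclic order; the endpoints of these arcs are the boundary fixed points of suitable elements (essentially the axes of the generators, or equivalently the limit points associated with the edges of the planar Cayley $2$-complex). On $I_{x_i^{\pm 1}}$ I would set $\Phi_P$ to be the boundary action of $x_i^{\mp 1}$, the intuition being that a point $\xi\in I_{x_i^{\pm 1}}$ represents an infinite geodesic ray whose first letter is $x_i^{\pm 1}$, and $\Phi_P$ just erases that first letter.

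The first task is to verify~(a), the Markov property. The key input is Lemma~\ref{old-lem:2.2}(b)--(c): each generator appears exactly twice in the set of relators, and each pair of cyclically adjacent generators is linked by a unique relation. Translating this into the planar Cayley $2$-complex $\Cay^2(\Gamma,P)$, the image under $\Phi_P$ of an endpoint of a basic interval is again an endpoint, because the two adjacent arcs it separates are identified across a relation. Since there are only finitely many such partition points, one obtains a finite Markov partition after iterating $\Phi_P$ on the set of initial endpoints until it stabilises (eventual periodicity, controlled by the relations).

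For~(b) I would couple the dynamics to the combinatorics of words. The Markov partition induces a symbolic coding in which admissible words of length $m$ in the generators correspond bijectively, up to a bounded-multiplicity error, to nested intersections of basic intervals. I would then argue that an admissible word is precisely a word containing no subword equal to a ``shortcut'' across a relation of $P$; by the geometric nature of $P$ such words represent geodesics in $\Cay^1(\Gamma,P)$, and conversely every geodesic word arises this way. Counting preimages of a generic point under $\Phi_P^m$ therefore matches the counting of elements in the ball $B_m$ up to a multiplicative constant, giving $\htop(\Phi_P)=\hvol(\Gamma,P)$, where $\htop(\Phi_P)$ is computed from the spectral radius of the Markov transition matrix.

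Finally, for the minimality statement, the plan is to compare the Markov transition matrices associated with geometric presentations of different sizes. Any geometric presentation of $\Gamma$ with more than the minimal number of generators must, by Lemma~\ref{old-lem:2.2}, have a strictly larger partition, and combinatorial inspection shows that its transition matrix dominates (entry-wise, after a suitable block comparison) the one coming from a minimal presentation; monotonicity of the Perron--Frobenius eigenvalue for nonnegative matrices then yields the inequality on entropies. The step I expect to be the main obstacle is the tightness of the counting in~(b): one has to show that the symbolic coding of $\Phi_P$ identifies geodesic words without losing or duplicating essentially more than a bounded number of them, and this requires a careful geodesic-uniqueness argument exploiting the planarity of $\Cay^2(\Gamma,P)$ together with the adjacency conditions of Lemma~\ref{old-lem:2.2}.
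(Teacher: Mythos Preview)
The paper does not prove Theorem~\ref{old-theo:2.3}; it is quoted from \cite{Los}. What the paper does, in Sections~2.2 and~2.3, is recall the construction of $\Phi_P$ and of its Markov partition in enough detail to make them explicit for the particular symmetric presentations $P_n^{\pm}$. So there is no ``paper's own proof'' to compare against, only the construction summarised there; I compare your sketch to that.

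Your outline has the right shape but two concrete gaps. First, the endpoints of the intervals $I_x$ are not ``boundary fixed points of suitable elements'' or ``axes of the generators''. They are the limit points $(y_i,y_{i+1})^\infty$ of the \emph{bigon rays} built in Subsection~\ref{BigonRays}: one iterates the minimal bigon associated to each adjacent pair, passing at each step to the opposite pair at the top vertex (Lemma~\ref{old-lem:2.5}). This is not a cosmetic difference: the shift description $\Phi_P(\xi)=x^{-1}(\xi)$ on $I_x$ and the subsequent invariance arguments rely on the specific ray-writing that these bigon limits provide (Definition~\ref{BSLM}(ii) and \eqref{old-eq:18}). Second, the $2n$ endpoints $\partial I_x$ by themselves do \emph{not} form a Markov partition, and your ``iterate until it stabilises'' step hides the actual content. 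One must add, inside each $I_x$, the left and right subdivision points $\mathcal{L}_x=\{\lambda_x^1,\dots,\lambda_x^{n-1}\}$ and $\mathcal{R}_x=\{\rho_x^1,\dots,\rho_x^{n-1}\}$ of Lemma~\ref{old-lem:2.8}; only then is the resulting finite set $\cS$ invariant under $\Phi_P$ and a genuine Markov partition (Lemma~\ref{old-lem:2.9}). The explicit images \eqref{eq:TheImage} show that $\Phi_P$ does not simply permute the $I_x$ but sends, e.g., $C_x^L$ onto a union of many basic intervals spread over several $I_{y_k}$.

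Your minimality argument is also off. An entry-wise domination of Markov matrices across presentations of different sizes is neither available nor what \cite{Los} does; that paper obtains minimality by a separate argument specific to geometric presentations, and the present paper simply quotes the conclusion.
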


The map $\Phi_{P}$ satisfies more properties that are not needed
here. Property~(a) is specially interesting for computations since,
for Markov maps, it is classical that the topological entropy is
nothing but the logarithm of the spectral radius of a finite integer
matrix, the Markov transition matrix (see \cite{Shu} or \cite{ALM}
for instance). The goal of the next sections is to make such a
Markov partition explicit in the particular cases of minimal geometric
presentations.

\subsection{Construction of the Bowen-Series-Like map}\strut

In this subsection we review the definition and the necessary
properties of the BSL maps, in the particular case of minimal
geometric presentations.

\subsubsection{Bigons}
As we have seen, a presentation $P = \present{X/R}$ defines the
Cayley graph $\Cay^1(\Gamma,P)$ and the Cayley 2-complex
$\Cay^2(\Gamma,P)$. A \emph{bigon} in $\Cay^1(\Gamma,P)$ is a pair
of distinct geodesics $\{\gamma_1, \gamma_2\}$ connecting two
vertices $\{v,v'\} \in \Cay^1(\Gamma,P)$. We denote by $B_v(x,y)$
the set of bigons $\{\gamma_1, \gamma_2\}$ whose initial vertex is
$v$ and so that the geodesic $\gamma_1$ starts at $v$ by the edge
labelled $x$ and $\gamma_2$ starts at $v$ by the edge labelled $y$,
with $x\neq y$. By the $\Gamma$-action we can fix the initial vertex
$v$ to be the identity and we denote $B_{\operatorname{id}}(x,y)$ by
$B(x,y)$.

For geometric presentations of surface groups the set of bigons is
particularly simple.

\begin{lemma}\label{old-lem:2.4}
If $P = \present{X/R}$ is a geometric presentation of a surface
group $\Gamma$ then the set of bigons $B(x,y)$ is non empty if and
only if $(x,y)$ is an adjacent pair of generators, according to the
cyclic ordering of Lemma~\ref{old-lem:2.2}(a). In addition, if
$(x,y)$ is an adjacent pair of generators there is a unique bigon
$\beta(x, y) \in B(x,y)$ of finite minimal length, called
\emph{minimal bigon}.
\end{lemma}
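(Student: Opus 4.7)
The plan is to use the planarity of the Cayley 2-complex $\Cay^2(\Gamma, P)$ guaranteed by the geometric hypothesis, together with the combinatorial structure listed in Lemma~\ref{old-lem:2.2}, to set up a correspondence between bigons in $B(x,y)$ and 2-cells of $\Cay^2(\Gamma, P)$ incident to the identity. For the direction \emph{``adjacency implies existence of a bigon''}, I would apply Lemma~\ref{old-lem:2.2}(c) to the adjacent pair $(x,y)$ to extract the unique relation $R_i \in R$ determined by this pair. After cyclic permutation and inversion, write $R_i = s_1 s_2 \cdots s_{2m}$ with $s_1=x$ and $s_{2m}=y^{-1}$, so that the 2-cell $F$ bounded by $R_i$ is incident to $\operatorname{id}$ with $x$ and $y$ as adjacent boundary edges at $\operatorname{id}$. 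Splitting $\partial F$ at the vertex $v := s_1 s_2\cdots s_m$ opposite to the identity produces two paths of length $m$ from $\operatorname{id}$ to $v$ (the equality of the endpoints comes from the identity $s_1\cdots s_m = (s_{m+1}\cdots s_{2m})^{-1}$), one starting with $x$ and the other with $y$. It remains to check that both halves are geodesics; this is the only non-combinatorial input, and it is a standard property of surface groups with geometric presentations.

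For the converse direction, suppose $B(x,y)\neq\emptyset$ and pick $\{\gamma_1,\gamma_2\}\in B(x,y)$ of minimal total length. By planarity, the two geodesics bound a topological disk $D\subset\Cay^2(\Gamma,P)$. The 2-cells of $D$ incident to $\operatorname{id}$ form, by the cyclic ordering of Lemma~\ref{old-lem:2.2}(a), a cyclic \emph{fan} whose extreme boundary edges at $\operatorname{id}$ are precisely $x$ and $y$. If this fan contained more than one 2-cell, there would be a generator $z$ lying strictly between $x$ and $y$ in the cyclic order at $\operatorname{id}$ and appearing as an edge internal to $D$. A surgery argument that replaces the initial segment of $\gamma_1$ by the alternative arc of the 2-cell at $\operatorname{id}$ whose boundary contains the edges $x$ and $z$ would then produce a strictly shorter bigon in $B(x,y)$, contradicting minimality. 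Hence the fan consists of a single 2-cell, and Lemma~\ref{old-lem:2.2}(c) identifies this cell with the 2-cell $F$ above, forcing $(x,y)$ to be adjacent and identifying the minimal bigon with the candidate $\beta(x,y)$ produced in the first step.

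Uniqueness of the minimal bigon follows from the same surgery principle: two distinct bigons in $B(x,y)$ of the same minimal length would have to coincide on the initial edges $x$ and $y$, diverge at some vertex $w$, and re-meet at a later common vertex. The region enclosed between them is a non-trivial planar disk, and applying the fan/surgery argument at $w$ again produces a strictly shorter bigon, a contradiction. This completes the three assertions.

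The step I expect to be most delicate is the surgery argument in the converse direction. One has to verify that the replacement path, built from the alternative half of an interior 2-cell, is indeed shorter than the segment it replaces while still beginning with the prescribed edge, and then that the resulting pair of paths can be promoted to a pair of \emph{geodesics} starting with $x$ and $y$. This is essentially a planarity plus length-count exercise, but it relies crucially on the uniqueness of the relation in Lemma~\ref{old-lem:2.2}(c) so that each 2-cell along $\partial D$ is unambiguously identified, and on the fact that, for minimal geometric presentations of surface groups, the two halves of any relator are geodesic of equal length.
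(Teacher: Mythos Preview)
The paper does not give its own proof of this lemma: immediately after the statement it cites \cite[Lemmas~2.6 and 2.12]{Los} and only adds, as a remark, the description of the minimal bigon in the even-length case. So there is no in-paper argument to compare against; what follows evaluates your sketch on its own merits.

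Your forward direction is reasonable, though you silently write the relator as $s_1\cdots s_{2m}$ and split it exactly in half; the lemma is stated for arbitrary geometric presentations, so odd-length relators need a word (or a remark that only the even case is used in this paper). The genuine gap is the surgery step in the converse. You claim that replacing ``the initial segment of $\gamma_1$ by the alternative arc of the $2$-cell at $\operatorname{id}$ whose boundary contains the edges $x$ and $z$'' yields a strictly shorter bigon in $B(x,y)$. Two things fail here. First, beyond its very first edge $x$, the geodesic $\gamma_1$ has no reason to run along the boundary of that $2$-cell, so there is no well-defined initial segment of $\gamma_1$ lying on the cell to swap out. Second, even when $\gamma_1$ does follow that boundary for a while, the alternative arc begins with $z$, not $x$; the resulting pair of paths therefore lies (at best) in $B(z,y)$, not in $B(x,y)$, and no contradiction with minimality in $B(x,y)$ follows. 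The same defect undermines your uniqueness argument, which invokes the identical surgery at an interior vertex $w$. You are right to flag this step as the delicate one; as written it does not go through. A line that does work, and is presumably closer to what is carried out in \cite{Los}, is to first pass to an innermost bigon whose two sides meet only at their endpoints, and then use the combinatorics of Lemma~\ref{old-lem:2.2}(b),(c) together with the fact that both sides are geodesics to force the enclosed disk to be a single $2$-cell; adjacency of $(x,y)$ and uniqueness of the minimal bigon are then immediate from Lemma~\ref{old-lem:2.2}(c).
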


This lemma is proved in \cite[Lemmas~2.6 and 2.12]{Los}. Observe
that each minimal bigon is particularly simple for a geometric
presentation where all relations have even length. Indeed, each pair
of adjacent generators $(x,y)$ defines a unique relation by
Lemma~\ref{old-lem:2.2}(c). The relation can be written, up to
cyclic permutation and inversion ($R_i \rightarrow (R_i )^{-1}$),
as: $R_i = \gamma_1 \cdot (\gamma_2 )^{-1}$, where $\gamma_1$ is a
word (or a path) starting by the letter $x$, $\gamma_2$ starts by
the letter $y$ and $l(\gamma_1) = l(\gamma_2)$. The observation is
that for geometric presentations the two paths $\gamma_1$  and
$\gamma_2$ start at the identity and end at the same vertex (since
$R_i$ is a relation) and are geodesics. In other words the pair
$\{\gamma_1, \gamma_2\}$ is a bigon. It is minimal and unique by
Lemma~\ref{old-lem:2.2}(c).

\subsubsection{Bigon-Rays}\label{BigonRays}
We describe a canonical way to define a point on the boundary
$\partial\Gamma$ associated to an adjacent pair of generators
$(x,y)$. Recall that a surface group is hyperbolic in the sense of
Gromov \cite{Gr1} and its boundary $\partial\Gamma$ is the circle
$\SI$. By definition of $\partial\Gamma$, a point $\xi \in
\partial\Gamma$ is the limit of geodesic rays, for instance starting
at the identity, modulo the equivalence relation among rays that two
rays are equivalent if they stay at a uniform bounded distance from
each others (c.f. \cite{Gr1}). If $\xi \in \partial\Gamma$ is a
point on the boundary we denote by $\{\xi\}$ a geodesic ray starting
at identity and converging to $\xi$.

In what follows, given two integers $k$ and $l$ we will denote $k
\pmod{l}$ by $\sbmod{k}{l}$. Also, we choose $1,2,\dots, l$ as the
representatives of the classes modulo $l;$ that is, $\sbmod{0}{l} =
\sbmod{l}{l} = l.$ However, unless necessary we omit the modulo part
in the notations.

\begin{notation}\label{notationCicOrd}
In what follows we denote the $n$ generators (and their inverses) by
$y_1, y_2, \dots, y_{2n}$ in such a way that $y_{\sbmod{i \pm
1}{2n}}$ are the elements adjacent to $y_i$ with respect to the
cyclic ordering from Lemma~\ref{old-lem:2.2}(a). We denote an
adjacent pair by $(y_i, y_{\sbmod{i+1}{2n}})$ where, by convention,
the edges denoted $y_i$ and $y_{\sbmod{i+1}{2n}}$ are adjacent and
oriented from the vertex. We also adopt the convention that $y_i$ is
on the \emph{left} of $y_{\sbmod{i+1}{2n}}$ (see
Figure~\ref{fig:notationCicOrd}). This convention defines an
orientation of the plane $\Cay^2(\Gamma,P)$.

The parity of the number of adjacent pairs at each vertex implies
that $(y_i, y_{\sbmod{i+1}{2n}})$ defines an \emph{opposite} pair,
with respect to the cyclic ordering of Lemma~\ref{old-lem:2.2}(a),
defined by:
\[
\opp{(y_i, y_{[i+1]_{2n}})} :=
  (y_{\sbmod{i+n}{2n}}, y_{\sbmod{i+n+1}{2n}})
\]
(see Figure~\ref{old-fig:1}).
\end{notation}
\begin{figure}
\begin{tikzpicture}
\tikzstyle{post}=[->,>=stealth,semithick]
\tikzset{->-/.style={decoration={markings,mark=at position #1 with {\arrow{>}}},postaction={decorate}}}
\foreach \angle / \lab in { 90/1, 45/2, 315/n }{
   \draw[post,->-=0.6] (0,0) to node[pos=0.8, below, rotate=\angle] {$y_{_{\lab}}$} (\angle:2);
}
\foreach \angle in {325,335,...,385} { \node at (\angle:1.4) {$\cdot$}; \node at (\angle+180:1.5) {$\cdot$}; }
\foreach \angle / \lab in { 270/n+1, 225/n+2, 135/2n }{
   \draw[post,->-=0.6] (0,0) to node[pos=0.8, above, rotate=(\angle-180)] {$y_{_{\lab}}$} (\angle:2);
}
\end{tikzpicture}
\caption{The labelling of the generators (and the cyclic ordering) fixed in
Notation~\ref{notationCicOrd}.}\label{fig:notationCicOrd}
\end{figure}

We construct a unique infinite sequence of adjacent pairs, bigons
and vertices from any given pair $(y_i, y_{\sbmod{i+1}{2n}})$ by the
following process:
\begin{enumerate}[{\textbf{Step\ }1{.}}]
\item Each adjacent pair, at the identity, defines a unique minimal
bigon $\beta(y_i, y_{i+1})$ by Lemma~\ref{old-lem:2.4}.
The bigon $\beta(y_i, y_{i+1})$ is a pair of ge\-o\-des\-ics
$\{\gamma_{l}, \gamma_{r}\}$, where the indices $l, r$  stand for left
and right, with respect to an orientation of the plane $\Cay^2(\Gamma, P)$.
The geodesics $\{\gamma_{l}, \gamma_{r}\}$ connect the identity to a
vertex $v_1 = v_1[\beta(y_i, y_{i+1})].$

\item The two geodesics $\{\gamma_{l}, \gamma_{r}\}$ end at $v_1$ by
two generators that are adjacent by Lemma~\ref{old-lem:2.4}. Therefore
the bigon $\beta(y_i, y_{i+1})$  defines a unique adjacent pair at
$v_1$, called a \emph{top pair} of $\beta(y_i, y_{i+1})$, which is
denoted:
$\topp[\beta(y_i, y_{i+1})]$, based at $v_1 = v_1[\beta(y_i,y_{i+1})]$
and is uniquely defined by $(y_i, y_{i+1}).$

\item The pair $\topp[\beta(y_i, y_{i+1})]$ defines an opposite pair
at $v_1$, denoted by:
\[
\opp{\bigl(\topp[\beta(y_i, y_{i+1})]\bigr))} := (y_i,
y_{i+1})^{(1)}.
\]

\item We consider then the unique minimal bigon, at $v_1$, defined by
the pair $(y_i, y_{i+1 })^{(1)}$ by Lemma~\ref{old-lem:2.4}:
\[
\beta^{(1)}(y_i, y_{i+1}) := \beta_{v_1}[(y_i, y_{i+1 })^{(1)}].
\]

\item The bigon  $\beta^{(1)}(y_i, y_{i+1})$ defines a new top pair
$\topp[\beta^{(1)}(y_i, y_{i+1})]$, at the vertex $v_2$.
\end{enumerate}

The Steps 1--5 define, by induction, a unique infinite sequence
of vertices and bigons (see Figure~\ref{old-fig:1}):
\begin{equation}\label{old-eq:7}
\begin{split}
 & \operatorname{id}, v_1, v_2, \cdots\\
 & \beta(y_i, y_{i+1}), \beta^{(1)}(y_i,y_{i+1}),
      \beta^{(2)}(y_i, y_{i+1}) \cdots\ .
\end{split}
\end{equation}
Each bigon in the infinite sequence
$\set{\beta^{(k)}(y_i, y_{i+1})}{k \in \N}$
is a pair of geodesics
$\left\{\gamma^{(k)}_{l}, \gamma^{(k)}_{r}\right\}$ with $k \in \N$
connecting the vertices $v_{k}$ and $v_{k+1}$.

By definition, the terminal vertex $v_{k+1}$ of $\beta^{(k)}$ is the
initial vertex of the next bigon $\beta^{(k+1)}$ in the sequence.
Therefore a \emph{finite concatenation} of bigons $ \beta^{(0)}(y_i,
y_{i+1}) \beta^{(1)}(y_i, y_{i+1}) \cdots \beta^{(k)}(y_i, y_{i+1})$
makes sense. It is defined by the finite collection of paths:
\[
\set{  \gamma^{(0)}_{\epsilon(0)} \cdot
       \gamma^{(1)}_{\epsilon(1)} \cdots
       \gamma^{(k)}_{\epsilon(k)}}{
  \text{$\epsilon(j)\in\{l,r\},\ j \in \{0, 1, \dots, k\}$}} .
\]

We denote the infinite concatenation of all these paths as:
\[
\beta^{\infty}(y_i, y_{i+1}) := \lim_{k\to\infty}
\beta^{(0)}(y_i, y_{i+1}) \beta^{(1)}(y_i, y_{i+1}) \cdots
\beta^{(k)}(y_i, y_{i+1}).
\]

\begin{figure}[htbp]
\centerline{\includegraphics[height=65mm]{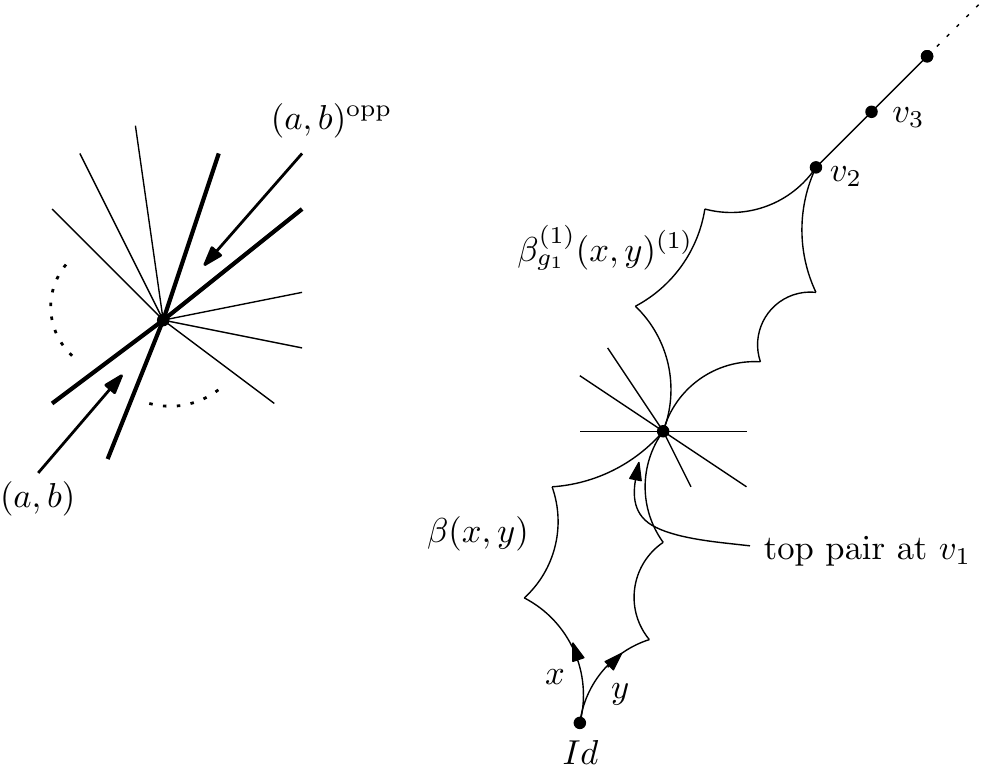} }
\caption{Opposite pair and bigon rays.}\label{old-fig:1}
\end{figure}

\begin{lemma}[Los \protect{\cite[Lemma~3.1]{Los}}]\label{old-lem:2.5}
With the above notation the following statements hold.
\begin{enumerate}
\item Each path in the collection:
$
\beta^{(0)}(y_i, y_{i+1}) \beta^{(1)}(y_i, y_{i+1}) \cdots
\beta^{(k)}(y_i, y_{i+1})
$
is a geodesic segment, for all $k\in \N$.

\item Two geodesic segments in (a) stay at a uniform distance from
each other for any $k \in \N$.
\end{enumerate}
\end{lemma}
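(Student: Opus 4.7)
The plan is to prove (a) by induction on $k$ and then deduce (b) as a soft corollary of the finiteness of bigon lengths under the $\Gamma$-action. The base case $k=0$ of (a) is immediate because each $\gamma^{(0)}_{\epsilon}$ is one side of the minimal bigon $\beta(y_i,y_{i+1})$ and so is a geodesic by Lemma~\ref{old-lem:2.4}. For the inductive step I would assume that every choice of $(\epsilon(0),\ldots,\epsilon(k-1))$ gives a geodesic segment $p_{k-1}$ from $\operatorname{id}$ to $v_k$, and prove that appending $\gamma^{(k)}_{\epsilon(k)}$ preserves geodesicity.

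The engine powering the inductive step is the opposite-pair construction of Notation~\ref{notationCicOrd}. The top pair $\topp[\beta^{(k-1)}(y_i,y_{i+1})]$ at $v_k$ records the two generators along which $p_{k-1}$ enters $v_k$, whereas $\beta^{(k)}(y_i,y_{i+1})$ leaves $v_k$ through the opposite pair, obtained by rotating $n$ positions in the cyclic order of Lemma~\ref{old-lem:2.2}(a). In the planar complex $\Cay^2(\Gamma,P)$ this means that the incoming and outgoing edges at $v_k$ lie diametrically opposite around $v_k$, so the concatenation passes through $v_k$ as straight as the local combinatorics allow. If $p_{k-1}\cdot\gamma^{(k)}_{\epsilon(k)}$ were not a geodesic, then a strictly shorter word $\sigma$ representing $v_{k+1}$ would, together with our concatenation, bound a planar disc $D$ in $\Cay^2(\Gamma,P)$ tiled by relator $2$-cells. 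The opposite-pair positioning at $v_k$ would force $\sigma$ either to split off from $p_{k-1}$ strictly before reaching $v_k$ or to enter $v_k$ through a non-opposite edge; peeling off the first relator $2$-cell of $D$ incident to the diverging vertex then exhibits a strictly smaller bigon which contradicts either the inductive hypothesis or the uniqueness and minimality of the minimal bigons granted by Lemma~\ref{old-lem:2.4}.

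Part (b) is then almost immediate. All paths in the collection of (a) pass through the same checkpoints $\operatorname{id}=v_0,v_1,\ldots,v_{k+1}$ and differ only in the choice of side inside each bigon $\beta^{(j)}(y_i,y_{i+1})$. By Lemma~\ref{old-lem:2.2}(b)--(c) there are only $2n$ adjacent pairs of generators at any vertex and, under the $\Gamma$-action, only finitely many shapes of minimal bigons, so there is a uniform bound $L:=\max_i \ell(\beta(y_i,y_{\sbmod{i+1}{2n}}))$ on the length of every $\beta^{(j)}$. Two paths in the family coincide at each $v_j$ and diverge by at most $L/2$ inside the intervening bigon, giving a Hausdorff bound $L/2$ that is independent of $k$ and of the $\epsilon$-choices. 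The main obstacle throughout is the planar-disc argument in (a): one must verify carefully that the opposite-pair geometry at each $v_k$ genuinely obstructs every possible shortcut, and that the disc-pruning step always produces a cleanly smaller bigon to which Lemma~\ref{old-lem:2.4} or the inductive hypothesis applies. Once this local combinatorial check is secured, the rest is bookkeeping.
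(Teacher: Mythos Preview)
The paper does not prove this lemma at all: it is quoted verbatim from \cite[Lemma~3.1]{Los} and used as a black box, so there is no ``paper's own proof'' to compare against. What you have written is therefore an independent attempt at the argument in \cite{Los}.

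Your treatment of part~(b) is correct and essentially complete: any two paths in the family share the vertices $v_0,v_1,\dots,v_{k+1}$ and differ only by the choice of side inside each minimal bigon, and for a minimal geometric presentation every minimal bigon has half-length exactly $n$, so the Hausdorff bound is uniform.

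For part~(a) your plan has the right shape but the step you yourself flag as the ``main obstacle'' is a genuine gap, not just bookkeeping. Saying that a shorter word $\sigma$ and the concatenation bound a disc $D$, and that ``peeling off the first relator $2$-cell \dots\ exhibits a strictly smaller bigon'', does not by itself produce a contradiction with Lemma~\ref{old-lem:2.4}: peeling a single cell from a van~Kampen diagram generally does not yield a bigon between geodesics, and the inductive hypothesis only controls paths that already pass through the prescribed vertices $v_j$, which $\sigma$ need not do. What actually makes the induction go through is a local-to-global principle for geodesics in these groups: the single relator has length $2n\ge 6$, so the presentation satisfies a small-cancellation condition (Dehn's algorithm applies), and a word is non-geodesic only if it contains more than half of a cyclic conjugate of the relator. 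The opposite-pair construction is precisely what prevents this at each junction $v_k$: the incoming edge and the outgoing edge at $v_k$ are separated by $n-1$ positions in the cyclic order, so no subword of the concatenation crossing $v_k$ can read more than $n$ consecutive letters of the relator. Making this precise (or, equivalently, invoking Greendlinger's lemma on the disc $D$) is the content that your sketch is missing.
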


In consequence, the infinite concatenation $\beta^{\infty}(y_i,
y_{i+1})$ defines infinitely many geodesic rays with a unique limit
point in $\partial\Gamma.$ It will be denoted by $(y_i,
y_{i+1})^{\infty}$.

\subsubsection{Cylinders, definition of the BSL map}
We define the \emph{cylinder} of length one as the subset of the
boundary:
\[
\cC_x := \set{\xi \in \partial\Gamma}{
  \text{there is a geodesic ray $\{\xi\}$ starting at
        $\operatorname{id}$ by $x \in X$}}.
\]

\begin{lemma}
Let $P=\present{X/R}$ be a geometric presentation of $\Gamma$. The
boundary $\partial\Gamma = \SI$ is covered by the cylinder sets
$\cC_x,\ x\in X$ and:
\begin{enumerate}
\item Two cylinders have non-empty intersection:
$\cC_x \bigcap \cC_y \neq \emptyset$ if and only if
$(x, y)$ is an adjacent pair of generators.

\item Each cylinder $\cC_x,\ x\in X$ is a non trivial
connected interval of $\partial\Gamma$.
\end{enumerate}
\end{lemma}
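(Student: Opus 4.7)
The plan is to establish the covering statement first, then prove (a) using the bigon-ray construction together with the planarity of $\Cay^2(\Gamma,P)$, and finally deduce (b) from (a) and the cyclic order transferred from the identity vertex to the boundary.

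For the covering, every $\xi\in\partial\Gamma$ is by definition the limit of some geodesic ray $\{\xi\}$ starting at the identity; the initial edge of $\{\xi\}$ carries a label $x\in X$, hence $\xi\in\cC_x$, and the family $\{\cC_x\}_{x\in X}$ covers $\partial\Gamma$.

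For the \emph{if} direction of (a), I would use the bigon-ray construction of Subsection~\ref{BigonRays}: if $(x,y)$ is an adjacent pair in the sense of Lemma~\ref{old-lem:2.2}(a), then Lemma~\ref{old-lem:2.5} guarantees that the concatenation $\beta^{\infty}(x,y)$ has a single limit point $(x,y)^{\infty}\in\partial\Gamma$. This point is the limit both of the all-left geodesic ray of $\beta^{\infty}(x,y)$, whose first edge is $x$, and of the all-right geodesic ray, whose first edge is $y$; hence $(x,y)^{\infty}\in\cC_x\cap\cC_y$.

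The converse direction is the main point. The idea is to use the planar embedding of $\Cay^2(\Gamma,P)$ to transfer the cyclic order of the generators at the identity to a compatible cyclic order on $\partial\Gamma\simeq\SI$. If $x\neq y$ are not adjacent, then in the cyclic order around the identity there is a generator $z$ strictly separating $x$ from $y$ on one of the two arcs and a generator $z'$ strictly separating them on the other. Any geodesic ray starting by $z$ (respectively by $z'$) is confined, by planarity, to a region of $\Cay^2(\Gamma,P)$ that separates every ray starting by $x$ from every ray starting by $y$; passing to limits at infinity yields $\cC_x\cap\cC_y=\emptyset$. The hard part will be making this separation argument precise — that is, turning the planar embedding of $\Cay^2(\Gamma,P)$ into an actual cyclic order on $\partial\Gamma$ compatible with the order of generators around the identity.

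Finally, for (b), the non-triviality of $\cC_{y_i}$ is immediate from the \emph{if} part of (a), which provides the two distinct points $(y_{\sbmod{i-1}{2n}},y_i)^{\infty}$ and $(y_i,y_{\sbmod{i+1}{2n}})^{\infty}$ in $\cC_{y_i}$. For connectedness, the cyclic-order argument of (a) shows that the $2n$ cylinders $\cC_{y_j}$ are arranged on $\SI$ in the cyclic order of their labels; therefore $\cC_{y_i}$ coincides with the closed subarc bounded by $(y_{\sbmod{i-1}{2n}},y_i)^{\infty}$ and $(y_i,y_{\sbmod{i+1}{2n}})^{\infty}$, and is in particular a non-trivial connected interval of $\partial\Gamma$.
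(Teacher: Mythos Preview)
The paper does not prove this lemma: immediately after the statement it writes ``This lemma is proved in \cite[Lemmas~2.13 and 2.14]{Los}.'' So there is no in-paper argument to compare your attempt against, and your sketch already goes further than what the authors supply here.

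That said, as a standalone argument your outline has gaps. The covering and the ``if'' half of (a) are fine. The ``only if'' direction, which you correctly flag as the crux, is not yet an argument: a single geodesic ray from the identity does not separate the plane $\Cay^2(\Gamma,P)$, so the assertion that rays through $z$ and $z'$ confine the $x$-rays and $y$-rays to disjoint regions needs an explicit separating object --- e.g.\ a Jordan curve in the compactified disc $\Cay^2(\Gamma,P)\cup\partial\Gamma$ built from two bigon-rays and a boundary arc --- or else a reduction to Lemma~\ref{old-lem:2.4} by extracting an honest bigon from two asymptotic rays. In (b), your claim that $\cC_{y_i}$ \emph{coincides} with the arc $I_{y_i}=[(y_{\sbmod{i-1}{2n}},y_i)^\infty,(y_i,y_{\sbmod{i+1}{2n}})^\infty]$ is too strong: the paper only asserts $I_{y_i}\subset\cC_{y_i}$ (Definition~\ref{BSLM}), and since adjacent cylinders overlap on non-degenerate arcs, $\cC_{y_i}$ is in general strictly larger than $I_{y_i}$. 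Connectedness therefore still requires the planar separation argument you defer, not merely the statement of (a).
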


This lemma is proved in \cite[Lemmas~2.13 and 2.14]{Los}.
Observe that the point $(y_i, y_{i+1})^{\infty}$ of
Lemma~\ref{old-lem:2.5}
belongs, by definition, to the intersection
$\cC_{y_i} \bigcap \cC_{y_{i+1}}$.

In what follows we consider the points in the circle ordered
\emph{clockwise}.\label{clockwise}
That is, if $r,s,t$ are pairwise different points of $\SI$ we will
write $r < s < t$ if $s$ belongs to the clockwise arc starting at
$r$ and ending at $t$.
The notation $r \le s \le t$ will also be used in the natural way.
Then the interval $[r,t]$ is defined as the set
$\set{s \in \SI}{r \le s \le t}.$
Also, if $I, J, K$ are closed connected subsets of $\SI$
with pairwise disjoint interiors we will write $I < J < K$ whenever
$r \le s \le t$ for every $r\in I,\ s\in J$ and $t \in K$.

\begin{definition}\label{BSLM}
If $P = \present{X/R}$ is a geometric presentation of a hyperbolic
surface group $\Gamma$, then we denote by $I_{y_i}$ the interval
$[(y_{i-1}, y_{i})^{\infty}, (y_{i}, y_{i+1} )^{\infty}].$ Clearly
$I_{y_i}$ is a subset of $ \cC_{y_i}$ for every $y_i \in X$.

We define the Bowen-Series-Like map {\map{\Phi_P}{\partial\Gamma}} by
\[
\Phi_P(\xi ) = x^{-1}(\xi) \quad \text{if $\xi \in I_x$,}
\]
where $ x^{-1} (\xi)$ is the action, by homeomorphism, on
$\partial\Gamma$ by the group element $x^{-1}$.

The map $\Phi_P$ satisfies the following elementary properties:
\begin{enumerate}[(i)]
\item It depends explicitly on the presentation $P$ (the exact
dependence will be explained below).

\item Since $I_{x} \subset \cC_{x},$ each $\xi \in I_{x}$ has a
writing, as a limit of a ray, as
$\{\xi\} = x \cdot \omega.$
The image under $\Phi_P$ is given by:
\[
 \{\Phi_P(\xi)\} = \{ x^{-1}(x \cdot \omega) \} = \{\omega\}.
\]
In other words, the map $\Phi_P$ is a shift map, on this particular
writing as a ray.
\end{enumerate}
\end{definition}

\subsection{Markov partition for minimal geometric presentations}\strut

Theorem~\ref{old-theo:2.3} states that the map $\Phi_P$ admits a
Markov partition. In this subsection we will define a particular
presentation, which will be called \emph{symmetric}, and we will
make the Markov partition explicit for this presentation.

The first step is to define subdivision points in each interval
$I_x$, $x\in X$. Let us recall that the extreme points $(y,
x)^{\infty}$ and $(x, z)^{\infty}$ of the intervals $I_x$ are limit
points of bigon rays $\beta^{\infty}(y, x)$ and $\beta^{\infty}(x,
z)$. Let us focus on $(y, x)^{\infty}$. Let
$\beta_{v}^{\infty}(y, x)$ be the bigon ray starting at the vertex $v
\in \Cay^1(\Gamma,P)$. Observe that with this definition we can
write:

\begin{equation}\label{old-eq:8}
\beta^{\infty}(y, x) = \beta(y, x) \cdot
\beta_{v_1}^{\infty}[(y, x)^{(1)}],
\end{equation}
with the notations of Subsection~\ref{BigonRays}.

The particular property of a minimal geometric presentation that is
useful here is that there is only one relation $R$ of even length
$2n$, when $\Gamma$ is a surface group of rank $n$. In this case,
any bigon $\beta(y, x)$ has the form $\{\gamma_l, \gamma_r\}$ with
$\gamma_l \cdot (\gamma_r)^{-1}$ being one of the words representing
the relation $R$, up to cyclic permutation and inversion. This word
starts with the letter $y$ and terminates with the letter $x^{-1}$.

Since the relation $R$ has length $2n$, let us write the two paths
$\{\gamma_l, \gamma_r\}$ as:
\begin{equation}\label{old-eq:9}
\{ y \cdot x'_{i_2} \cdots x'_{i_n},
   x \cdot x_{i_2} \cdots x_{i_n}
\}.
\end{equation}
We focus on the ``$x$'' side of
Equations~\eqref{old-eq:8},\eqref{old-eq:9}, i.e. on the infinite
collection of rays:
\begin{equation}\label{old-eq:10}
x \cdot x_{i_2} \cdots x_{i_n} \cdot \beta^{(\infty)}_{v}[(y, x)^{(
1)}],
\end{equation}
where $v$ is the group element written: $v = x \cdot x_{i_2} \cdots
x_{i_n}$. The vertices $v^1 = x$ and $v^{j} =  x \cdot x_{i_2}
\cdots x_{i_j},$ for $j=2,3,\dots, n-1$ of $\Cay^1(\Gamma,P)$ belong
to $\gamma_r$ and are ordered along $\gamma_r$ (this notation is
consistent with $v = v_n$ ).

The following pairs of consecutive edges:
\begin{equation}\label{old-eq:12}
\left\{
   (\overline{x}, x_{i_2}),
   (\overline{x_{i_2}}, x_{i_3}), \dots,
   (\overline{x_{i_{n-1}}}, x_{i_n})
\right\}
\end{equation}
at the vertices $\left\{v^1, \dots, v^{n-1}\right\}$, are
\emph{crossed} by the path $\gamma_r$, where the notation
$\overline{x_{i_j}}$ means the edge  $x_{i_j}$ with the opposite
orientation. Observe that each pair of consecutive letters along the
paths $\gamma_r$ are adjacent generators.

\begin{lemma}\label{old-lem:2.8}
If the relation defining $\beta(y, x)$ has even
length $2n$ then the collection:
\begin{equation}\label{old-eq:13}
\mathcal{R}_L^{x} := \set{ x \cdot x_{i_2} \cdots x_{i_j} \cdot
\beta^{(\infty)}_{v^{j}}
[\opp{(\overline{x_{i_{j}}},x_{i_{j+1}})}]}{ j = 1, \dots, n-1},
\end{equation}
(see Figure~\ref{old-fig:2}) is called the \emph{left} (with respect
to $x$) \emph{subdivision rays}. They satisfy the following
properties:
\begin{enumerate}
\item Each path in the infinite collection $\mathcal{R}_L^{x}$ is a
ray starting at the identity.

\item For a given $j \in \{1, 2, \dots, n-1\}$, all the rays in
\[ \mathcal{R}_L^{(x,j)} =
  x \cdot x_{i_2} \cdots x_{i_j} \cdot
  \beta^{(\infty)}_{v^{j}} [\opp{(\overline{x_{i_{j}}}, x_{i_{j+1}})}]
\]
converge to the same point $\lambda_{x}^{j} \in \partial\Gamma$.

\item For any $j \neq p$, the rays in $\mathcal{R}_L^{(x,j)}$ and
in $\mathcal{R}_L^{(x,p)}$ have a common beginning:
$x \cdot x_{i_2} \cdots x_{i_{\nu}}$
where $\nu:= \min\{j, p\}$ and are otherwise disjoint.

\item Each $\lambda_{x}^{j},\ j \in \{1,2, \dots, n-1\}$ belongs to
the interior of the interval $I_{x}$ of Definition~\ref{BSLM}.

\item The limit points $\lambda_{x}^{j}$ are inversely ordered with
respect to the index $j \in \{1,2, \dots, n-1\}$ along
$\partial\Gamma$ (that is, $
 \lambda_x^{n -1} < \lambda_x^{n -2} < \dots <
 \lambda_x^{2} <  \lambda_x^{1}
$%
).
\end{enumerate}
\end{lemma}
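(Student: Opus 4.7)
The plan is to establish claims (a)--(e) sequentially, leveraging Lemma~\ref{old-lem:2.5} for the geodesic and convergence behaviour of bigon rays, together with the planarity of $\Cay^2(\Gamma,P)$ and the cyclic ordering at vertices provided by Lemma~\ref{old-lem:2.2}(a). The general picture is that each ray in $\mathcal{R}_L^{(x,j)}$ consists of a finite prefix along the geodesic $\gamma_r$ up to the vertex $v^j$, followed by an infinite bigon ray launched at $v^j$ in the direction \emph{antipodal} (in the cyclic order at $v^j$) to the pair $(\overline{x_{i_j}},x_{i_{j+1}})$ through which $\gamma_r$ passes; every claim can then be traced to a compatibility between the prefix and this antipodal continuation.

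For (a), the prefix $x\cdot x_{i_2}\cdots x_{i_j}$ is a subsegment of the geodesic $\gamma_r$ and hence geodesic. By Lemma~\ref{old-lem:2.5}(a), each finite truncation of $\beta^{(\infty)}_{v^j}[\opp{(\overline{x_{i_j}},x_{i_{j+1}})}]$ is a geodesic from $v^j$. At $v^j$ the pair $\opp{(\overline{x_{i_j}},x_{i_{j+1}})}$ is diametrically opposite, in the cyclic order, to the pair where $\gamma_r$ crosses, so the continuation leaves $v^j$ on the side away from $x_{i_j}$; this rules out back-tracking at the junction and the concatenation remains geodesic. For (b), Lemma~\ref{old-lem:2.5}(b) gives uniform Hausdorff-bounded equivalence of the rays in the bigon family at $v^j$; translating by the fixed group element $x\cdot x_{i_2}\cdots x_{i_j}$ preserves this bound, so the whole family $\mathcal{R}_L^{(x,j)}$ converges to a single point $\lambda_x^j\in\partial\Gamma$. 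Claim (c) is then almost tautological from the construction: for $j<p$, the common prefix is precisely $x\cdot x_{i_2}\cdots x_{i_{\nu}}$ with $\nu=j$, after which the rays in $\mathcal{R}_L^{(x,j)}$ branch off at $v^j$ into the half-plane opposite $\gamma_r$, whereas those in $\mathcal{R}_L^{(x,p)}$ remain on $\gamma_r$; planarity of $\Cay^2(\Gamma,P)$ prevents the two families from meeting again.

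For (d) and (e) the argument is geometric and uses planarity decisively. The interval $I_x$ is bounded by $(y,x)^{\infty}$ and $(x,z)^{\infty}$, the two limit points obtained from bigon rays rooted at the identity and leaving through the two faces of $\Cay^2(\Gamma,P)$ adjacent to the oriented edge labelled $x$; the rays defining each $\lambda_x^j$ share the initial edge $x$ with both of these, so $\lambda_x^j\in\overline{I_x}$, and the antipodal branching at $v^j$ forces the limit strictly into the interior of $I_x$, proving (d). For (e), as $j$ increases, the base vertex $v^j$ moves further along $\gamma_r$ into the interior of the disc bounded by the bigon, and the opposite pair at $v^j$ aims into a planar region that is strictly nested inside the one associated with $v^{j-1}$; reading the cyclic order at each vertex consistently with the global orientation fixed in Notation~\ref{notationCicOrd} yields the inverse monotonicity $\lambda_x^{n-1}<\lambda_x^{n-2}<\cdots<\lambda_x^{1}$.

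The main obstacle I foresee is (e): producing a clean bookkeeping of the orientations of the cyclic orderings at the successive vertices $v^j$ that are linked by the shared faces of $\Cay^2(\Gamma,P)$. Once one shows that the "planar side" from which each opposite-pair bigon ray escapes is nested inside the previous one, the monotone order on $\partial\Gamma$ follows, but verifying this nesting rigorously requires carefully translating the local combinatorial data at each $v^j$ into a statement about the global planar embedding.
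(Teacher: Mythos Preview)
The paper does not prove this lemma at all: immediately after the statement it simply says ``This lemma is proved in \cite[Lemma~4.1]{Los}.'' So there is no in-paper argument to compare against; your proposal is an attempt to reconstruct a proof that the authors deliberately outsourced.

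Your geometric picture is the right one and parts (b), (c) follow essentially as you say from Lemma~\ref{old-lem:2.5} and the construction. However, your argument for (a) has a real gap. You write that at $v^j$ the continuation leaves in the direction opposite to where $\gamma_r$ enters, and that ``this rules out back-tracking at the junction and the concatenation remains geodesic.'' Absence of back-tracking is not sufficient: in a general hyperbolic group the concatenation of two geodesics meeting without cancellation need not be geodesic. What makes it work here is precisely the planarity of $\Cay^2(\Gamma,P)$ together with the fact that the prefix lies on one side of the bigon ray issued from $v^j$; any putative shortcut from the identity to a far vertex on the continuation would have to cross one of the two sides of a bigon, contradicting minimality of bigons (Lemma~\ref{old-lem:2.4}). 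You invoke planarity later for (c)--(e), but for (a) it has to do the work too, and ``no back-tracking'' hides this. The actual proof in \cite{Los} uses exactly this kind of planar separation argument.

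For (d) and (e), your outline is again in the right spirit, but as you yourself note, the nesting claim in (e) requires tracking how the cyclic order at successive vertices $v^j$ is compatible with the global planar orientation; this is the content of the careful case analysis in \cite[Lemma~4.1]{Los}, and your paragraph does not yet supply it. So the proposal is a reasonable plan but not a proof: items (a) and (e) both need the planarity\,/\,bigon-minimality input spelled out rather than gestured at.
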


This lemma is proved in \cite[Lemma~4.1]{Los}.

\begin{figure}[htbp]
\centerline{\includegraphics[height=65mm]{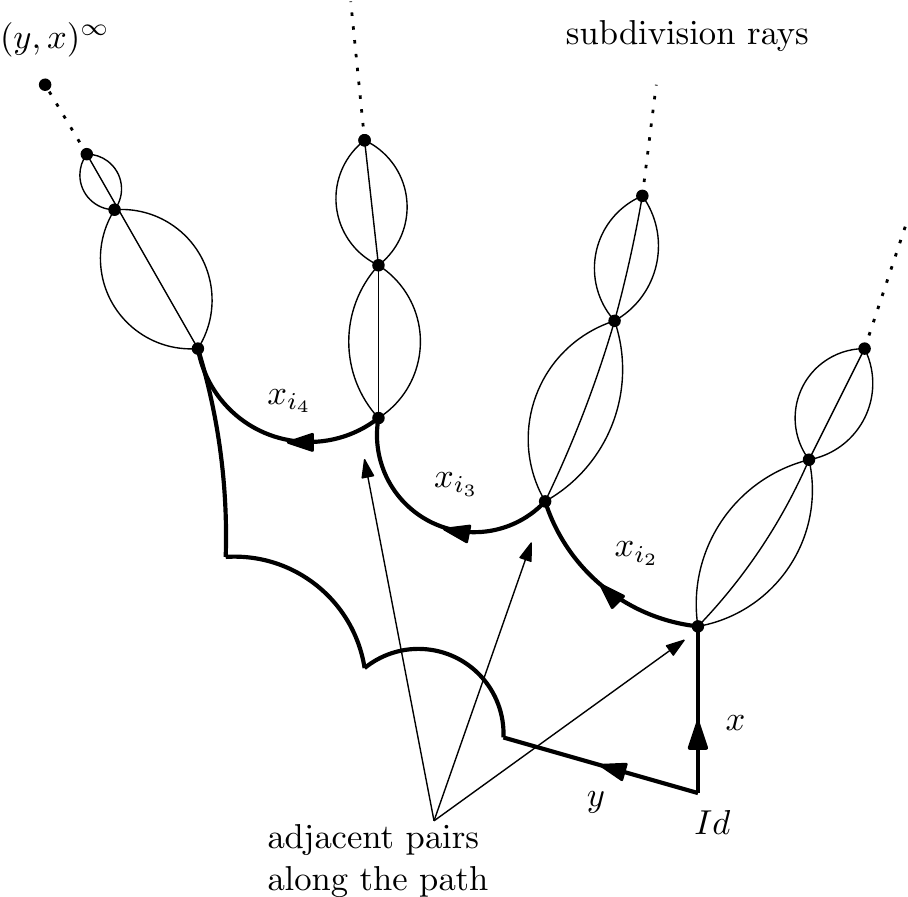}}
\caption{Subdivision rays.}\label{old-fig:2}
\end{figure}

We denote $\mathcal{L}_{x} = \{\lambda_{x}^{1}, \dots,
\lambda_{x}^{n-1}\}$ this set of \emph{left} (with respect to $x$)
limit points. By the same analysis the adjacent pair $(x, z)$
defines the set of \emph{right} (with respect to $x$) limit points
$\mathcal{R}_{x} = \{\rho_{x}^{1}, \dots, \rho_{x}^{n-1}\},$ which
are ordered with respect to the superindex. Observe that we use here
the fact that a minimal geometric presentation has only one relation
(of length $2n$). Consider now the set of all such points:
\begin{equation}\label{old-eq:14}
\cS = \bigcup_{x\in X} \left(
  \mathcal{R}_{x} \cup \mathcal{L}_{x} \cup \partial I_x
\right),
\end{equation}
called the \emph{subdivision points}.

\begin{lemma}\label{old-lem:2.9}
If $P$ is a geometric presentation of a hyperbolic surface group
$\Gamma$ so that all relations have even length, then the set of
subdivision points $\cS$ is invariant under the map $\Phi_P$ of
Definition~\ref{BSLM} and defines a finite Markov partition of
$\partial\Gamma$.
\end{lemma}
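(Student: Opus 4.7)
The plan is to prove the two conclusions — $\Phi_P$-invariance of $\cS$ and the Markov property of the induced partition — and derive the second from the first. Finiteness of $\cS$ is immediate from \eqref{old-eq:14}, since each of the $2n$ intervals $I_x$ contributes just two endpoints and $2(n-1)$ interior subdivision points. For the invariance I would rely on property~(ii) of Definition~\ref{BSLM}: on $I_x$ the map $\Phi_P$ acts as a symbolic shift that strips the leading letter $x$ from any ray representative. Every point of $\cS\cap I_x$ carries a canonical such representative — a pure bigon ray $\beta^{\infty}$ from Subsection~\ref{BigonRays} for an endpoint $(y,x)^{\infty}$, or a left/right subdivision ray from \eqref{old-eq:13} for $\lambda_x^j$ (resp.\ $\rho_x^j$) — and the strategy is to strip the leading $x$ in each case and identify the resulting tail as a canonical ray at the identity.

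The case analysis I have in mind proceeds as follows. For $\lambda_x^j$ with $j\ge 2$, stripping $x$ from \eqref{old-eq:13} and translating by $x^{-1}$, together with the $\Gamma$-equivariance of the cyclic ordering of Lemma~\ref{old-lem:2.2}(a), produces a subdivision ray of the form \eqref{old-eq:13} based at the identity with index $j-1$, whose limit is therefore a subdivision point in $\cS$. For $\lambda_x^1$ the shifted ray is the pure bigon ray $\beta^{\infty}_{\operatorname{id}}[\opp{(\overline{x},x_{i_2})}]$, whose limit has the form $(a,b)^{\infty}$ and is thus an endpoint of some $I_a$. For an endpoint $(y,x)^{\infty}$ viewed inside $I_x$, the representative \eqref{old-eq:10} carries $n-1$ letters of the relation after the leading $x$, so the shift yields a subdivision ray of maximal index $n-1$ at the identity and its limit is again in $\cS$. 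The right subdivision points $\rho_x^j$ are handled by the symmetric argument.

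Granted invariance, the Markov property is a short consequence. The restriction $\Phi_P|_{I_x}$ is the restriction to a closed arc of the self-homeomorphism $x^{-1}$ of $\partial\Gamma\simeq\SI$, so it is a monotone homeomorphism onto the closed arc $\Phi_P(I_x)$. By invariance the endpoints of $\Phi_P(I_x)$ lie in $\cS$, and the $2(n-1)$ interior subdivision points of $I_x$ are mapped to points of $\cS$ lying in the interior of $\Phi_P(I_x)$. Hence the image under $\Phi_P$ of each basic sub-interval of $I_x$ is a closed arc of $\partial\Gamma$ bounded by two consecutive elements of $\Phi_P(\cS\cap I_x)\subseteq\cS$; every such arc is automatically a union of $\cS$-basic intervals, which is the Markov condition.

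The main obstacle I anticipate is the bookkeeping in the case analysis for invariance: after stripping the leading letter one must recognize the tail precisely as a specific canonical ray of type \eqref{old-eq:13} at the identity. This identification rests essentially on the minimal geometric hypothesis, which forces a single relation of length $2n$, so that every cyclic permutation (and inversion) of this relation is again realized by a bigon in $\Cay^1(\Gamma,P)$. Combined with the $\Gamma$-equivariance of the cyclic ordering and the opposite-pair involution $\opp{(\cdot)}$, this is what makes the shifted tail fit back into the template \eqref{old-eq:13}; carrying out this verification for every admissible initial triple $(y_{i-1},y_i,y_{i+1})$ is the delicate step in the argument.
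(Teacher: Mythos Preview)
The paper does not actually prove this lemma: immediately after the statement it simply records that ``this statement is a particular case of \cite[Theorem~4.3]{Los}.'' So there is no in-paper proof to compare against; what the paper does supply is the image computation \eqref{old-eq:18}, which is precisely the case analysis you outline (shift off the leading letter and recognise the tail as a subdivision ray of lower index, or a pure bigon ray when $j=1$). In that sense your strategy matches the intended one.

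One point to tighten: your finiteness count and your closing remark both invoke the \emph{minimal} hypothesis (a single relation of length $2n$), whereas Lemma~\ref{old-lem:2.9} is stated for any geometric presentation with all relations of even length. Your argument still goes through in that generality --- Lemma~\ref{old-lem:2.2}(c) guarantees that the stripped word $x_{i_2}\cdots x_{i_j}$ is again the right half of the unique bigon attached to the adjacent pair $(\overline{x},x_{i_2})$, regardless of how many relations there are --- but the ``$2(n-1)$ interior points per interval'' count and the phrase ``forces a single relation of length $2n$'' should be dropped or rephrased if you want the write-up to match the stated hypotheses.
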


This statement is a particular case of \cite[Theorem~4.3]{Los}. For
a minimal geometric presentation there is only one relation of
length $2n$ for a surface group of rank $n$. In this case the
partition of each interval $I_x$ above is given by the points
$\mathcal{R}_{x} \cup \mathcal{L}_{x} \cup \partial I_x$ which are
ordered in the following way:
\[
  \lambda_x^{n} := (y, x)^{\infty} < \lambda_x^{n-1} < \dots <
   \lambda_x^{2} < \rho_x^{1} < \lambda_x^{1} < \rho_x^{2} < \dots <
   \rho_x^{n} := (x, z)^{\infty}\ .
\]
We also observe here that the intervals $I_x$ are ordered, along
$\SI$, by the (cyclic) ordering of the generators at the identity.
Then, we can define a partition of each of the intervals $I_x$
consisting on the following subintervals:
\begin{equation}\label{eq:PartInt}
\begin{split}
& L_x^j = \left[\lambda_x^{j}, \lambda_x^{j-1}\right]
  \text{ and } R_x^j = \left[\rho_x^{j-1}, \rho_x^{j}\right],
  \text{ for $j \in \{3, 4, \dots, n\}$,}\\
& C_x^L = \left[\lambda_x^{2}, \rho_x^{1}\right]
  \text{ and } C_x^R = \left[\lambda_x^{1}, \rho_x^{2}\right], \text{ and}\\
& C_x = \left[\rho_x^{1}, \lambda_x^{1}\right].
\end{split}
\end{equation}

Recall that a subdivision point (left or right) has the following
writing:
$
\{\lambda_x^j\} =
  x \cdot x_{i_2} \cdots x_{i_j} \cdot
  \beta_{v^j}^{\infty}[\opp{(\overline{x_{i_{j}}}, x_{i_{j+1}})}],
$ for $j \in \{1,2, \dots, n\}$.

Since the map $\Phi_P$ acts, on each interval $I_x$, on the ray
writing as a shift map we obtain:
\begin{equation}\label{old-eq:18}
\begin{split}
\{\Phi_P(\lambda_x^1)\} &=
  \beta^{\infty}[\opp{(\overline{x}, x_{i_2})}], \text{and}\\
\{\Phi_P(\lambda_x^j)\} &=
  x_{i_2} \cdots x_{i_j} \cdot \beta_{x_{i_2} \cdots
  x_{i_j}}^{\infty} [\opp{(\overline{x_{i_{j}}}, x_{i_{j+1}})}]
  \text{ for $j \in \{2, 3, \dots, n\}$}
\end{split}
\end{equation}
and there is a similar writing for the points $\rho_x^j$.

\begin{lemma}\label{old-lem:2.10}
If $P$ is a geometric presentation of a surface group with all
relations of even length then the image of the \emph{central}
interval $C_x = [\rho_x^{1}, \lambda_x^{1}]$ under $\Phi_P$ is a
single interval $I_u,\ u\in X$, where $u$ is the generator that is
opposite to $x^{-1}$ for the cyclic ordering of
Lemma~\ref{old-lem:2.2}(a) at the vertex $x$.
\end{lemma}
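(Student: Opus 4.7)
My plan is to verify that $\Phi_P$ sends the two endpoints of $C_x$ onto the two endpoints of a single interval $I_u$, and to then conclude by continuity and orientation-preservation of $\Phi_P|_{I_x}$ that $\Phi_P(C_x) = I_u$. The main tools will be the explicit ray-writing of the subdivision points from Lemma~\ref{old-lem:2.8} specialised to $j = 1$, and the shift property from Definition~\ref{BSLM}(ii).

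First I would write
\[
\{\lambda_x^1\} = x \cdot \beta^\infty_{v^1}\bigl[\opp{(\overline{x}, x_{i_2})}\bigr] \quad \text{and} \quad \{\rho_x^1\} = x \cdot \beta^\infty_{v^1}\bigl[\opp{(\overline{x}, x'_{i_2})}\bigr],
\]
where $v^1 = x$ and $(\overline{x}, x_{i_2})$, $(\overline{x}, x'_{i_2})$ are the adjacent pairs at $v^1$ coming respectively from the left-side bigon $\beta(y,x)$ and the right-side bigon $\beta(x,z)$. Applying Definition~\ref{BSLM}(ii) (the fact that $\Phi_P$ on $I_x$ acts as the shift by $x^{-1}$) then yields
\[
\{\Phi_P(\lambda_x^1)\} = \beta^\infty_{\operatorname{id}}\bigl[\opp{(\overline{x}, x_{i_2})}\bigr] \quad \text{and} \quad \{\Phi_P(\rho_x^1)\} = \beta^\infty_{\operatorname{id}}\bigl[\opp{(\overline{x}, x'_{i_2})}\bigr],
\]
where the adjacent pairs are now read at the identity using the $\Gamma$-equivariance of the cyclic ordering (Lemma~\ref{old-lem:2.2}(a)).

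Next, setting $y_k := x^{-1}$ in the labelling of Notation~\ref{notationCicOrd}, I would observe that the two bigons $\beta(y,x)$ and $\beta(x,z)$ sit on the two sides of the edge $(\operatorname{id}, v^1)$, so the two pairs $(\overline{x}, x_{i_2})$ and $(\overline{x}, x'_{i_2})$ at $v^1$ sit on the two sides of $\overline{x}$ in the cyclic ordering, reading $(y_k, y_{k-1})$ and $(y_k, y_{k+1})$ in some order. Taking opposites in the sense of Notation~\ref{notationCicOrd} gives $(y_{k+n}, y_{k+n-1})$ and $(y_{k+n}, y_{k+n+1})$, and the bigon-ray construction of Subsection~\ref{BigonRays} then identifies the two resulting images under $\Phi_P$ as the points $(y_{k+n-1}, y_{k+n})^\infty$ and $(y_{k+n}, y_{k+n+1})^\infty$ of $\SI$. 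By Definition~\ref{BSLM} these are precisely the two endpoints of $I_{y_{k+n}}$, and $u := y_{k+n}$ is exactly the generator diametrically opposite to $x^{-1} = y_k$ in the cyclic ordering at vertex $x$. Since the $\Gamma$-action on $\SI$ preserves the cyclic ordering by Lemma~\ref{old-lem:2.2}(a), the restriction $\Phi_P|_{I_x}$ is orientation-preserving, and hence the connected arc $\Phi_P(C_x)$ with endpoints in $\partial I_u$ must be $I_u$ itself rather than its complement in $\SI$.

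The hard part will be the combinatorial bookkeeping in the previous paragraph: correctly matching each of the two bigons $\beta(y,x)$ and $\beta(x,z)$ to its side of $\overline{x}$ at the vertex $v^1$ under the planar orientation of $\Cay^2(\Gamma,P)$, and verifying that $\Phi_P|_{I_x}$ preserves the cyclic order on $\SI$ so that $\Phi_P(C_x)$ is the short arc $I_u$ rather than its complementary long arc.
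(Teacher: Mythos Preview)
Your approach is essentially the paper's: compute $\Phi_P(\lambda_x^1)$ and $\Phi_P(\rho_x^1)$ via the shift formula, and observe that the two opposite pairs $\opp{(\overline{x},x_{i_2})}$ and $\opp{(x'_{i_2},\overline{x})}$ are themselves adjacent and share the single edge $u$ opposite to $x^{-1}$. The paper's proof is two sentences plus a reference to Figure~\ref{old-fig:CI}; you spell out the same combinatorics in more detail.

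One point to tighten in your final step. You justify that $\Phi_P|_{I_x}$ is orientation-preserving by invoking Lemma~\ref{old-lem:2.2}(a), but that lemma concerns only the cyclic ordering of the \emph{generators} at each vertex, not the orientation of the $\Gamma$-action on $\partial\Gamma$. In fact, for non-orientable surfaces (which the hypotheses of this lemma do not exclude) some generators act orientation-reversingly on $\SI$, as the paper itself exploits in Section~\ref{sec:disoriented}. The paper's own proof does not address the ``short arc versus complementary arc'' issue either, leaving it to the figure. A cleaner way to settle it, independent of orientation, is to note that for $\xi$ in the interior of $C_x$ the ray $\{\xi\}=x\cdot\omega$ must leave the vertex $x$ through an edge strictly between the two opposite pairs, i.e.\ through $u$ itself; hence $\Phi_P(\xi)\in\cC_u$, and the image arc, having both endpoints in $\partial I_u$ and interior in $\cC_u$, must be $I_u$.
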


\begin{proof} By \eqref{old-eq:18} we observe that
$
\{\Phi_P(\lambda_x^1)\} =
  \beta^{\infty}[ \opp{(\overline{x}, x_{i_2})}]
$
and similarly
$
\{\Phi_P(\rho_x^1)\} =
  \beta^{\infty}[\opp{(x'_{i_2}, \overline{x})}].
$
Since the two adjacent pairs $(\overline{x}, x_{i_2})$ and $(x'_{i_2},
\overline{x})$ are adjacent at the vertex $ v_1 = x$ then the two opposite
pairs $\opp{(\overline{x}, x_{i_2})}$ and
$\opp{(x'_{i_2},\overline{x})}$ are also adjacent. That means that they share one edge
$u$. This edge is just the one that is opposite to $x^{-1}$ at the
vertex $x$ (see Figure~\ref{old-fig:CI}).
\end{proof}

\begin{figure}[htbp]
\centerline{\includegraphics[height=60mm]{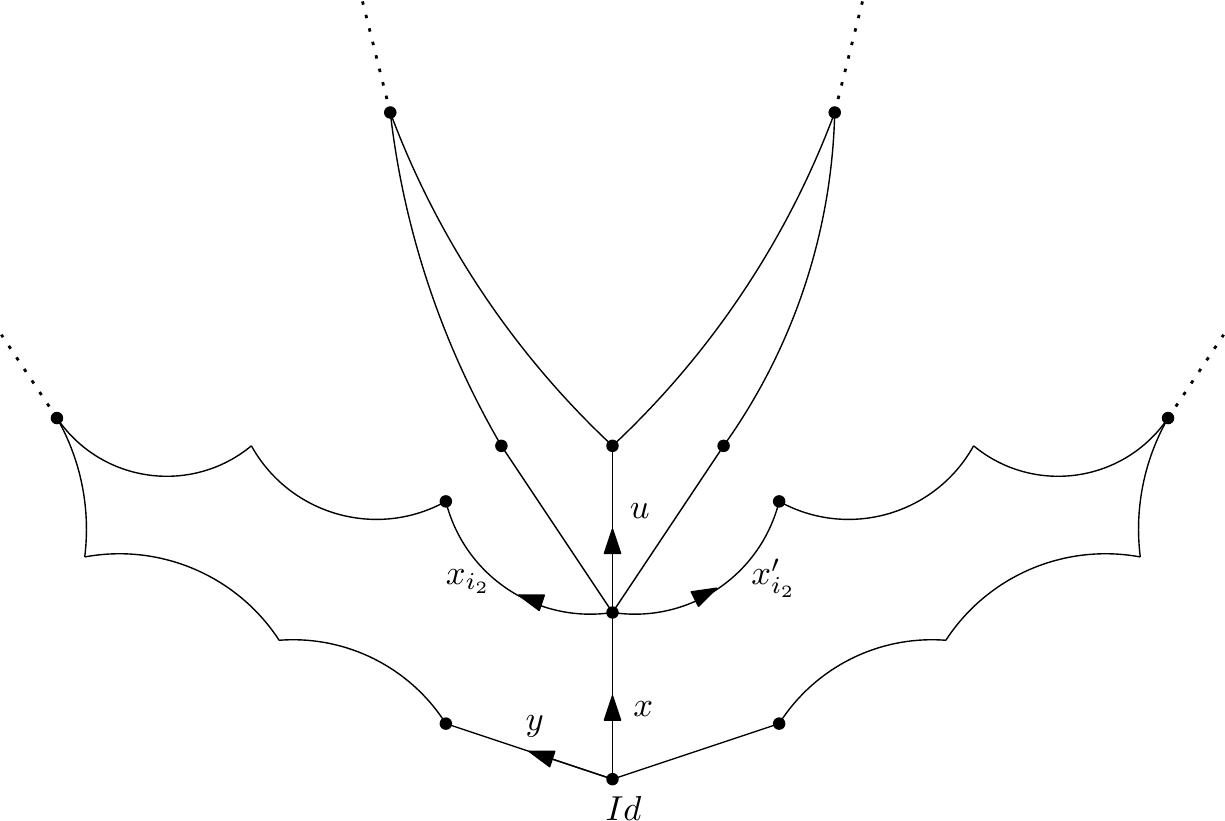} }
\caption{Central Interval.}\label{old-fig:CI}
\end{figure}

Next we define a particular presentation, which we call
\emph{symmetric}, for the rank $n$ group $\pi_1(S)$, where $S$ is an
orientable surface. Recall that $n=2g$, where $g$ is the genus of
$S$.

\begin{definition}\label{SimPres}
Given a surface group $\pi_1(S_g)$ of rank $n=2g$,
where $S_g$ is orientable of genus $g,$ the presentation
\[
\present{x_1^{\pm 1}, x_2^{\pm 1}, \dots, x_{n}^{\pm 1} \left/
        x_1 x_2 \cdots x_{n}
        x_1^{-1} x_2^{-1} \cdots x_{n}^{-1} \right.}
\]

will be called \emph{symmetric} and denoted by $P_n^{^+}.$
\end{definition}

\begin{proposition}\label{old-prop:2.1}
The symmetric presentation $P_n^{^+}$ is minimal and geometric.
\end{proposition}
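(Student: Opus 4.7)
The proposition contains three claims: (i) that $P_n^{^+}$ genuinely presents $\pi_1(S_g)$ when $n=2g$, (ii) that the generating set has minimal size, and (iii) that the Cayley $2$-complex $\Cay^2(\pi_1(S_g),P_n^{^+})$ is planar. The plan is to address them in that order.

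For (i) I would read the single relator $R=x_1x_2\cdots x_n\,x_1^{-1}x_2^{-1}\cdots x_n^{-1}$ as the boundary word of a $2n$-gon $\Pi$ and consider the closed $2$-manifold $M=\Pi/\!\sim$ obtained from the paired edge identifications prescribed by $R$. Then $\pi_1(M)$ is presented by $P_n^{^+}$, so the task reduces to identifying $M$ with $S_g$. Orientability of $M$ is immediate, since each generator $x_i$ occurs once with exponent $+1$ and once with $-1$, forcing a consistent choice of polygon orientation across every identification. To identify $M$ it then suffices to compute $\chi(M)$. The induced cell structure has one face and $n$ edges (the $2n$ sides of $\Pi$ glue in $n$ pairs), so the only nontrivial count is the number of vertex classes. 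Labelling the vertices of $\Pi$ cyclically as $v_0,\dots,v_{2n-1}$, the identifications translate into the relations $v_j\sim v_{j+n+1}$ for $j=0,\dots,n-1$ and $v_j\sim v_{j+n-1}$ for $j=1,\dots,n$ (indices mod $2n$). Composing a forward shift by $n+1$ with a reverse shift by $n-1$ yields a shift by $+2$ at every interior position, and combining this with a single shift by the odd number $n+1$ forces $\mathbb{Z}/2n\mathbb{Z}$ to be a single orbit. Hence there is one vertex class, $\chi(M)=1-n+1=2-2g$, and thus $M\cong S_g$.

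Claim (ii) follows at once: the abelianization of $\pi_1(S_g)$ is $\mathbb{Z}^{2g}=\mathbb{Z}^n$, so no generating set of $\pi_1(S_g)$ has fewer than $n$ elements, and $P_n^{^+}$ attains this bound.

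For (iii) I would use the polygonal CW structure on $M\cong S_g$ coming from the gluing above, which realizes $S_g$ as a $K(\pi_1(S_g),1)$ with exactly one $2$-cell. Its universal cover is simply connected, aspherical and $2$-dimensional, hence homeomorphic to $\mathbb{R}^2$ (concretely the hyperbolic plane for $g\geq 2$, which is the case of interest here). This universal cover is canonically identified with the Cayley $2$-complex $\Cay^2(\pi_1(S_g),P_n^{^+})$, which is therefore a plane. The step requiring the most bookkeeping is the single-vertex verification in (i); the remaining ingredients are standard surface-group and covering-space facts.
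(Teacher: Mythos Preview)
Your proof is correct and follows the same approach as the paper's: realise the relator as the boundary word of a $2n$-gon, identify the quotient with $S_g$, and observe that the universal cover (which is the Cayley $2$-complex) is a plane. The paper's argument is much terser---it simply asserts that the identification space is the genus-$g$ surface and that its universal cover is planar---whereas you supply the Euler-characteristic/vertex-count computation, the abelianization argument for minimality, and the asphericity step, so your version is a more detailed execution of the same outline.
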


\begin{proof} Consider the polygon $\Delta_n$ with $2n$ sides,
labelled by the elements of
$\{x_1^{\pm1},x_2^{\pm1},\ldots,x_{n}^{\pm1}\}$ in the ordering:
\[
x_1 \cdot x_2 \cdots  x_{n} \cdot x_1^{-1} \cdot x_2^{-1} \cdots
x_{n}^{-1}.
\]
The identification of the side labelled $x_i$ with the one labelled
$x_i^{-1}$ defines an orientable surface of genus $g$. The
identification is an equivalence relation $\sim$ and $\Delta_n/\sim$
is the surface of genus $g$. The presentation $P_n^{^+}$ is minimal
since it has $n$ generators and it is geometric because the universal
cover of the surface $\Delta_n/\sim$ is nothing but the Cayley
2-complex $\Cay^2(\Gamma,P_n^{^+})$ that is a plane.
\end{proof}

Lemma \ref{old-lem:2.2} says that for geometric presentations the
generators have a cyclic ordering at each vertex. For the
presentation $P_n^{^+}$ the cyclic ordering is
\[
  x_1 < x_2^{-1} < x_3 < x_4^{-1}< \cdots <
  x_{n-1}< x_{n}^{-1}<x_1^{-1} <x_2 < \cdots <
  x_{n-1}^{-1}< x_{n}.
\]

\section{The topological entropy of the map
$\Phi_{P_n^{^+}}$}\label{sec:topent}

The aim of this section is to start the computation of the
topological entropy of the Bowen-Series-Like map $\Phi_{P_n^{^+}}$ for
the symmetric presentation $P_n^{^+}=\present{X/R}$ of the orientable
surface group of rank $n$.

Since the surface is orientable and the presentation is
geometric and minimal, then all generators $x \in X$ act on
$\partial \Gamma$ as an orientation preserving homeomorphism.
By Definition~\ref{BSLM}(ii), $\Phi_{P}\evalat{I_{x}}$ is an
orientation preserving homeomorphism for every $x\in X$ and from
Lemma~\ref{old-lem:2.9} the set $\cS$ defines a Markov
partition of $\Phi_{P}$. Since
$
\partial I_{x_i} \subset \cS
$
we also have that $\Phi_{P}$ is a homeomorphism on every
$\cS-$basic interval.

In this situation the topological entropy can be easily computed
as the logarithm of the spectral radius of the associated
\emph{Markov matrix}.
Let us recall such result.

Let $W$ be a Markov partition of a map {\map{\phi}{\SI}}, and let
$U_1, U_2, \dots,\linebreak[1] U_{\abs{W}}$ be a labelling of the $W-$basic
intervals.
The \emph{Markov matrix of $W$} is defined as the
$\abs{W} \times \abs{W}$ $(0,1)-$matrix
$M=(m_{ij})_{i,j=1}^{\abs{W}}$
such that
$m_{ij}=1$ if and only if $\phi(U_i) \supset U_j$.

For any square matrix $M$, we will denote its \emph{spectral radius}
by $\rho(M)$.

It is well known
(see for instance \cite{BGMY} or \cite[Theorem~4.4.5]{ALM}),
that if $\phi$  is monotone on each basic interval then
\begin{equation}\label{monotoneTopEnt}
  \htop(\phi) = \log\max\{\rho(M),1\}.
\end{equation}

We will use \eqref{monotoneTopEnt} to compute $\htop(\Phi_{P_n^{^+}}).$
To this end we first have to compute the Markov matrix of $\cS$
that, in what follows, will be denoted by $M_n^{^+}.$ As we will
see, a direct computation of $\rho\big(M_n^{^+}\big)$ is
infeasible at a practical level because the size of the matrix grows
quadratically with $n$. So, the computation of
$\rho\big(M_n^{^+}\big)$ will be done in two steps by using
spectral radius preserving transformations of the matrix
$M_n^{^+}$. In this section we will compute the Markov matrix
$M_n^{^+}$ for a symmetric presentation in the orientable case.

To do this, we need to specify completely the map $\Phi_{P_n^{^+}}$
and then compute its Markov matrix.
Recall that, for the symmetric presentation $P_n^{^+}$
(see comments after Proposition~\ref{old-prop:2.1}),
the cyclic ordering of Lemma~\ref{old-lem:2.2} at any vertex is given by:
\[
x_1 < x_2^{-1} < \dots < x_{n-1} < x_{n}^{-1}< x_{1}^{-1} < x_{2} < \dots < x_{n-1}^{-1}<  x_{n} < x_1
\]
(see  Figure~\ref{fig:notationCicOrdSimPres}). The main property of
this cyclic ordering which makes the symmetric presentation very
special and useful is that the edge that is opposite to $x$ at
any vertex is simply the edge $x^{-1}$.

\begin{figure}
\begin{tikzpicture}
\tikzstyle{post}=[>=stealth,semithick]
\tikzset{->-/.style={decoration={markings,mark=at position #1 with {\arrow{>}}},postaction={decorate}}}

\draw[->,post,->-=0.6] (0,0) to node[pos=0.8, below, rotate=90] {$x_{_{1}}$} (90:2);
\draw[>-,post,->-=0.4] (270:2) to node[pos=0.2, above, rotate=(90)] {$x_{_{1}}$} (0,0);

\foreach \angle / \lab in { 225/2, 135/n }{
   \draw[->,post,->-=0.6] (0,0) to node[pos=0.8, above, rotate=\angle-180] {$x_{_{\lab}}$} (\angle:2);
}
\foreach \angle in {325,335,...,385} { \node at (\angle:1.4) {$\cdot$}; \node at (\angle+180:1.5) {$\cdot$}; }
\foreach \angle / \lab in { 45/2, 315/n }{
   \draw[>-,post,->-=0.4] (\angle:2) to node[pos=0.2, below, rotate=(\angle)] {$x_{_{\lab}}$} (0,0);
}
\end{tikzpicture}
\caption{The cyclic ordering of the symmetric presentation.}\label{fig:notationCicOrdSimPres}
\end{figure}

The above cyclic ordering of the generators induces the following ordering of the
intervals $I_x$ along the boundary $\partial\Gamma = \SI$:
\[
 I_{x_1} < I_{x_2^{-1}} < \dots < I_{x_{n}^{-1}} <
 I_{x_{1}^{-1}} < I_{x_2} < \dots < I_{x_{n}} < I_{x_1}.
\]

The fact that the symmetric presentation has associated the above
cyclic ordering gives the following immediate corollary of
Lemma~\ref{old-lem:2.10}:

\begin{corollary}\label{CentralImage}
Let $P_n^{^+}$ be the symmetric presentation of an orientable surface group of rank $n$. Then,
$\Phi_{P_n^{^+}}(C_x) = I_x$ for each generator $x$.
\end{corollary}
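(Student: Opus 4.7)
The statement is essentially a direct specialization of Lemma~\ref{old-lem:2.10} to the symmetric presentation, so my plan is to apply that lemma and then verify that the opposite-edge generator $u$ it produces equals $x$ itself.

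First, I would invoke Lemma~\ref{old-lem:2.10}: since $P_n^{^+}$ is a geometric presentation whose single relation $x_1 x_2 \cdots x_n x_1^{-1} x_2^{-1} \cdots x_n^{-1}$ has even length $2n$, the lemma applies and yields that $\Phi_{P_n^{^+}}(C_x) = I_u$, where $u$ is the generator that is opposite to $x^{-1}$ under the cyclic ordering of Lemma~\ref{old-lem:2.2}(a) at the vertex $x$. This reduces the problem to computing $u$ explicitly for $P_n^{^+}$.

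Next, I would use the key combinatorial feature of the symmetric presentation highlighted just before the corollary: the induced cyclic ordering at every vertex is
\[
x_1 < x_2^{-1} < x_3 < x_4^{-1} < \cdots < x_{n}^{-1} < x_1^{-1} < x_2 < \cdots < x_{n-1}^{-1} < x_n,
\]
and the opposite (antipodal) edge of any generator $g$ in this ordering is $g^{-1}$. Applying this symmetry twice: the opposite of $x^{-1}$ is $(x^{-1})^{-1} = x$, so $u = x$. Substituting back into the conclusion of Lemma~\ref{old-lem:2.10} gives $\Phi_{P_n^{^+}}(C_x) = I_x$, as required.

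The proof has no real obstacle; the only thing to be slightly careful about is that Lemma~\ref{old-lem:2.10} refers to the cyclic ordering \emph{at the vertex $x$}, not at the identity. However, since $\Gamma$ acts on $\Cay^2(\Gamma, P_n^{^+})$ preserving the presentation structure, the cyclic ordering around every vertex is the same (this is precisely Lemma~\ref{old-lem:2.2}(a)), so the antipodal-edge description above is valid at $x$ as well, and the argument goes through directly with no additional calculation needed.
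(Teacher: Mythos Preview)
Your proposal is correct and follows exactly the paper's approach: the paper states the result as an immediate consequence of Lemma~\ref{old-lem:2.10} together with the observation (made just before the corollary) that in the symmetric presentation the edge opposite to any generator is its inverse. Your write-up simply makes this one-line deduction explicit, including the remark that the cyclic ordering is the same at every vertex.
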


For notational reasons we denote the ordered generators as
\[
  y_1 < y_2 < \dots  < y_{2n} < y_1
\]
and the corresponding intervals as
\[
  I_{y_1} < I_{y_2} < \dots  < I_{y_{2n}} < I_{y_1},
\]
where $y_i = x_i^{(-1)^{i+1}}$ for $1\le i\le n$, and $y_i =
x_{i-n}^{(-1)^i}$ for $n+1\le i\le 2n.$ Also, the fact that the
edge that is opposite to $x$ at any vertex is the edge $x^{-1}$ now
gives
\begin{equation}\label{eq:19}
y_i^{-1} = y_{\sbmod{i+n}{2n}}.
\end{equation}

Observe from  ~\eqref{eq:PartInt}  that each of the $2n$ intervals $I_{y_i}$
is divided into $2n-1$ intervals
\begin{equation}\label{BlockOrdering}
L_{y_i}^{n} < \dots < L_{y_i}^{3} <
C_{y_i}^L < C_{y_i} < C_{y_i}^R <
R_{y_i}^{3} < \dots < R_{y_i}^{n} .
\end{equation}
Hence, $\abs{\cS} = 2n(2n-1)$ and thus, the matrix $M_n^{^+}$ is
$2n(2n-1) \times 2n(2n-1).$

Equations~\eqref{old-eq:18}, \eqref{eq:19}
and Corollary~\ref{CentralImage} give the following images of
the partition intervals defined in \eqref{eq:PartInt}
(see Figure~\ref{old-fig:4}):
\begin{equation}\label{eq:TheImage}
\begin{split}
\Phi_{P_n^{^+}}(L_{y_i}^{j}) &= L_{y_{\sbmod{i+n+1}{2n}}}^{j-1}
    \text{for $j \in \{4, 5,\dots, n\}$},\\
\Phi_{P_n^{^+}}(L_{y_i}^{3}) &= C_{y_{\sbmod{i+n+1}{2n}}}^{L} \cup
          C_{y_{\sbmod{i+n+1}{2n}}},\\
\Phi_{P_n^{^+}}(C_{y_i}^{L}) &=
   C_{y_{\sbmod{i+n+1}{2n}}}^{R} \cup
   \left( \bigcup_{j=2}^{n} R_{y_{\sbmod{i+n+1}{2n}}}^{j}\right) \cup
   \left( \bigcup_{k=\sbmod{i+n+2}{2n}}^{\sbmod{i-1}{2n}} I_{y_{k}}\right),\\
\Phi_{P_n^{^+}}(C_{y_i}) &= I_{y_i},\\
\Phi_{P_n^{^+}}(C_{y_i}^{R}) &=
   C_{y_{\sbmod{i+n-1}{2n}}}^{L} \cup
   \left( \bigcup_{j=2}^{n} L_{y_{\sbmod{i+n-1}{2n}}}^{j}\right) \cup
   \left( \bigcup_{k=\sbmod{i+1}{2n}}^{\sbmod{i+n-2}{2n}} I_{y_{k}}\right),\\
\Phi_{P_n^{^+}}(R_{y_i}^{3}) &=  C_{y_{\sbmod{i+n-1}{2n}}} \cup
          C_{y_{\sbmod{i+n-1}{2n}}}^{R},\\
\Phi_{P_n^{^+}}(R_{y_i}^{j}) &= R_{y_{\sbmod{i+n-1}{2n}}}^{j-1}
    \text{for $j \in \{4, 5,\dots, n\}$}.
\end{split}
\end{equation}
\begin{figure}
\tikzstyle{post}=[->,shorten >=1pt,>=stealth,semithick]
\begin{tikzpicture}
\foreach \angle in {15,45,105,135,...,360}{
    \foreach \ini / \fin in { 0/2.5, 2.5/5, 5/7.5,
                              7.5/10.5, 10.5/13.5,
                              13.5/16.5,
                              16.5/19.5, 19.5/22.5,
                              22.5/25, 25/27.5, 27.5/30}{
        \draw[very thick] (\angle + \ini:150pt) arc (\angle + \ini:\angle + \fin:150pt);
        \draw[thick] (\angle + \ini:147pt) -- +(\angle + \ini:6pt);
    }
    \draw[very thick] (\angle:144pt) -- +(\angle:12pt);
}
\foreach \ini in { 0,2.5,5,7.5,10.5,13.5,16.5,19.5,22.5,25,27.5,30}{ \draw[thick] (75 + \ini:147pt) -- +(75 + \ini:6pt); }
\draw[very thick] (75:144pt) -- +(75:12pt);
\draw[very thick] (75:150pt) arc (75:82.5:150pt); \draw[very thick] (97.5:150pt) arc (97.5:105:150pt);
\draw[very thick, style=densely dotted] (82.5:150pt) arc (82.5:85.5:150pt); \draw[very thick, style=densely dotted] (94.5:150pt) arc (94.5:97.5:150pt);
\draw[line width=2pt] (85.5:150pt) arc (85.5:88.5:150pt); \draw[line width=2pt] (91.5:150pt) arc (91.5:94.5:150pt);
\draw[line width=3pt, color=black!50] (88.5:150pt) arc (88.5:91.5:150pt);

\foreach \ini in { 225, 255, 285 }{
   \draw[dashed] (\ini:150pt) -- (\ini:120pt);
   \draw[dashed] (\ini + 30:150pt) -- (\ini + 30:120pt);
   \draw[<->,shorten >=1pt,>=stealth,semithick] (\ini:125pt) arc (\ini:\ini + 30:125pt);
}
\node[rotate=330] at (240:115pt) [anchor=center] {$I_{y_{\sbmod{i+n+1}{2n}}}$};
\node at (270:105pt) [anchor=center] {$\begin{subarray}{l}I_{y^{-1}_{i}} =\\[-3pt]
                           \hspace*{1.5em}I_{y_{\sbmod{i+n}{2n}}}\end{subarray}$};
\node[rotate=30]  at (300:115pt) [anchor=center] {$I_{y_{\sbmod{i+n-1}{2n}}}$};

\node[rotate=15,anchor=north] at (104:123pt) {\footnotesize $L^j_{y_i}$};
\draw[post] (105:120pt) -- (103.75:147pt);
\draw[post] (105:120pt) -- (101.25:147pt);
\draw[post] (105:120pt) -- (98.75:147pt);
\draw[post] (105:120pt) -- (96:147pt);

\node[rotate=5,anchor=north] at (96:123pt) {\footnotesize $C^L_{y_i}$}; \draw[post] (97:120pt) -- (93:147pt);
\node[anchor=north] at (89:118pt) {\footnotesize $C_{y_i}$}; \draw[post] (90:118pt) -- (90:147pt);
\node[rotate=359,anchor=north] at (82:123pt) {\footnotesize $C^R_{y_i}$}; \draw[post] (83:120pt) -- (87:147pt);

\node[rotate=355,anchor=north] at (74:123pt) {\footnotesize $R^j_{y_i}$};
\draw[post] (75:120pt) -- (76.25:147pt);
\draw[post] (75:120pt) -- (78.75:147pt);
\draw[post] (75:120pt) -- (81.25:147pt);
\draw[post] (75:120pt) -- (84:147pt);

\draw[dashed] (105:150pt) -- (115:95pt); \draw[dashed] (75:150pt) -- (65:95pt);
\draw[<->,shorten >=1pt,>=stealth,semithick] (67:100pt)
                arc (67:114:100pt)
                node at (90:90pt) [anchor=center] {$I_{y_i}$};

\foreach \ini in {287.5,290,292.5,295.5,301.5,435,75,105,238.5,244.5,247.5,250,252.5}{ \draw[thick] (\ini:157pt) -- +(\ini:6pt); }
\draw[thick] (252.5:157pt) -- +(252.5:6pt);

\draw[line width=3pt, color=black!50] (75:160pt) arc (75:105:160pt);
\draw[line width=2pt] (301.5:160pt) arc (301.5:435:160pt);\draw[line width=2pt] (105:160pt) arc (105:238.5:160pt);
\draw[very thick, style=densely dotted] (295.5:160pt) arc (295.5:301.5:160pt);
      \draw[very thick, style=densely dotted] (238.5:160pt) arc (238.5:244.5:160pt);
\draw[very thick] (287.5:160pt) arc (287.5:295.5:160pt);\draw[very thick] (244.5:160pt) arc (244.5:252.5:160pt);
\end{tikzpicture}
\caption{The intervals $I_{y_i}$ in the circle together with the
interior intervals.
The outer curve is the image $\Phi_{P_n^{^+}}\evalat{I_{y_i}}$
(which is order preserving).
The intervals $L^j_{y_i}$, $R^j_{y_i}$ and their images are drawn with
a continuous black line,
$L^3_{y_i}$, $R^3_{y_i}$ and their images are drawn with a dotted line,
$C^L_{y_i}$, $C^R_{y_i}$ and their images are drawn with a continuous
thick black line and finally,
$C_{y_i}$ and its image are drawn with a continuous thick grey
line.}\label{old-fig:4}
\end{figure}

From the formulae \eqref{eq:TheImage} it follows that the Markov
matrix $M_n^{^+}$ has a structure in blocks,
all of size $(2n-1) \times (2n-1).$
So, it is convenient to write the matrix $M_n^{^+}$ as
\begin{equation}\label{MarkovMatrix}
\begin{pmatrix}
M_{11} & M_{12} & \dots & M_{1,2n}\\
M_{21} & M_{22} & \dots & M_{2,2n}\\
\hdotsfor{4}\\
M_{n1} & M_{n2} & \dots & M_{n,2n}\\
\hdotsfor{4}\\
M_{2n,1} & M_{2n,2} & \dots & M_{2n,2n}\\
\end{pmatrix}
\end{equation}
where each of the matrices $M_{lt} = (m^{lt}_{ij})_{i,j=1}^{2n-1}$
is of size $(2n-1) \times (2n-1).$

Accordingly, we will label the basic intervals contained in $I_{y_i}$
as $U^i_j$ in such a way that they preserve the ordering given in
\eqref{BlockOrdering}.
So, for $i = 1,2,\dots, 2n$ and $j=1,2,\dots,2n-1$ it follows that
\begin{equation}\label{MarkovLabelling}
U^i_j = \begin{cases}
   L_{y_i}^{(n + 1) - j} & \text{for $j=1,2,\dots,n-2$},\\
   C_{y_i}^{L} & \text{for $j=n-1$},\\
   C_{y_i} & \text{for $j=n$},\\
   C_{y_i}^{R} & \text{for $j=n+1$},\\
   R_{y_i}^{j - (n - 1)} & \text{for $j=n+2,n+3,\dots,2n-1$}.
\end{cases}
\end{equation}
With this labelling we define the matrix $M_n^{^+}$ so that
$m^{lt}_{ij} = 1$ if and only if $\phi(U^l_i) \supset U^t_j$.

The next theorem is a first reduction in the effective computation of
$\htop(\Phi_{P_n^{^+}})$.

\begin{theorem}\label{firstreduction}
\[
  \htop(\Phi_{P_n^{^+}}) =
  \log\max\left\{\rho\big(M_n^{^+}\big),1\right\} =
  \log\max\left\{\rho\left(\sum_{k=1}^{2n} M_{1k}\right),1\right\}.
\]
\end{theorem}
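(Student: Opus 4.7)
The plan splits the theorem cleanly into its two equalities, and the second one is the substantive half.

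\textbf{First equality.} Since each $x \in X$ acts on $\partial\Gamma = \SI$ as an orientation-preserving homeomorphism, Definition~\ref{BSLM}(ii) tells us that $\Phi_{P_n^{^+}}$ is orientation-preserving on each interval $I_{x}$. Together with Lemma~\ref{old-lem:2.9} (which gives that $\cS$ is a Markov partition and, because $\partial I_{x} \subset \cS$, the map is monotone on every $\cS$-basic interval), Equation~\eqref{monotoneTopEnt} applies and yields $\htop(\Phi_{P_n^{^+}}) = \log\max\{\rho(M_n^{^+}),1\}$.

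\textbf{Second equality: block-circulant structure.} The key observation is that the symmetric presentation $P_n^{^+}$ carries a rotational symmetry that acts on the generators as $y_i \mapsto y_{\sbmod{i+1}{2n}}$, and this symmetry is reflected in the transition formulae~\eqref{eq:TheImage}. Concretely, replacing $i$ by $i+1$ in the right-hand sides of those formulae shifts every target index by $+1$ as well. Translated into the block decomposition~\eqref{MarkovMatrix}, this says exactly that
\[
M_{\sbmod{l+1}{2n},\sbmod{t+1}{2n}} = M_{l,t}
\quad \text{for all } l,t \in \{1,\dots,2n\}.
\]
Hence $M_n^{^+}$ is a \emph{block-circulant} matrix with first block-row $(M_{11}, M_{12}, \dots, M_{1,2n})$, each block of size $(2n-1)\times(2n-1)$. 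I will verify this identity by inspecting the seven cases of~\eqref{eq:TheImage} one at a time; in each case the shift $i \mapsto i+1$ on the source produces precisely the shift by $1$ on the image indices.

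\textbf{Reducing the spectral radius.} For any block-circulant matrix, the standard Fourier diagonalization over the cyclic group $\mathbb{Z}/2n\mathbb{Z}$ gives
\[
\operatorname{spec}\bigl(M_n^{^+}\bigr) = \bigcup_{j=0}^{2n-1}
\operatorname{spec}\!\left(\sum_{k=1}^{2n} \omega^{jk}\, M_{1k}\right),
\qquad \omega = e^{2\pi i/(2n)}.
\]
(One writes $M_n^{^+} = \sum_{k} P^{\,k-1}\otimes M_{1k}$ with $P$ the $2n\times 2n$ cyclic shift, and diagonalises $P$.) Taking $j=0$ already shows that every eigenvalue of $\sum_k M_{1k}$ is an eigenvalue of $M_n^{^+}$, giving $\rho(\sum_k M_{1k}) \le \rho(M_n^{^+})$. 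For the opposite inequality, all $M_{1k}$ are nonnegative, so entrywise
\[
\Bigl|\sum_{k=1}^{2n}\omega^{jk}M_{1k}\Bigr| \;\le\; \sum_{k=1}^{2n} M_{1k},
\]
and comparison of nonnegative matrices (a routine Perron--Frobenius consequence: $\rho(A)\le\rho(B)$ whenever $|A|\le B$) gives $\rho(\sum_k \omega^{jk}M_{1k}) \le \rho(\sum_k M_{1k})$ for every $j$. Combining the two bounds yields $\rho(M_n^{^+}) = \rho(\sum_{k=1}^{2n} M_{1k})$, finishing the proof.

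\textbf{Main obstacle.} The only nontrivial bookkeeping is verifying the block-circulant identity $M_{l+1,t+1}=M_{l,t}$ directly from~\eqref{eq:TheImage}: one must check that the labelling convention~\eqref{MarkovLabelling} is compatible with the shift, so that the shift acts on the ordered basic intervals inside $I_{y_i}$ as the identity on the ``$j$'' index. Once this is in place, the Fourier/Perron--Frobenius step is entirely standard.
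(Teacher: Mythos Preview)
Your proof is correct. The first equality and the verification that $M_n^{^+}$ is block-circulant match the paper's argument (Lemma~\ref{Markovcirculating}) essentially verbatim. For the spectral radius identity $\rho(M_n^{^+}) = \rho\bigl(\sum_k M_{1k}\bigr)$, however, you take a genuinely different route from the paper's Lemma~\ref{Spectralcirculating}. The paper argues purely over the reals: it shows combinatorially that in $A^m$ every ordered product $A_{r_1}\cdots A_{r_m}$ of $m$ blocks appears exactly once in each block row, so that $\sum_j A^{(m)}_{qj} = \bigl(\sum_j A_j\bigr)^m$ for every $q$, whence $\lVert A^m\rVert_\infty = \lVert(\sum_j A_j)^m\rVert_\infty$ and Gelfand's formula finishes. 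Your Fourier/tensor-product block-diagonalisation followed by the Perron--Frobenius comparison $\lvert B\rvert \le C \Rightarrow \rho(B)\le\rho(C)$ is the textbook alternative: more conceptual and shorter if one already has the machinery, but it invokes two external facts where the paper's argument is entirely self-contained. One further difference in scope: the paper remarks that its norm argument works for any block matrix whose block rows and block columns are each permutations of a fixed multiset of blocks (a ``Latin square'' block structure), not just circulants; your Fourier diagonalisation relies on the cyclic structure and does not extend to that generality.
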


An \emph{$(r,s)-$block circulant matrix} is a matrix of the form
\[
\begin{pmatrix}
A_1 & A_2 & A_3 & \dots & A_r\\
A_r & A_1 & A_2 & \dots & A_{r-1}\\
A_{r-1} & A_r & A_1 & \dots & A_{r-2}\\
\hdotsfor{5}\\
A_2  & A_3 & A_4 & \dots & A_1
\end{pmatrix}
\]
where each $A_i$ is an $s \times s$ matrix.
Notice that a circulant matrix is completely determined by its first
block row $(A_1\ A_2\ A_3\ \dots\ A_r)$.

The next lemma will be crucial in effectively computing the spectral
radius of $M_n^{^+}$ (see Figure~\ref{BigMatg2} for an example
in the case of rank 4).
\begin{figure}
\fontsize{10}{12}
\def\I{1}
\def\Z{\textcolor{gray}{0}}
\[
\setcounter{MaxMatrixCols}{56}\arraycolsep=0.5pt\def\arraystretch{0.5}
\left(\begin{array}{ccccccc|ccccccc|ccccccc|ccccccc|ccccccc|ccccccc|ccccccc|ccccccc}
\Z&\Z&\Z&\Z&\Z&\Z&\Z&\Z&\Z&\Z&\Z&\Z&\Z&\Z&\Z&\Z&\Z&\Z&\Z&\Z&\Z&\Z&\Z&\Z&\Z&\Z&\Z&\Z&\Z&\Z&\Z&\Z&\Z&\Z&\Z&\Z&\I&\Z&\Z&\Z&\Z&\Z&\Z&\Z&\Z&\Z&\Z&\Z&\Z&\Z&\Z&\Z&\Z&\Z&\Z&\Z\\
\Z&\Z&\Z&\Z&\Z&\Z&\Z&\Z&\Z&\Z&\Z&\Z&\Z&\Z&\Z&\Z&\Z&\Z&\Z&\Z&\Z&\Z&\Z&\Z&\Z&\Z&\Z&\Z&\Z&\Z&\Z&\Z&\Z&\Z&\Z&\Z&\Z&\I&\I&\Z&\Z&\Z&\Z&\Z&\Z&\Z&\Z&\Z&\Z&\Z&\Z&\Z&\Z&\Z&\Z&\Z\\
\Z&\Z&\Z&\Z&\Z&\Z&\Z&\Z&\Z&\Z&\Z&\Z&\Z&\Z&\Z&\Z&\Z&\Z&\Z&\Z&\Z&\Z&\Z&\Z&\Z&\Z&\Z&\Z&\Z&\Z&\Z&\Z&\Z&\Z&\Z&\Z&\Z&\Z&\Z&\I&\I&\I&\I&\I&\I&\I&\I&\I&\I&\I&\I&\I&\I&\I&\I&\I\\
\I&\I&\I&\I&\I&\I&\I&\Z&\Z&\Z&\Z&\Z&\Z&\Z&\Z&\Z&\Z&\Z&\Z&\Z&\Z&\Z&\Z&\Z&\Z&\Z&\Z&\Z&\Z&\Z&\Z&\Z&\Z&\Z&\Z&\Z&\Z&\Z&\Z&\Z&\Z&\Z&\Z&\Z&\Z&\Z&\Z&\Z&\Z&\Z&\Z&\Z&\Z&\Z&\Z&\Z\\
\Z&\Z&\Z&\Z&\Z&\Z&\Z&\I&\I&\I&\I&\I&\I&\I&\I&\I&\I&\I&\I&\I&\I&\I&\I&\I&\Z&\Z&\Z&\Z&\Z&\Z&\Z&\Z&\Z&\Z&\Z&\Z&\Z&\Z&\Z&\Z&\Z&\Z&\Z&\Z&\Z&\Z&\Z&\Z&\Z&\Z&\Z&\Z&\Z&\Z&\Z&\Z\\
\Z&\Z&\Z&\Z&\Z&\Z&\Z&\Z&\Z&\Z&\Z&\Z&\Z&\Z&\Z&\Z&\Z&\Z&\Z&\Z&\Z&\Z&\Z&\Z&\I&\I&\Z&\Z&\Z&\Z&\Z&\Z&\Z&\Z&\Z&\Z&\Z&\Z&\Z&\Z&\Z&\Z&\Z&\Z&\Z&\Z&\Z&\Z&\Z&\Z&\Z&\Z&\Z&\Z&\Z&\Z\\
\Z&\Z&\Z&\Z&\Z&\Z&\Z&\Z&\Z&\Z&\Z&\Z&\Z&\Z&\Z&\Z&\Z&\Z&\Z&\Z&\Z&\Z&\Z&\Z&\Z&\Z&\I&\Z&\Z&\Z&\Z&\Z&\Z&\Z&\Z&\Z&\Z&\Z&\Z&\Z&\Z&\Z&\Z&\Z&\Z&\Z&\Z&\Z&\Z&\Z&\Z&\Z&\Z&\Z&\Z&\Z\\
\hline\rule{0pt}{8pt}
\Z&\Z&\Z&\Z&\Z&\Z&\Z&\Z&\Z&\Z&\Z&\Z&\Z&\Z&\Z&\Z&\Z&\Z&\Z&\Z&\Z&\Z&\Z&\Z&\Z&\Z&\Z&\Z&\Z&\Z&\Z&\Z&\Z&\Z&\Z&\Z&\Z&\Z&\Z&\Z&\Z&\Z&\Z&\I&\Z&\Z&\Z&\Z&\Z&\Z&\Z&\Z&\Z&\Z&\Z&\Z\\
\Z&\Z&\Z&\Z&\Z&\Z&\Z&\Z&\Z&\Z&\Z&\Z&\Z&\Z&\Z&\Z&\Z&\Z&\Z&\Z&\Z&\Z&\Z&\Z&\Z&\Z&\Z&\Z&\Z&\Z&\Z&\Z&\Z&\Z&\Z&\Z&\Z&\Z&\Z&\Z&\Z&\Z&\Z&\Z&\I&\I&\Z&\Z&\Z&\Z&\Z&\Z&\Z&\Z&\Z&\Z\\
\I&\I&\I&\I&\I&\I&\I&\Z&\Z&\Z&\Z&\Z&\Z&\Z&\Z&\Z&\Z&\Z&\Z&\Z&\Z&\Z&\Z&\Z&\Z&\Z&\Z&\Z&\Z&\Z&\Z&\Z&\Z&\Z&\Z&\Z&\Z&\Z&\Z&\Z&\Z&\Z&\Z&\Z&\Z&\Z&\I&\I&\I&\I&\I&\I&\I&\I&\I&\I\\
\Z&\Z&\Z&\Z&\Z&\Z&\Z&\I&\I&\I&\I&\I&\I&\I&\Z&\Z&\Z&\Z&\Z&\Z&\Z&\Z&\Z&\Z&\Z&\Z&\Z&\Z&\Z&\Z&\Z&\Z&\Z&\Z&\Z&\Z&\Z&\Z&\Z&\Z&\Z&\Z&\Z&\Z&\Z&\Z&\Z&\Z&\Z&\Z&\Z&\Z&\Z&\Z&\Z&\Z\\
\Z&\Z&\Z&\Z&\Z&\Z&\Z&\Z&\Z&\Z&\Z&\Z&\Z&\Z&\I&\I&\I&\I&\I&\I&\I&\I&\I&\I&\I&\I&\I&\I&\I&\I&\I&\Z&\Z&\Z&\Z&\Z&\Z&\Z&\Z&\Z&\Z&\Z&\Z&\Z&\Z&\Z&\Z&\Z&\Z&\Z&\Z&\Z&\Z&\Z&\Z&\Z\\
\Z&\Z&\Z&\Z&\Z&\Z&\Z&\Z&\Z&\Z&\Z&\Z&\Z&\Z&\Z&\Z&\Z&\Z&\Z&\Z&\Z&\Z&\Z&\Z&\Z&\Z&\Z&\Z&\Z&\Z&\Z&\I&\I&\Z&\Z&\Z&\Z&\Z&\Z&\Z&\Z&\Z&\Z&\Z&\Z&\Z&\Z&\Z&\Z&\Z&\Z&\Z&\Z&\Z&\Z&\Z\\
\Z&\Z&\Z&\Z&\Z&\Z&\Z&\Z&\Z&\Z&\Z&\Z&\Z&\Z&\Z&\Z&\Z&\Z&\Z&\Z&\Z&\Z&\Z&\Z&\Z&\Z&\Z&\Z&\Z&\Z&\Z&\Z&\Z&\I&\Z&\Z&\Z&\Z&\Z&\Z&\Z&\Z&\Z&\Z&\Z&\Z&\Z&\Z&\Z&\Z&\Z&\Z&\Z&\Z&\Z&\Z\\
\hline\rule{0pt}{8pt}
\Z&\Z&\Z&\Z&\Z&\Z&\Z&\Z&\Z&\Z&\Z&\Z&\Z&\Z&\Z&\Z&\Z&\Z&\Z&\Z&\Z&\Z&\Z&\Z&\Z&\Z&\Z&\Z&\Z&\Z&\Z&\Z&\Z&\Z&\Z&\Z&\Z&\Z&\Z&\Z&\Z&\Z&\Z&\Z&\Z&\Z&\Z&\Z&\Z&\Z&\I&\Z&\Z&\Z&\Z&\Z\\
\Z&\Z&\Z&\Z&\Z&\Z&\Z&\Z&\Z&\Z&\Z&\Z&\Z&\Z&\Z&\Z&\Z&\Z&\Z&\Z&\Z&\Z&\Z&\Z&\Z&\Z&\Z&\Z&\Z&\Z&\Z&\Z&\Z&\Z&\Z&\Z&\Z&\Z&\Z&\Z&\Z&\Z&\Z&\Z&\Z&\Z&\Z&\Z&\Z&\Z&\Z&\I&\I&\Z&\Z&\Z\\
\I&\I&\I&\I&\I&\I&\I&\I&\I&\I&\I&\I&\I&\I&\Z&\Z&\Z&\Z&\Z&\Z&\Z&\Z&\Z&\Z&\Z&\Z&\Z&\Z&\Z&\Z&\Z&\Z&\Z&\Z&\Z&\Z&\Z&\Z&\Z&\Z&\Z&\Z&\Z&\Z&\Z&\Z&\Z&\Z&\Z&\Z&\Z&\Z&\Z&\I&\I&\I\\
\Z&\Z&\Z&\Z&\Z&\Z&\Z&\Z&\Z&\Z&\Z&\Z&\Z&\Z&\I&\I&\I&\I&\I&\I&\I&\Z&\Z&\Z&\Z&\Z&\Z&\Z&\Z&\Z&\Z&\Z&\Z&\Z&\Z&\Z&\Z&\Z&\Z&\Z&\Z&\Z&\Z&\Z&\Z&\Z&\Z&\Z&\Z&\Z&\Z&\Z&\Z&\Z&\Z&\Z\\
\Z&\Z&\Z&\Z&\Z&\Z&\Z&\Z&\Z&\Z&\Z&\Z&\Z&\Z&\Z&\Z&\Z&\Z&\Z&\Z&\Z&\I&\I&\I&\I&\I&\I&\I&\I&\I&\I&\I&\I&\I&\I&\I&\I&\I&\Z&\Z&\Z&\Z&\Z&\Z&\Z&\Z&\Z&\Z&\Z&\Z&\Z&\Z&\Z&\Z&\Z&\Z\\
\Z&\Z&\Z&\Z&\Z&\Z&\Z&\Z&\Z&\Z&\Z&\Z&\Z&\Z&\Z&\Z&\Z&\Z&\Z&\Z&\Z&\Z&\Z&\Z&\Z&\Z&\Z&\Z&\Z&\Z&\Z&\Z&\Z&\Z&\Z&\Z&\Z&\Z&\I&\I&\Z&\Z&\Z&\Z&\Z&\Z&\Z&\Z&\Z&\Z&\Z&\Z&\Z&\Z&\Z&\Z\\
\Z&\Z&\Z&\Z&\Z&\Z&\Z&\Z&\Z&\Z&\Z&\Z&\Z&\Z&\Z&\Z&\Z&\Z&\Z&\Z&\Z&\Z&\Z&\Z&\Z&\Z&\Z&\Z&\Z&\Z&\Z&\Z&\Z&\Z&\Z&\Z&\Z&\Z&\Z&\Z&\I&\Z&\Z&\Z&\Z&\Z&\Z&\Z&\Z&\Z&\Z&\Z&\Z&\Z&\Z&\Z
\end{array}\right)
\]
\caption{The first three (of the total of eight) block rows of the Markov matrix
$M_{P_4^{^+}}$ corresponding to the symmetric presentation of an orientable surface group of rank 4.}\label{BigMatg2}
\end{figure}

\begin{lemma}\label{Markovcirculating}
The Markov matrix $M_n^{^+}$ is a $(2n, 2n-1)-$block circulant
matrix.
\end{lemma}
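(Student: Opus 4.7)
The plan is to exhibit the cyclic symmetry at the combinatorial level. Define the shift $\sigma(i) := \sbmod{i+1}{2n}$ on block indices and extend it to basic intervals by $\sigma(U^i_j) := U^{\sigma(i)}_j$ (preserving the subindex $j$). Unpacking the definition of $(2n, 2n-1)$-block circulant given just above the statement, what must be shown is that $M_{lt} = M_{\sigma(l)\,\sigma(t)}$ for all $l, t \in \{1,\dots,2n\}$; iterating this gives that $M_{lt}$ depends only on $\sbmod{t-l}{2n}$, which is precisely the block circulant property with first block row $(M_{11}, M_{12}, \dots, M_{1,2n})$.

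To establish the equality $M_{lt} = M_{\sigma(l)\,\sigma(t)}$, I would read off \eqref{eq:TheImage} the following uniformity: the image of any basic interval $U^i_j$ is a union of basic intervals of the form $U^{\sbmod{i+c}{2n}}_{k}$, where the constants $c$ and target subindices $k$ depend only on the \emph{type} of $U^i_j$ (one of $L^{j}_{y_i}$, $C^L_{y_i}$, $C_{y_i}$, $C^R_{y_i}$, $R^{j}_{y_i}$) and not on $i$ itself. This is transparent in each of the seven cases: $L^j_{y_i}$ maps to $L^{j-1}_{y_{\sbmod{i+n+1}{2n}}}$ (so $c=n+1$, subindex $j-1$, type $L$), $L^3_{y_i}$ maps to the two subintervals $C^L_{y_{\sbmod{i+n+1}{2n}}}\cup C_{y_{\sbmod{i+n+1}{2n}}}$, $C_{y_i}$ maps to the whole $I_{y_i}$ (so $c=0$, covering every subindex in that block), and symmetrically on the right side; the images of $C^L_{y_i}$ and $C^R_{y_i}$ consist of pieces with target indices $\sbmod{i+n+1}{2n}$, $\sbmod{i+n-1}{2n}$, together with a cyclic run of $n-2$ consecutive indices $\sbmod{i+n+2}{2n},\dots,\sbmod{i-1}{2n}$ (respectively $\sbmod{i+1}{2n},\dots,\sbmod{i+n-2}{2n}$), again with types and subindices determined by the source type alone.

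Once this uniformity is in hand, replacing $i$ by $\sigma(i) = \sbmod{i+1}{2n}$ shifts every target index $\sbmod{i+c}{2n}$ to $\sbmod{(i+1)+c}{2n}$ without altering subindex or type, so the set of inclusions $U^{l'}_{k} \subset \Phi_{P_n^{^+}}(U^l_j)$ is carried bijectively by $\sigma$ onto the set of inclusions $U^{\sigma(l')}_{k} \subset \Phi_{P_n^{^+}}(U^{\sigma(l)}_j)$. Translated to the Markov matrix, this reads $m^{lt}_{jk} = m^{\sigma(l)\,\sigma(t)}_{jk}$ for all $j,k,l,t$, i.e.\ $M_{lt} = M_{\sigma(l)\,\sigma(t)}$, as required. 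The only point that needs a bit of care is the interpretation of the cyclic union $\bigcup_{k=\sbmod{i+n+2}{2n}}^{\sbmod{i-1}{2n}} I_{y_k}$ appearing in the inner-central images: it must be read as the length-$(n-2)$ arc in $\mathbb{Z}/2n\mathbb{Z}$ starting at $\sbmod{i+n+2}{2n}$, and under that reading shifting $i$ by one translates both endpoints (hence the whole arc) uniformly by $\sigma$. Beyond this bookkeeping the proof is a direct inspection of \eqref{eq:TheImage}, and I do not expect any substantive obstacle.
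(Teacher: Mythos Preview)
Your proposal is correct and follows essentially the same approach as the paper: both extract from the explicit formulae \eqref{eq:TheImage} the shift-equivariance $m^{lt}_{ij}=m^{\sbmod{l+1}{2n}\,\sbmod{t+1}{2n}}_{ij}$, which is exactly the block circulant condition. Your write-up is more detailed in spelling out why each case of \eqref{eq:TheImage} depends on $i$ only through a uniform additive shift, but the underlying argument is identical.
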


\begin{proof}
From the formulae \eqref{eq:TheImage} it follows that
$\Phi_{P_n^{^+}}(U^l_i) \supset U^t_j$
if and only if
$\Phi_{P_n^{^+}}(U^{\sbmod{l+1}{2n}}_i) \supset U^{\sbmod{t+1}{2n}}_j.$
In terms of the Markov matrix this amounts to
$m^{lt}_{ij} = m^{\sbmod{l+1}{2n},\sbmod{t+1}{2n}}_{ij}$
for every
$l,t \in \{1,2,\dots,2n\}$ and
$i,j \in \{1,2,\dots,2n-1\}.$
This implies that
$M_{lt} = M_{\sbmod{l+1}{2n},\sbmod{t+1}{2n}}$.
\end{proof}

The next technical lemma provides a nice and useful result about the
spectral radius of block circulant matrices.

\begin{lemma}\label{Spectralcirculating}
Let
\[
A = \begin{pmatrix}
A_1 & A_2 & A_3 & \dots & A_r\\
A_r & A_1 & A_2 & \dots & A_{r-1}\\
A_{r-1} & A_r & A_1 & \dots & A_{r-2}\\
\hdotsfor{5}\\
A_2  & A_3 & A_4 & \dots & A_1
\end{pmatrix}
\]
be a non-negative block circulant matrix.
Then
\[
 \rho(A) = \rho\left(\sum_{i=1}^r A_i \right).
\]
\end{lemma}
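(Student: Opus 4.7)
The plan is to block-diagonalize $A$ by the discrete Fourier transform and then compare spectral radii using the non-negativity hypothesis.

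Let $s$ be the common size of the blocks $A_i$, and let $\omega = e^{2\pi i/r}$. Set $F = \bigl(\omega^{(j-1)(k-1)}\bigr)_{j,k=1}^{r}$, the $r \times r$ Fourier matrix, and consider the invertible $rs \times rs$ matrix $F \otimes I_s$. The classical fact (provable by a direct computation using the identity $\sum_{k=0}^{r-1} \omega^{kp} = 0$ for $p \not\equiv 0 \pmod{r}$) is that
\[
(F \otimes I_s)^{-1}\, A\, (F \otimes I_s) \;=\; \mathrm{diag}(B_1, B_2, \dots, B_r),
\]
where
\[
B_j \;=\; \sum_{k=1}^{r} \omega^{(j-1)(k-1)} A_k, \qquad j=1,2,\dots,r.
\]
Consequently the spectrum of $A$ is the disjoint union of the spectra of the $B_j$, and therefore
\[
\rho(A) \;=\; \max_{1 \le j \le r} \rho(B_j).
\]

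For $j = 1$ we have $B_1 = \sum_{k=1}^r A_k$, which is non-negative by hypothesis, so $\rho(B_1) = \rho\bigl(\sum_{k=1}^r A_k\bigr)$. For any other $j$ the entries of $B_j$ are complex, but the triangle inequality together with non-negativity of the $A_k$ yields the entrywise domination
\[
\bigl\lvert [B_j]_{pq} \bigr\rvert \;\le\; \sum_{k=1}^{r} [A_k]_{pq} \;=\; [B_1]_{pq}.
\]
By the standard monotonicity principle for the spectral radius (Wielandt's comparison theorem: if $\lvert C \rvert \le D$ entrywise with $D \ge 0$, then $\rho(C) \le \rho(D)$, proved by iterating $\lvert C^n \rvert \le \lvert C \rvert^n \le D^n$ and invoking Gelfand's formula $\rho(M) = \lim_n \lVert M^n \rVert^{1/n}$), we deduce $\rho(B_j) \le \rho(B_1)$ for every $j$.

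Combining the two, $\rho(A) = \rho(B_1) = \rho\bigl(\sum_{i=1}^{r} A_i\bigr)$, as required. The only non-routine ingredient is the Fourier block-diagonalization, but this is standard for circulant matrices; the non-negativity assumption enters only to guarantee that the $j=1$ Fourier block dominates, so that the maximum over $j$ is attained there.
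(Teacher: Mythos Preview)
Your argument is correct. The Fourier block-diagonalization of a block circulant matrix is standard, and once you have $A \sim \mathrm{diag}(B_1,\dots,B_r)$ with $B_1 = \sum_k A_k$, the entrywise bound $|B_j| \le B_1$ together with Wielandt's comparison theorem finishes the job cleanly.

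The paper, however, takes a different and more elementary route: it never passes to complex numbers or invokes the DFT. Instead it observes that in $A^m$ every product $A_{r_1}\cdots A_{r_m}$ appears exactly once in each block row and each block column, so that the block-row sum $\sum_j A^{(m)}_{qj}$ equals $\bigl(\sum_i A_i\bigr)^m$ for every $q$; hence $\|A^m\|_\infty = \|(\sum_i A_i)^m\|_\infty$, and Gelfand's formula gives the conclusion directly. Your approach is slicker and identifies the full spectrum of $A$, not just its radius. The paper's approach, on the other hand, has the virtue of generalizing verbatim to any block matrix in which each block $A_i$ appears exactly once in every block row and column (a Latin-square pattern), as the authors remark immediately after the lemma; this flexibility is relevant later in Section~4, where a ``disoriented'' variant is needed that is not literally circulant and to which the Fourier trick does not apply without modification.
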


\begin{proof}
Since $A$ is a block matrix, for every $m \ge 1$,
$A^m$ is a block matrix
\[
\begin{pmatrix}
A^{(m)}_{11} & A^{(m)}_{12} & \dots & A^{(m)}_{1r}\\
A^{(m)}_{21} & A^{(m)}_{22} & \dots & A^{(m)}_{2r}\\
\hdotsfor{4}\\
A^{(m)}_{r1} & A^{(m)}_{r2} & \dots & A^{(m)}_{rr}
\end{pmatrix}
\]
where each block has size $s \times s$ and is a sum of
$r^{m-1}$ non-commutative products of $m$ matrices among the blocks
$A_1,A_2,A_3,\dots,A_r.$
That is, each $A^{(m)}_{ij}$ is the sum of $r^{m-1}$ matricial
products of the form $A_{r_1}A_{r_2}\cdots A_{r_m}$ with $r_1,
r_2,\dots, r_m \in \{1,2,\dots,r\}.$

Moreover, since every block $A_i$ appears exactly once in every block
row and every block column it can be proved by induction that every
product of the form $A_{r_1}A_{r_2}\cdots A_{r_m}$ appears exactly
once in every block row and every block column of $A^m.$
Therefore, for every $q \in \{1,2,\dots,r\},$
$\sum_{i=1}^r A^{(m)}_{qi}$ and $\sum_{i=1}^r A^{(m)}_{iq}$ are the
sum of $r^m$ matricial products, and every product
$A_{r_1}A_{r_2}\cdots A_{r_m}$ appears exactly once in each of these
expressions. Hence,
\[
\sum_{i=1}^r A^{(m)}_{qi} =
   \sum_{i=1}^r A^{(m)}_{iq} =
   \left(\sum_{i=1}^r A_i \right)^m .
\]
Consequently, the classical matrix norms satisfy:
\begin{center}
$
\norm{A^m}_\infty = \norm{\left(\sum_{i=1}^r A_i \right)^m}_\infty
$
and
$
\norm{A^m}_1 = \norm{\left(\sum_{i=1}^r A_i \right)^m}_1.
$
\end{center}

Since the spectral radius is defined as: $\rho(A) = \lim_{m\to\infty} \norm{A^m}^{1/m}$ for any matrix
norm, it follows that $\rho(A) = \rho\left(\sum_{i=1}^r A_i \right).$
\end{proof}

\begin{remark}
In fact the above lemma holds for every matrix for which a given
block appears exactly once in every block row and every block column.
\end{remark}

Now we are endowed with the necessary ingredients to prove Theorem~\ref{firstreduction}.

\begin{proof}[Proof of Theorem~\ref{firstreduction}]
It follows from \eqref{monotoneTopEnt} and
Lemmas~\ref{Markovcirculating} and \ref{Spectralcirculating}.
\end{proof}

Next, to complete the first reduction in the computation of
$\htop(\Phi_{P_n^{^+}})$, we will give an explicit formula for the
matrix $\sum_{k=1}^{2n} M_{1k}.$

The \emph{compacted matrix of rank $n$} is the $(2n-1) \times
(2n-1)$ matrix $\mathsf{C}_n = (c_{ij})$ defined by
\begin{equation}
c_{ij} = \begin{cases}
1   & \text{$j = i+1$ and $i \in \{1,2,\dots,n-3\}$},\\
1   & \text{if $j \in \{n-1, n\}$ and $i=n-2$},\\
n-2 & \text{if $j \in \{1,2,\dots, n\}$ and $i=n-1$},\\
n-1 & \text{if $j \in \{n+1,n+2,\dots, 2n-1\}$ and $i=n-1$},\\
1   & \text{if $i=n$},\\
n-1 & \text{if $j \in \{1,2,\dots, n-1\}$ and $i=n+1$},\\
n-2 & \text{if $j \in \{n,n+1,\dots, 2n-1\}$ and $i=n+1$},\\
1   & \text{if $j \in \{n, n+1\}$ and $i=n+2$},\\
1   & \text{$j = i-1$ and $i \in \{n+3, n+4, \dots,2n-1\}$, and}\\
0   & \text{otherwise.}
\end{cases}
\end{equation}
In matrix form, $\mathsf{C}_n$ is
{\fontsize{9}{11}
\[
\setcounter{MaxMatrixCols}{20}\arraycolsep=1.5pt\def\arraystretch{0.5}
\begin{pmatrix}\label{CM-MF}
0 & 1 & 0 & 0 & \cdots & 0 & 0 & 0 & 0 & \cdots & 0 & 0 & 0 \\
0 & 0 & 1 & 0 & \cdots & 0 & 0 & 0 & 0 & \cdots & 0 & 0 & 0 \\
\vdots & \vdots & \vdots & \ddots & \cdots & \vdots &
    \vdots & \vdots & \vdots & \vdots & \vdots & \vdots & \vdots \\
\vdots & \vdots & \vdots & \cdots & \ddots & \vdots &
    \vdots & \vdots & \vdots & \vdots & \vdots & \vdots & \vdots \\
0 & 0 & 0 & \cdots & 0 & 1 & 1 & 0 & 0 & \cdots & 0 & 0 & 0 \\
n-2 & n-2  & n-2 & \cdots & n-2 & n-2 & n-2 & n-1 & n-1 & \cdots & n-1 & n-1 & n-1 \\
1 & 1 & 1 & \cdots & 1 & 1 & 1 & 1 & 1 & \cdots & 1 & 1 & 1 \\
n-1 & n-1 & n-1 & \cdots & n-1 & n-1 & n-2 & n-2 & n-2 & \cdots & n-2 & n-2 & n-2 \\
0 & 0 & 0 & \cdots & 0 & 0 & 1 & 1 & 0 & \cdots & 0 & 0 & 0 \\
\vdots & \vdots & \vdots & \vdots & \vdots & \vdots & \vdots &
             \vdots & \ddots & \cdots  & \vdots & \vdots & \vdots \\
\vdots & \vdots & \vdots & \vdots & \vdots & \vdots & \vdots &
             \vdots & \cdots & \ddots  & \vdots & \vdots & \vdots \\
0 & 0 & 0 & 0 & \cdots & 0 & 0 & 0 & 0 & \cdots & 1 & 0 & 0 \\
0 & 0 & 0 & 0 & \cdots & 0 & 0 & 0 & 0 & \cdots & 0 & 1 & 0 \\
\end{pmatrix}
\]}

\begin{lemma}\label{CompressedMatrix}
\[
\sum_{k=1}^{2n} M_{1k} = \mathsf{C}_n.
\]
\end{lemma}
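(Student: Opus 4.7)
The plan is to verify the matrix identity by a direct case-by-case computation, working entry by entry. The key observation is that the $(i,j)$-entry of $\sum_{k=1}^{2n} M_{1k}$ counts exactly the number of indices $k \in \{1,\dots,2n\}$ such that $U^k_j \subset \Phi_{P_n^{^+}}(U^1_i)$. Equivalently, after decomposing $\Phi_{P_n^{^+}}(U^1_i)$ as a union of $\cS$-basic intervals via \eqref{eq:TheImage}, this entry is the total number of basic intervals in the decomposition that sit at position $j$ inside some $I_{y_k}$, in the sense of the positional labelling of \eqref{MarkovLabelling}.

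First I would substitute $i = 1$ throughout \eqref{eq:TheImage}, computing $\sbmod{1+n+1}{2n} = n+2$ and $\sbmod{1+n-1}{2n} = n$ and spelling out the ``wrap-around'' unions $\bigcup_{k=n+3}^{2n} I_{y_k}$ and $\bigcup_{k=2}^{n-1} I_{y_k}$. Next I would record the translation between the symbolic labels $L^m, C^L, C, C^R, R^m$ for basic intervals inside $I_{y_\ell}$ and their positional indices from \eqref{MarkovLabelling}: $L^m_{y_\ell}$ occupies position $n+1-m$, the intervals $C^L_{y_\ell}, C_{y_\ell}, C^R_{y_\ell}$ occupy positions $n-1, n, n+1$ respectively, and $R^m_{y_\ell}$ occupies position $m+n-1$. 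A full interval $I_{y_k}$ contributes exactly one basic interval at each of the $2n-1$ positions.

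Then I would check the seven cases produced by \eqref{eq:TheImage} one by one and compare each to the corresponding row of $\mathsf{C}_n$. The easy rows are: $i \in \{1, \dots, n-3\}$ (a single basic interval at position $i+1$, matching $c_{i,i+1}=1$); $i = n-2$ (two intervals at positions $n-1$ and $n$); $i = n$, whose image is the whole of $I_{y_1}$ and gives an entire row of $1$'s; and the two analogous ``$R$-type'' rows at $i=n+2$ and $i \in \{n+3, \dots, 2n-1\}$, which are symmetric to the first two cases. The substantial rows are $i = n-1$ and $i = n+1$. For $i = n-1$ the image is $C^R_{y_{n+2}} \cup \bigcup_{j=3}^n R^j_{y_{n+2}} \cup \bigcup_{k=n+3}^{2n} I_{y_k}$, which yields $n-2$ full intervals $I_{y_k}$ plus $n-1$ extra basic intervals occupying precisely the positions $n+1, n+2, \dots, 2n-1$; therefore the entry at position $j$ equals $n-2$ for $j \in \{1,\dots,n\}$ and $n-1$ for $j \in \{n+1,\dots,2n-1\}$, matching row $n-1$ of $\mathsf{C}_n$. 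The symmetric bookkeeping handles row $n+1$.

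The argument is mostly mechanical; the only real subtlety is keeping track of the two wrap-around unions in the third and fifth lines of \eqref{eq:TheImage} and consistently converting each symbolic piece into its positional label so that contributions sharing the same position can be added. Once this is done, every row of $\sum_{k=1}^{2n} M_{1k}$ visibly equals the corresponding row of $\mathsf{C}_n$ directly from the definition of the compacted matrix.
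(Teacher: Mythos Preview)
Your proposal is correct and follows essentially the same approach as the paper: both are direct computations from the image formulae~\eqref{eq:TheImage} together with the positional labelling~\eqref{MarkovLabelling}. The only difference is organizational: the paper first identifies each individual block $M_{1k}$ as one of the auxiliary matrices $\mathbf{T}_{2n-1}$, $\mathbf{U}^{n-1}_{2n-1}$, $\mathbf{U}^{n}_{2n-1}$, $\mathbf{U}^{n+1}_{2n-1}$, $\mathbf{J}_{2n-1}\mathbf{T}_{2n-1}\mathbf{J}_{2n-1}$, $\mathbf{0}_{2n-1}$ and then sums, whereas you compute the sum row by row by counting how many blocks contribute a $1$ at each position~$j$; the bookkeeping is the same.
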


To prove this lemma we need to explicitly describe the matrix
$M_n^{^+}.$ To this end we introduce the following notation.
The zero matrix of size $k \times k$ will be denoted by
$\mathbf{0}_{k},$ and $\mathbf{J}_{k}$ will denote the $k \times k$
$(0,1)-$matrix with ones in the anti-diagonal.
Also, if $i \in \{1,2,\dots,k\},$ $\mathbf{U}^i_{k}$ will denote the
$k \times k$ matrix such that all entries in the $i-$th row are 1
and all other entries are 0.
Finally, for $k \ge 5$ odd, $\mathbf{T}_{k} = (t_{ij})$ is the
$k \times k$ $(0,1)-$matrix such that $t_{ij} = 1$ if and only if (see
Figure~\ref{Blocks} for examples):
\begin{itemize}
 \item $j = i+1$ and $i \in \{1,2,\dots,\widetilde{k}-3\}$, or
 \item $j \in \{\widetilde{k}-1, \widetilde{k}\}$ and $i=\widetilde{k}-2$ or
 \item $\widetilde{k}+1 \le j \le k$ and $i=\widetilde{k}-1,$
\end{itemize}
where $\widetilde{k} = \tfrac{k+1}{2}.$
Observe that (see again Figure~\ref{Blocks})
$\mathbf{J}_{k}\mathbf{T}_{k}\mathbf{J}_{k}$ is the matrix
obtained from $\mathbf{T}_{k}$ by a symmetry with respect to the
central coordinate $t_{\widetilde{k},\widetilde{k}}$.
\begin{figure}
\[
\begin{array}{lr}
\mathbf{U}^3_7 = \begin{pmatrix}
0&0&0&0&0&0&0 \\
0&0&0&0&0&0&0 \\
1&1&1&1&1&1&1 \\
0&0&0&0&0&0&0 \\
0&0&0&0&0&0&0 \\
0&0&0&0&0&0&0 \\
0&0&0&0&0&0&0 \\
\end{pmatrix} & \mathbf{T}_7 = \begin{pmatrix}
0&1&0&0&0&0&0 \\
0&0&1&1&0&0&0 \\
0&0&0&0&1&1&1 \\
0&0&0&0&0&0&0 \\
0&0&0&0&0&0&0 \\
0&0&0&0&0&0&0 \\
0&0&0&0&0&0&0
\end{pmatrix} \\[10ex]
\mathbf{J}_7 = \begin{pmatrix}
0&0&0&0&0&0&1 \\
0&0&0&0&0&1&0 \\
0&0&0&0&1&0&0 \\
0&0&0&1&0&0&0 \\
0&0&1&0&0&0&0 \\
0&1&0&0&0&0&0 \\
1&0&0&0&0&0&0
\end{pmatrix} & \mathbf{J}_7\mathbf{T}_7\mathbf{J}_7 = \begin{pmatrix}
0&0&0&0&0&0&0 \\
0&0&0&0&0&0&0 \\
0&0&0&0&0&0&0 \\
0&0&0&0&0&0&0 \\
1&1&1&0&0&0&0 \\
0&0&0&1&1&0&0 \\
0&0&0&0&0&1&0
\end{pmatrix}
\end{array}\]
\caption{Examples of the matrices
$\mathbf{U}^i_{k}$,
$\mathbf{T}_{k},$
$\mathbf{J}_{k}$ and
$\mathbf{J}_{k}\mathbf{T}_{k}\mathbf{J}_{k}$
with $k = 7$.}\label{Blocks}
\end{figure}

\begin{proof}[Proof of Lemma~\ref{CompressedMatrix}]
From formulae \eqref{eq:TheImage} and taking into account the
labelling of the basic intervals \eqref{MarkovLabelling}
it follows that $m^{lt}_{ij} = 1$ if and only if
(see Figure~\ref{BigMatg2} for an example in the case of rank 4):
{\small\begin{equation*}
\begin{cases}
j = i+1,\ t = \sbmod{l+n+1}{2n} & \text{for $i=1,2,\dots,n-3$},\\[1ex]
j \in \{n-1, n\},\ t = \sbmod{l+n+1}{2n} & \text{for $i=n-2$},\\[1ex]
\left.\text{\Large $
   \begin{subarray}{l}
       n+1 \le j \le 2n-1,\ t = \sbmod{l+n+1}{2n}, \text{ and}\\
       j \in \{1, 2,\dots, 2n-1\},\ \sbmod{l+n+2}{2n} \le t \le
\sbmod{l-1}{2n}
   \end{subarray}$}\right\}                 & \text{for
$i=n-1$},\\[1ex]
j \in \{1,2,\dots,2n-1\},\ t = l & \text{for $i=n$},\\[1ex]
\left.\text{\Large $
   \begin{subarray}{l}
       j \in \{1,2,\dots, n-1\},\ t = \sbmod{l+n-1}{2n}, \text{ and}\\
       j \in \{1, 2,\dots, 2n-1\},\ \sbmod{l+1}{2n} \le t \le
\sbmod{l+n-2}{2n}
   \end{subarray}$}\right\}                 & \text{for
$i=n+1$},\\[1ex]
j \in \{n, n+1\},\ t = \sbmod{l+n-1}{2n} & \text{for $i=n+2$},\\[1ex]
j = i-1,\ t = \sbmod{l+n-1}{2n} & \text{for $i=n+3,\dots,2n-1$}.
\end{cases}
\end{equation*}}
In matrix block form the above formulae become
(see again Figure~\ref{BigMatg2}):
\begin{equation*}
\begin{split}
M_{l,\sbmod{l+n+1}{2n}} &= \mathbf{T}_{2n-1}\\
M_{lt} &= \mathbf{U}^{n-1}_{2n-1} \text{ for $\sbmod{l+n+2}{2n} \le t
\le \sbmod{l-1}{2n}$} \\
M_{ll} &= \mathbf{U}^{n}_{2n-1} \\
M_{lt} &= \mathbf{U}^{n+1}_{2n-1} \text{ for $\sbmod{l+1}{2n} \le t
\le \sbmod{l+n-2}{2n}$} \\
M_{l,\sbmod{l+n-1}{2n}} &=
\mathbf{J}_{2n-1}\mathbf{T}_{2n-1}\mathbf{J}_{2n-1}\\
M_{l,\sbmod{l+n}{2n}} &= \mathbf{0}_{2n-1}
\end{split}
\end{equation*}
for $l \in \{1,2,\dots, 2n\}.$
Consequently,
\begin{align*}
 \sum_{t=1}^n M_{1t} &=
     \mathbf{T}_{2n-1} +
     \mathbf{J}_{2n-1}\mathbf{T}_{2n-1}\mathbf{J}_{2n-1} +
     \mathbf{U}^{n}_{2n-1} +
     (n-2) \left(\mathbf{U}^{n-1}_{2n-1} +
\mathbf{U}^{n+1}_{2n-1}\right)\\
 & = \mathsf{C}_n.
\end{align*}
\end{proof}

The next corollary gives an explicit formula for the entropy in the
orientable case in terms of the spectral radius of a $(2n-1) \times
(2n-1)$ matrix which is a ``compacted'' version of the Markov matrix
$M_n^{^+}$.

\begin{corollary}\label{reductionToCompressed}
\[
 \htop(\Phi_{P_n^{^+}})  =
\log\max\left\{\rho\left(\mathsf{C}_n\right),1\right\}.
\]
\end{corollary}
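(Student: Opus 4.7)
The corollary is an immediate consequence of the two results established just before it, so my plan is simply to chain them together without introducing any new machinery. First I would invoke Theorem~\ref{firstreduction}, which rewrites the topological entropy $\htop(\Phi_{P_n^{^+}})$ as $\log\max\{\rho(\sum_{k=1}^{2n} M_{1k}),1\}$; this is the reduction step that converts a question about a $2n(2n-1)\times 2n(2n-1)$ block circulant Markov matrix into a question about a single $(2n-1)\times(2n-1)$ matrix, namely the first block row sum.

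Then I would substitute the identification provided by Lemma~\ref{CompressedMatrix}, which gives the explicit equality $\sum_{k=1}^{2n} M_{1k} = \mathsf{C}_n$. Applying $\rho$ to both sides and plugging into the entropy formula yields
\[
\htop(\Phi_{P_n^{^+}}) = \log\max\{\rho(\mathsf{C}_n),1\},
\]
which is exactly the claim.

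There is no hard part here: the entire content of the corollary is packaged inside Theorem~\ref{firstreduction} (which itself rests on \eqref{monotoneTopEnt} together with Lemmas~\ref{Markovcirculating} and~\ref{Spectralcirculating}) and Lemma~\ref{CompressedMatrix}. So the proof will be essentially a one-line combination, and I would write it as such, with no computation involved. The real work — verifying the block circulant structure, summing the block row, and matching the result with the combinatorial description of $\mathsf{C}_n$ — has already been carried out in the preceding lemmas.
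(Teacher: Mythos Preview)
Your proposal is correct and matches the paper's own proof exactly: the paper simply writes ``It follows from Theorem~\ref{firstreduction} and Lemma~\ref{CompressedMatrix},'' which is precisely the two-step chaining you describe.
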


\begin{proof}
It follows from  Theorem~\ref{firstreduction} and
Lemma~\ref{CompressedMatrix}.
\end{proof}

\begin{remark}\label{TP:C}
Note that the map $\Phi_{P_n^{^+}}$ commutes with a rigid rotation $R$
of period $2n.$  The quotient space obtained by identifying each
orbit of $R$ to a point is a circle. The map induced by
$\Phi_{P_n^{^+}}$ on this quotient space is also a Markov map. The
matrix $\mathsf{C}_n$ is nothing but the Markov matrix of this
induced map (see \eqref{eq:TheImage} and Figure~\ref{old-fig:4}).
\end{remark}

\section{The non-orientable case}\label{sec:disoriented}
We start this section by extending the definition of \emph{symmetric presentation}
(Definition~\ref{SimPres}) to non orientable surface groups.

\begin{definition}\label{SimPres2}
Given a surface group $\Gamma=\pi_1(S)$ of rank $n$,
where $S$ is a non orientable surface, the following presentation of $\Gamma$ will be called
\emph{symmetric} and denoted by $P_n^{^-}$. Its definition depends on the parity of $n$ as
follows.
For $n$ odd, we define $P_n^{^-}$ as
\[
\present{x_1^{\pm 1}, x_2^{\pm 1}, \dots, x_n^{\pm 1} \left/
        x_1 x_2 \cdots x_n
        x_{n-1}x_{n-2}\cdots x_1x_n \right.}
\]
while, for $n$ even, $P_n^{^-}$ is defined as
\[
\present{x_1^{\pm 1}, x_2^{\pm 1}, \dots, x_n^{\pm 1} \left/
        x_1 x_2 \cdots x_n
        x_{n-1}x_{n-2}\cdots x_1x_n^{-1} \right.}.
\]
\end{definition}

Similar arguments to the ones used in the proof of
Proposition~\ref{old-prop:2.1} yield that the symmetric presentation
$P_n^{^-}$ is minimal and geometric.

As in the orientable case, the nomenclature \emph{symmetric} for the
presentation $P_n^{^-}$ accounts for the fact that, at each vertex,
the cyclic ordering of the generators (Lemma \ref{old-lem:2.2})
exhibits the useful property that the edge opposite to $x$ at any
vertex is simply the edge $x^{-1}$. Indeed, one can check that the
ordering of the generators at any vertex is
\[
  x_1 < x_2^{-1} < x_3 < \dots <
  x_{n-1}< x_n^{-1}<x_1^{-1} < x_2 < x_3^{-1} < \dots <
  x_{n-1}^{-1}< x_n
\]
when $n$ is even, and
\[
  x_1 < x_2^{-1} < x_3 < \dots <
  x_{n-1}^{-1}< x_n < x_1^{-1} < x_2 < x_3^{-1} < \dots <
  x_{n-1}< x_n^{-1}
\]
when $n$ is odd.

The fact that the symmetric presentation has associated the above
cyclic ordering implies that Corollary~\ref{CentralImage} also
holds for the non-orientable case:

\begin{corollary}\label{CentralImage-} Let $P_n^{^-}$ be the symmetric
presentation of a non-orientable surface
group of rank $n.$ Then, $\Phi_{P_n^{^-}}(C_x)=I_x$ for each generator $x$.
\end{corollary}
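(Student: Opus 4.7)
The plan is to mirror the proof of Corollary~\ref{CentralImage} from the orientable case, since the only ingredient needed there was the specific form of the cyclic ordering of generators at a vertex. Recall that Lemma~\ref{old-lem:2.10} applies to any geometric presentation whose relations all have even length: it asserts that $\Phi_{P}(C_x)$ is a single interval $I_u$, where $u$ is the generator opposite to $x^{-1}$ at the vertex $x$ in the cyclic ordering given by Lemma~\ref{old-lem:2.2}(a). The symmetric presentation $P_n^{^-}$ is minimal and geometric (as indicated right after Definition~\ref{SimPres2}), and has a single relation of even length $2n$, so Lemma~\ref{old-lem:2.10} applies.

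First I would observe that, by the $\Gamma$-action, the cyclic ordering of generators at any vertex coincides with the cyclic ordering at the identity, so it suffices to work with the two explicit orderings listed just before the corollary statement. Next I would verify, by direct inspection, that both orderings have the defining \emph{symmetry property}: the generator at position $i+n$ (mod $2n$) is the inverse of the generator at position $i$. For $n$ even this reads off the list $x_1,\,x_2^{-1},\,x_3,\,\dots,\,x_n^{-1},\,x_1^{-1},\,x_2,\,x_3^{-1},\,\dots,\,x_n$ directly: position $i$ and position $i+n$ carry generators that differ only by inversion. For $n$ odd the corresponding list $x_1,\,x_2^{-1},\,\dots,\,x_{n-1}^{-1},\,x_n,\,x_1^{-1},\,x_2,\,\dots,\,x_{n-1},\,x_n^{-1}$ has exactly the same property. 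In particular, for any generator $y$ the generator opposite to $y$ at any vertex is $y^{-1}$.

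Applying this with $y=x^{-1}$ gives that the generator $u$ opposite to $x^{-1}$ is $(x^{-1})^{-1}=x$, so Lemma~\ref{old-lem:2.10} yields $\Phi_{P_n^{^-}}(C_x) = I_x$, which is the statement of Corollary~\ref{CentralImage-}. There is no real obstacle here beyond the verification of the cyclic ordering, which is the content of the paragraphs preceding the corollary; the point of the argument is exactly that the non-orientable symmetric presentation has been designed so that the key property ``opposite generator = inverse generator'' of the orientable symmetric presentation is preserved, and hence the orientable proof carries over verbatim.
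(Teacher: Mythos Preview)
Your proof is correct and follows exactly the approach the paper intends: the corollary is stated immediately after the observation that the cyclic ordering for $P_n^{^-}$ satisfies ``opposite generator $=$ inverse generator,'' and (just as in the orientable case) is presented as an immediate consequence of Lemma~\ref{old-lem:2.10} together with that property. You have simply written out the details that the paper leaves implicit.
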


Notice that map $\Phi_{P_n^{^+}}$ and the Markov matrix $M^+_n$ are
only defined for $n$ even since the group corresponds to an
orientable surface. However, all associated formulae extend to the
case $n$ odd. In this sense below we will compare the maps
$\Phi_{P_n^{^+}}$ and $\Phi_{P_n^{^-}}$ and the associated Markov
matrices $M^+_n$ and $M^-_n,$ independently on the parity of $n.$

Using the notations introduced in the previous sections one can
check that the Markov map $\Phi_{P_n^{^-}}$ behaves essentially as
$\Phi_{P_n^{^+}}$ in all intervals $I_{y_i}$ except when $i \in \{n,
2n\}.$ In these two intervals the map reverses orientation.
So, when $i\notin\{n,2n\}$ the equation~\eqref{eq:TheImage} holds
with $\Phi_{P_n^{^-}}$ instead of $\Phi_{P_n^{^+}}$.
When $i = n,$
\begin{equation}\label{eq:TheImage-}
\begin{split}
\Phi_{P_n^-}(L_{y_n}^{j}) &=     R_{y_{2n-1}}^{j-1}
    \text{for $j \in \{4, 5,\dots, n\}$},\\
\Phi_{P_n^-}(L_{y_n}^{3}) &= C_{y_{2n-1}} \cup
          C_{y_{2n-1}}^{R}, \\
\Phi_{P_n^{^-}}(C_{y_n}^{L}) &=C_{y_{2n-1}}^{L} \cup
   \left( \bigcup_{j=2}^{n} L_{y_{2n-1}}^{j}\right) \cup
   \left( \bigcup_{k=n+1}^{2n-2} I_{y_{k}}\right),\\
\Phi_{P_n^{^-}}(C_{y_n}) &= I_{y_n},\\
\Phi_{P_n^{^-}}(C_{y_n}^{R}) &=
   C_{y_{1}}^{R} \cup
   \left( \bigcup_{j=2}^{n} R_{y_{1}}^{j}\right) \cup
   \left( \bigcup_{k=2}^{n-1} I_{y_{k}}\right),\\
\Phi_{P_n^{^-}}(R_{y_n}^{3}) &=     C_{y_{1}}^{L} \cup
          C_{y_{1}},\\
\Phi_{P_n^{^-}}(R_{y_n}^{j}) &=L_{y_{1}}^{j-1}
    \text{for $j \in \{4, 5,\dots, n\}$},
\end{split}
\end{equation}
and analogous formulae hold for the interval $I_{y_{2n}}$.
Hence, in a similar way to the previous section it follows that the
Markov matrix $M^-_n$ of $\Phi_{P_n^{^-}}$ is of the form
\eqref{MarkovMatrix} with $M_{i,j}$ replaced by
$\mathbf{J}_{2n-1} M_{i,j}$ for $i\in \{n,2n\}$ and $j =
1,2,\dots,2n$ (see Figure~\ref{BigMatrank3}).
\begin{figure}
\fontsize{11}{13}
\def\I{1}
\def\Z{\textcolor{gray}{0}}
\[
\setcounter{MaxMatrixCols}{30}\arraycolsep=1.5pt\def\arraystretch{0.7}
\left(\begin{array}{ccccc|ccccc|ccccc|ccccc|ccccc|ccccc}
\Z&\Z&\Z&\Z&\Z&\Z&\Z&\Z&\Z&\Z&\Z&\Z&\Z&\Z&\Z&\Z&\Z&\Z&\Z&\Z&\Z&\I&\I&\Z&\Z&\Z&\Z&\Z&\Z&\Z\\
\Z&\Z&\Z&\Z&\Z&\Z&\Z&\Z&\Z&\Z&\Z&\Z&\Z&\Z&\Z&\Z&\Z&\Z&\Z&\Z&\Z&\Z&\Z&\I&\I&\I&\I&\I&\I&\I\\
\I&\I&\I&\I&\I&\Z&\Z&\Z&\Z&\Z&\Z&\Z&\Z&\Z&\Z&\Z&\Z&\Z&\Z&\Z&\Z&\Z&\Z&\Z&\Z&\Z&\Z&\Z&\Z&\Z\\
\Z&\Z&\Z&\Z&\Z&\I&\I&\I&\I&\I&\I&\I&\Z&\Z&\Z&\Z&\Z&\Z&\Z&\Z&\Z&\Z&\Z&\Z&\Z&\Z&\Z&\Z&\Z&\Z\\
\Z&\Z&\Z&\Z&\Z&\Z&\Z&\Z&\Z&\Z&\Z&\Z&\I&\I&\Z&\Z&\Z&\Z&\Z&\Z&\Z&\Z&\Z&\Z&\Z&\Z&\Z&\Z&\Z&\Z\\
\hline\rule{0pt}{8pt}
\Z&\Z&\Z&\Z&\Z&\Z&\Z&\Z&\Z&\Z&\Z&\Z&\Z&\Z&\Z&\Z&\Z&\Z&\Z&\Z&\Z&\Z&\Z&\Z&\Z&\Z&\I&\I&\Z&\Z\\
\I&\I&\I&\I&\I&\Z&\Z&\Z&\Z&\Z&\Z&\Z&\Z&\Z&\Z&\Z&\Z&\Z&\Z&\Z&\Z&\Z&\Z&\Z&\Z&\Z&\Z&\Z&\I&\I\\
\Z&\Z&\Z&\Z&\Z&\I&\I&\I&\I&\I&\Z&\Z&\Z&\Z&\Z&\Z&\Z&\Z&\Z&\Z&\Z&\Z&\Z&\Z&\Z&\Z&\Z&\Z&\Z&\Z\\
\Z&\Z&\Z&\Z&\Z&\Z&\Z&\Z&\Z&\Z&\I&\I&\I&\I&\I&\I&\I&\Z&\Z&\Z&\Z&\Z&\Z&\Z&\Z&\Z&\Z&\Z&\Z&\Z\\
\Z&\Z&\Z&\Z&\Z&\Z&\Z&\Z&\Z&\Z&\Z&\Z&\Z&\Z&\Z&\Z&\Z&\I&\I&\Z&\Z&\Z&\Z&\Z&\Z&\Z&\Z&\Z&\Z&\Z\\
\hline\rule{0pt}{8pt}
\Z&\Z&\Z&\Z&\Z&\Z&\Z&\Z&\Z&\Z&\Z&\Z&\Z&\Z&\Z&\Z&\Z&\Z&\Z&\Z&\Z&\Z&\I&\I&\Z&\Z&\Z&\Z&\Z&\Z\\
\Z&\Z&\Z&\Z&\Z&\Z&\Z&\Z&\Z&\Z&\Z&\Z&\Z&\Z&\Z&\I&\I&\I&\I&\I&\I&\I&\Z&\Z&\Z&\Z&\Z&\Z&\Z&\Z\\
\Z&\Z&\Z&\Z&\Z&\Z&\Z&\Z&\Z&\Z&\I&\I&\I&\I&\I&\Z&\Z&\Z&\Z&\Z&\Z&\Z&\Z&\Z&\Z&\Z&\Z&\Z&\Z&\Z\\
\Z&\Z&\I&\I&\Z&\I&\I&\I&\I&\I&\Z&\Z&\Z&\Z&\Z&\Z&\Z&\Z&\Z&\Z&\Z&\Z&\Z&\Z&\Z&\Z&\Z&\Z&\Z&\Z\\
\I&\I&\Z&\Z&\Z&\Z&\Z&\Z&\Z&\Z&\Z&\Z&\Z&\Z&\Z&\Z&\Z&\Z&\Z&\Z&\Z&\Z&\Z&\Z&\Z&\Z&\Z&\Z&\Z&\Z\\
\end{array}\right)
\]
\caption{The first three (of the total of six) block rows of the Markov matrix
$M_{P_3^{^-}}$ corresponding to the symmetric presentation of a non-orientable surface group of rank 3.}\label{BigMatrank3}
\end{figure}

Next we want to prove Lemma~\ref{lem:discirculant} which is an
analogue of Lemma~\ref{Spectralcirculating} for this case.
This will allow us to simplify the computation of the spectral radius
of $M^-_n$.
As expected, a further consequence of Lemmas~\ref{lem:discirculant}
and \ref{Spectralcirculating} will be that the orientation-reversing
character of $\Phi_{P_n^{^-}}$ in the intervals $I_{y_n}$ and
$I_{y_{2n}}$ has no effects in the entropy. To do this it is
convenient to introduce the notion of \emph{disoriented block
circulant matrix} as follows.
An \emph{$(r,s)-$disoriented block circulant matrix} is a matrix of
the form
\[
A = \begin{pmatrix}
  A_{11} & A_{12} & \dots & A_{1r}\\
  A_{21} & A_{22} & \dots & A_{2r}\\
  \hdotsfor{4}\\
  A_{r1} & A_{r2} & \dots & A_{rr}
\end{pmatrix}
\]
where each $A_{ij}$ is an $s \times s$ matrix for which there exists
an $(r,s)-$block circulant matrix
\[
\widetilde{A} = \begin{pmatrix}
  A_{11} & A_{12} & \dots & A_{1r}\\
  \widetilde{A}_{21} & \widetilde{A}_{22} & \dots & \widetilde{A}_{2r}\\
  \hdotsfor{4}\\
  \widetilde{A}_{r1} & \widetilde{A}_{r2} & \dots & \widetilde{A}_{rr}
\end{pmatrix}
\]
such that given $i\in \{2,\dots, r\}$, either
$A_{ij} = \widetilde{A}_{ij}$ for every $j=1,2,\dots,r$ or
$A_{ij} = \mathbf{J}_s \widetilde{A}_{ij}$ for every $j=1,2,\dots,r.$
That is, every block row of $A$ coincides with the
corresponding block row of $\widetilde{A}$ or is obtained from
the corresponding block row of $\widetilde{A}$ by
pre-multiplying each block by $\mathbf{J}_s.$
Observe that this last operation permutes the individual rows of the
block row symmetrically with respect to the central horizontal axis.
The matrix $\widetilde{A}$ will be called the
\emph{parallelization of $A$}.
Observe that the assumption that the first block row of $A$ and
$\widetilde{A}$ coincide implies that the parallelization of $A$ is
unique.

\begin{lemma}\label{lem:discirculant}
Let
\[
A = \begin{pmatrix}
  A_{11} & A_{12} & \dots & A_{1r}\\
  A_{21} & A_{22} & \dots & A_{2r}\\
  \hdotsfor{4}\\
  A_{r1} & A_{r2} & \dots & A_{rr}
\end{pmatrix}
\]
be a non-negative disoriented $(r,s)-$block circulant matrix
such that
\begin{equation}\label{eq:discircHyp}
  \left( \sum_{j=1}^r A_{1j} \right) \mathbf{J}_s  =
  \mathbf{J}_s \left( \sum_{j=1}^r A_{1j} \right)\ .
\end{equation}
Then
\[
 \rho(A) = \rho\left(\sum_{j=1}^r A_{1j} \right).
\]
\end{lemma}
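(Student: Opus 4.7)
The plan is to compute $\rho(A)$ through a matrix norm, following the strategy of the proof of Lemma~\ref{Spectralcirculating} but adapted to the disoriented setting. Concretely, we will show that $\norm{A^m}_\infty = \norm{B^m}_\infty$ for every $m \ge 1$, where $B := \sum_{j=1}^r A_{1j}$; combined with Gelfand's identity $\rho(M) = \lim_m \norm{M^m}_\infty^{1/m}$, this yields $\rho(A) = \rho(B)$. Since $A$ and $B$ are non-negative, the $\infty$-norm of their $m$-th powers equals the largest entry of the vectors $A^m \mathbf{1}_{rs}$ and $B^m \mathbf{1}_s$ respectively, where $\mathbf{1}_k$ denotes the all-ones column vector of size $k$. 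So everything reduces to controlling $A^m \mathbf{1}_{rs}$ block by block.

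Let $\widetilde{A}$ be the parallelization of $A$ and, for each $p\in\{1,\dots,r\}$, let $E_p \in \{I_s, \mathbf{J}_s\}$ be the matrix such that the $p$-th block row of $A$ is obtained by left-multiplying the corresponding block row of $\widetilde{A}$ by $E_p$; thus $A_{pq} = E_p \widetilde{A}_{pq}$ for all $p,q$, and $E_1 = I_s$ by construction. Since $\widetilde{A}$ is block circulant, matrix addition is commutative and every block row sum of $\widetilde{A}$ coincides with $B$; factoring $E_p$ yields $\sum_q A_{pq} = E_p B$ for every $p$. This is the base case of the following claim, proved by induction on $m$: for every $p$, the $p$-th block of $A^m \mathbf{1}_{rs}$ equals $E_p B^m \mathbf{1}_s$.

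For the inductive step, write $A^{m+1} = A \cdot A^m$ and apply the inductive hypothesis block by block to obtain
\[
  (A^{m+1} \mathbf{1}_{rs})_p = \sum_t A_{pt}\,(A^m \mathbf{1}_{rs})_t = \sum_t A_{pt}\, E_t B^m \mathbf{1}_s.
\]
Two observations close the induction. First, $\mathbf{J}_s \mathbf{1}_s = \mathbf{1}_s$, so $E_t \mathbf{1}_s = \mathbf{1}_s$ for every $t$. Second, hypothesis~\eqref{eq:discircHyp} forces $B$ to commute with $\mathbf{J}_s$, and hence $B^m$ to commute with every $E_t$. Combining these, $E_t B^m \mathbf{1}_s = B^m E_t \mathbf{1}_s = B^m \mathbf{1}_s$, so the sum collapses to $\bigl(\sum_t A_{pt}\bigr) B^m \mathbf{1}_s = E_p B^{m+1} \mathbf{1}_s$, completing the induction.

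With the claim established, the conclusion is immediate: each $E_p$ is a permutation matrix and so preserves the $\infty$-norm of vectors, whence
\[
  \norm{A^m}_\infty = \max_p \norm{E_p B^m \mathbf{1}_s}_\infty = \norm{B^m \mathbf{1}_s}_\infty = \norm{B^m}_\infty,
\]
and taking $m$-th roots gives $\rho(A) = \rho(B)$. The main obstacle is precisely the commutation step in the inductive argument: without hypothesis~\eqref{eq:discircHyp}, the mixed expression $\sum_t A_{pt} E_t B^m \mathbf{1}_s$ cannot be simplified, and the argument of Lemma~\ref{Spectralcirculating} (which implicitly takes all $E_p = I_s$) does not extend. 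The assumption that $B$ commutes with $\mathbf{J}_s$ is exactly what is needed to make the disoriented reflections invisible to the iteration.
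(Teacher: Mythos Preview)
Your proof is correct and follows essentially the same strategy as the paper: both compute $\norm{A^m}_\infty$ by tracking the block row sums applied to the all-ones vector, using hypothesis~\eqref{eq:discircHyp} together with $\mathbf{J}_s \mathbf{1}_s = \mathbf{1}_s$ to make the reflections disappear from the iteration. Your presentation is in fact more streamlined---the paper routes through the parallelization $\widetilde{A}$ and establishes two separate inductive statements (\eqref{eq:discircWeakHypArbm} and \eqref{eq:elquecalprovar}) before appealing to Lemma~\ref{Spectralcirculating}, whereas your single induction on $(A^m\mathbf{1}_{rs})_p = E_p B^m \mathbf{1}_s$ reaches $\norm{A^m}_\infty = \norm{B^m}_\infty$ directly.
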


\begin{proof}
Let
\[
\widetilde{A} = \begin{pmatrix}
  \widetilde{A}_{11} & \widetilde{A}_{12} & \dots & \widetilde{A}_{1r}\\
  \widetilde{A}_{21} & \widetilde{A}_{22} & \dots & \widetilde{A}_{2r}\\
  \hdotsfor{4}\\
  \widetilde{A}_{r1} & \widetilde{A}_{r2} & \dots & \widetilde{A}_{rr}
\end{pmatrix}
\]
be the unique parallelization of $A.$
We will prove that $\norm{A^m}_{\infty} =
\norm{\widetilde{A}^m}_{\infty}$ for
every $m\ge 0$. Then,
\[
\rho(A) =
  \lim_{m\to\infty} \norm{A^m}_{\infty}^{1/m} =
  \lim_{m\to\infty} \norm{\widetilde{A}^m}_{\infty}^{1/m} =
  \rho(\widetilde{A})\ ,
\]
and the result follows from Lemma~\ref{Spectralcirculating}.

For every $m \in \N$ we will write $A^m$ and
$\widetilde{A}^m$ as
\[
\begin{pmatrix}
A^{(m)}_{11} & A^{(m)}_{12} & \dots & A^{(m)}_{1r}\\
A^{(m)}_{21} & A^{(m)}_{22} & \dots & A^{(m)}_{2r}\\
\hdotsfor{4}\\
A^{(m)}_{r1} & A^{(m)}_{r2} & \dots & A^{(m)}_{rr}
\end{pmatrix}
\qquad\text{and}\qquad
\begin{pmatrix}
  \widetilde{A}^{(m)}_{11} & \widetilde{A}^{(m)}_{12} & \dots & \widetilde{A}^{(m)}_{1r}\\
  \widetilde{A}^{(m)}_{21} & \widetilde{A}^{(m)}_{22} & \dots & \widetilde{A}^{(m)}_{2r}\\
  \hdotsfor{4}\\
  \widetilde{A}^{(m)}_{r1} & \widetilde{A}^{(m)}_{r2} & \dots & \widetilde{A}^{(m)}_{rr}
\end{pmatrix}\ ,
\]
respectively. Then,
\begin{align*}
\norm{A^m}_{\infty} &= \max_{i=1,2,\dots,r} \Big\lVert \sum_{j=1}^r A^{(m)}_{ij} \Big\rVert_{\infty}
                     = \max_{i=1,2,\dots,r} \left( \sum_{j=1}^r A^{(m)}_{ij} \right) \mathbf{u}_s
\text{ and, similarly,}\\
\norm{\widetilde{A}^m}_{\infty} &= \max_{i=1,2,\dots,r} \left( \sum_{j=1}^r \widetilde{A}^{(m)}_{ij} \right) \mathbf{u}_s\ ,
\end{align*}
where $\mathbf{u}_s$ denotes the (column) vector of size $s$ with all
the entries equal to 1.
So, to prove the lemma it is enough to show that
\begin{equation}\label{eq:elquecalprovar}
  \left( \sum_{j=1}^r A^{(m)}_{ij} \right) \mathbf{u}_s =
  \left( \sum_{j=1}^r \widetilde{A}^{(m)}_{ij} \right) \mathbf{u}_s
\end{equation}
for every $i=1,2,\dots,r$ and $m \in \N.$

Before proving this claim we will prove two necessary technical
results on the block rows of the matrix $A$ that are
stronger versions of assumption \eqref{eq:discircHyp}.
The first one is the following:
\begin{equation}\label{eq:discircHypArb}
  \left( \sum_{j=1}^r A_{ij} \right) \mathbf{J}_s = \mathbf{J}_s \left( \sum_{j=1}^r A_{ij} \right)
\text{ and }
  \left( \sum_{j=1}^r A_{ij} \right) \mathbf{w}_s = \left( \sum_{j=1}^r A_{rj} \right) \mathbf{w}_s
\end{equation}
for every non-negative vector $\mathbf{w}_s$ of size $s$ such that
$\mathbf{J}_s \mathbf{w}_s = \mathbf{w}_s$
and every $i, r \in \{1,2,\dots,r\}.$

From the definitions of disoriented block circulant matrix,
parallelization and block circulant matrix we get:
\begin{equation}\label{eq:discircdef}
  \sum_{j=1}^r A_{1j} = \sum_{j=1}^r \widetilde{A}_{1j} = \sum_{j=1}^r \widetilde{A}_{ij}
\end{equation}
and $\sum_{j=1}^r A_{ij}$ is either
$\sum_{j=1}^r \widetilde{A}_{ij}$ or
$\mathbf{J}_s \left( \sum_{j=1}^r \widetilde{A}_{ij} \right).$
Thus, from \eqref{eq:discircdef} and \eqref{eq:discircHyp} we
get, respectively,
\[
\sum_{j=1}^r A_{ij} = \begin{cases}
  & \sum_{j=1}^r \widetilde{A}_{ij} = \sum_{j=1}^r A_{1j} \\
  & \mathbf{J}_s \left( \sum_{j=1}^r \widetilde{A}_{ij} \right) =
        \mathbf{J}_s \left( \sum_{j=1}^r A_{1j} \right) =
        \left( \sum_{j=1}^r A_{1j} \right) \mathbf{J}_s.
\end{cases}
\]
Consequently, if $\mathbf{w}_s$ is a non-negative vector of size $s$
such that $\mathbf{J}_s \mathbf{w}_s = \mathbf{w}_s,$ we
have
$
 \left( \sum_{j=1}^r A_{ij} \right) \mathbf{w}_s =
 \left( \sum_{j=1}^r A_{1j} \right) \mathbf{w}_s
$
for every $i \in \{1,2,\dots,r\}.$ This proves the second equality
of \eqref{eq:discircHypArb}. To prove the first one we use the above
expression for $\sum_{j=1}^r A_{ij}$ and \eqref{eq:discircHyp}.
In the first case we have
\[\textstyle
   \left( \sum_{j=1}^r A_{ij} \right) \mathbf{J}_s =
   \left( \sum_{j=1}^r A_{1j} \right) \mathbf{J}_s =
   \mathbf{J}_s \left( \sum_{j=1}^r A_{1j} \right) =
   \mathbf{J}_s \left( \sum_{j=1}^r A_{ij} \right),
\]
and in the second one,
\[\textstyle
   \left( \sum_{j=1}^r A_{ij} \right) \mathbf{J}_s =
   \mathbf{J}_s \left( \sum_{j=1}^r A_{1j} \right) \mathbf{J}_s =
   \mathbf{J}_s \mathbf{J}_s \left( \sum_{j=1}^r A_{1j} \right) =
   \mathbf{J}_s\left( \sum_{j=1}^r A_{ij} \right).
\]
This ends the proof of \eqref{eq:discircHypArb}.

The second technical result that we need is a weaker version of
\eqref{eq:discircHypArb} but that holds for all powers of $A$.
More precisely, for every $i,r \in \{1,2,\dots,r\}$ and $m \in \N,$
\begin{equation}\label{eq:discircWeakHypArbm}
\mathbf{J}_s \left( \sum_{j=1}^r A^{(m)}_{ij} \right) \mathbf{u}_s =
             \left( \sum_{j=1}^r A^{(m)}_{rj} \right) \mathbf{u}_s\ .
\end{equation}
In fact this implies that
$
\left( \sum_{j=1}^r A^{(m)}_{\ell{}j} \right) \mathbf{u}_s
$
is a vector independent on $\ell,$ call it $\mathbf{w}^m_s,$
such that $\mathbf{J}_s \mathbf{w}^m_s = \mathbf{w}^m_s$.

For $m = 1,$ \eqref{eq:discircWeakHypArbm} follows directly from
\eqref{eq:discircHypArb} and the fact that
$\mathbf{J}_s \mathbf{u}_s = \mathbf{u}_s$.
Now we assume that \eqref{eq:discircWeakHypArbm} holds for $m \ge 1$
and prove it for $m+1$.
Clearly,
\begin{equation}\label{eq:powerm}
  \sum_{j=1}^r A^{(m+1)}_{ij} =
  \sum_{j=1}^r \left( \sum_{\ell=1}^r A_{i\ell} A^{(m)}_{\ell{}j} \right) =
  \sum_{\ell=1}^r A_{i\ell} \left( \sum_{j=1}^r A^{(m)}_{\ell{}j} \right)
\end{equation}
for every $i \in \{1,2,\dots,r\}.$
Thus, from the induction hypothesis and \eqref{eq:discircHypArb},
\begin{align*}\textstyle
\mathbf{J}_s \left( \sum_{j=1}^r A^{(m+1)}_{ij} \right) \mathbf{u}_s
   &= \textstyle
      \mathbf{J}_s \sum_{\ell=1}^r A_{i\ell} \left( \sum_{j=1}^r A^{(m)}_{\ell{}j} \right) \mathbf{u}_s =
      \mathbf{J}_s  \big( \sum_{\ell=1}^r A_{i\ell} \big) \mathbf{w}^m_s \\
   &= \textstyle
      \big( \sum_{\ell=1}^r A_{i\ell} \big) \mathbf{J}_s \mathbf{w}^m_s =
      \big( \sum_{\ell=1}^r A_{i\ell} \big) \mathbf{w}^m_s \\
   &= \textstyle
      \big( \sum_{\ell=1}^r A_{r\ell} \big) \mathbf{w}^m_s =
      \sum_{\ell=1}^r A_{r\ell} \left( \sum_{j=1}^r A^{(m)}_{\ell{}j} \right) \mathbf{u}_s \\
   &= \textstyle
      \left( \sum_{j=1}^r A^{(m+1)}_{rj} \right) \mathbf{u}_s.
\end{align*}
This completes the induction step and, thus,
\eqref{eq:discircWeakHypArbm} is proved.

Now we will prove formula \eqref{eq:elquecalprovar} by induction on
$m$ for a fixed but arbitrary  $i \in \{1,2,\dots,r\}.$
Assume that $m = 1.$ If $A_{ij} = \widetilde{A}_{ij}$ for
$j=1,2,\dots,r$ then the equality is trivially true.
Otherwise,
$A_{ij} = \mathbf{J}_s \widetilde{A}_{ij}$ for $j=1,2,\dots,r.$
Hence, from \eqref{eq:discircHypArb},
\begin{align*}\textstyle
\left( \sum_{j=1}^r A_{ij} \right) \mathbf{u}_s
   &= \textstyle
      \left( \sum_{j=1}^r A_{ij} \right) \left( \mathbf{J}_s \mathbf{u}_s \right) =
      \mathbf{J}_s \left( \sum_{j=1}^r A_{ij} \right) \mathbf{u}_s \\
   &= \textstyle
      \mathbf{J}_s \left( \sum_{j=1}^r \mathbf{J}_s \widetilde{A}_{ij} \right) \mathbf{u}_s =
      \left( \sum_{j=1}^r \widetilde{A}_{ij} \right) \mathbf{u}_s,
\end{align*}
because $\mathbf{J}_s$ is an involution
(i.e. $\mathbf{J}^2_s$ is the identity of size $s$).

Assume that \eqref{eq:elquecalprovar} holds for $m \ge 1.$
As above, we will consider two cases.
If $A_{i\ell} = \widetilde{A}_{i\ell}$ for $\ell=1,2,\dots,r,$ then
from \eqref{eq:powerm}, the analogous formula for $\widetilde{A}$ and
the induction assumption we get
\begin{align*}\textstyle
\left( \sum_{j=1}^r A^{(m+1)}_{ij} \right) \mathbf{u}_s
   &= \textstyle
      \sum_{\ell=1}^r A_{i\ell} \left( \sum_{j=1}^r A^{(m)}_{\ell{}j} \right) \mathbf{u}_s \\
   &= \textstyle
      \sum_{\ell=1}^r \widetilde{A}_{i\ell} \left( \sum_{j=1}^r \widetilde{A}^{(m)}_{\ell{}j} \right) \mathbf{u}_s =
      \left( \sum_{j=1}^r \widetilde{A}^{(m+1)}_{ij} \right)
\mathbf{u}_s.
\end{align*}
So, we are left with the case
$A_{i\ell} = \mathbf{J}_s \widetilde{A}_{i\ell}$ for $\ell=1,2,\dots,r.$
In a similar way to the previous case we get
\begin{align*}\textstyle
\left( \sum_{j=1}^r A^{(m+1)}_{ij} \right) \mathbf{u}_s
   &= \textstyle
      \mathbf{J}_s \sum_{\ell=1}^r \widetilde{A}_{i\ell} \left( \sum_{j=1}^r \widetilde{A}^{(m)}_{\ell{}j} \right) \mathbf{u}_s \\
   &= \textstyle
      \mathbf{J}_s \left( \sum_{j=1}^r \widetilde{A}^{(m+1)}_{ij} \right) \mathbf{u}_s.
\end{align*}
Consequently, from \eqref{eq:discircWeakHypArbm} we get:
\[\textstyle
  \left( \sum_{j=1}^r A^{(m+1)}_{ij} \right) \mathbf{u}_s =
  \mathbf{J}_s \left( \sum_{j=1}^r A^{(m+1)}_{ij} \right) \mathbf{u}_s =
  \left( \sum_{j=1}^r \widetilde{A}^{(m+1)}_{ij} \right) \mathbf{u}_s.
\]
This ends the proof of the lemma.
\end{proof}

With the help of Lemma~\ref{lem:discirculant}, as in the
orientable case we obtain:
\begin{corollary}\label{reductionToCompressedisorient}
\[
 \htop(\Phi_{P_n^{^-}})  =
\log\max\left\{\rho\left(\mathsf{C}_n\right),1\right\}.
\]
\end{corollary}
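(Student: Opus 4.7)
The strategy is to mirror the orientable proof structure but route it through the new disoriented block circulant machinery in Lemma~\ref{lem:discirculant}. The plan is as follows.

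First, I would argue that formula \eqref{monotoneTopEnt} is applicable to $\Phi_{P_n^{^-}}$. Indeed, Lemma~\ref{old-lem:2.9} already supplies the Markov partition $\cS$, and the discussion preceding \eqref{eq:TheImage-} shows that $\Phi_{P_n^{^-}}$ is monotone on each basic interval: it is orientation preserving on every $I_{y_i}$ with $i \notin \{n,2n\}$ and orientation reversing on the two remaining intervals $I_{y_n}$ and $I_{y_{2n}}$. Since each $I_{y_i}$ is itself a union of basic intervals, the restriction of $\Phi_{P_n^{^-}}$ to any basic interval is a homeomorphism, and \eqref{monotoneTopEnt} gives
\[
\htop(\Phi_{P_n^{^-}}) = \log\max\{\rho(M_n^-),1\}.
\]

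Next, I would verify that $M_n^-$ is a non-negative disoriented $(2n,2n-1)$-block circulant matrix whose parallelization is the block circulant matrix $M_n^+$ (using the extension of the formulae \eqref{eq:TheImage} to all parities of $n$ discussed in the text). This is immediate from the description just after \eqref{eq:TheImage-}: outside the rows $i\in\{n,2n\}$ the block row of $M_n^-$ agrees with that of $M_n^+$, while for $i\in\{n,2n\}$ each block of the row is pre-multiplied by $\mathbf{J}_{2n-1}$. In particular the first block row of $M_n^-$ coincides with the first block row of $M_n^+$, so by Lemma~\ref{CompressedMatrix}
\[
\sum_{j=1}^{2n} M^-_{1j} = \sum_{j=1}^{2n} M_{1j} = \mathsf{C}_n.
\]

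To apply Lemma~\ref{lem:discirculant} it remains to check the commutation hypothesis $\mathsf{C}_n \mathbf{J}_{2n-1} = \mathbf{J}_{2n-1}\mathsf{C}_n$, which is equivalent to $\mathbf{J}_{2n-1}\mathsf{C}_n\mathbf{J}_{2n-1}=\mathsf{C}_n$, i.e.\ to invariance of $\mathsf{C}_n$ under central symmetry of its entries around the position $(n,n)$. This is a direct inspection of the explicit block form of $\mathsf{C}_n$ displayed right before Lemma~\ref{CompressedMatrix}: row $n$ is the all-ones row (fixed by reversal), rows $n-1$ and $n+1$ are reverses of each other (the constants $n-2$ and $n-1$ swap sides), and the upper-left ``$\mathbf{T}$-block'' and lower-right ``$\mathbf{J}\mathbf{T}\mathbf{J}$-block'' are, by construction, each other's centrally symmetric images, as are the entries $c_{n-2,n-1}, c_{n-2,n}$ and $c_{n+2,n}, c_{n+2,n+1}$. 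I expect this verification to be the most tedious step, but it is a finite combinatorial check on the explicit matrix.

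Granted that, Lemma~\ref{lem:discirculant} yields
\[
\rho(M_n^-) = \rho\!\left(\sum_{j=1}^{2n} M^-_{1j}\right) = \rho(\mathsf{C}_n),
\]
and combining with the first step gives
\[
\htop(\Phi_{P_n^{^-}}) = \log\max\{\rho(\mathsf{C}_n),1\},
\]
as required. The main obstacle is really the bookkeeping for the commutation $\mathsf{C}_n \mathbf{J}_{2n-1} = \mathbf{J}_{2n-1}\mathsf{C}_n$; everything else is a direct transposition of the orientable argument to the disoriented setting supplied by Lemma~\ref{lem:discirculant}.
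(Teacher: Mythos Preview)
Your proposal is correct and follows exactly the route the paper intends: the paper's own proof is the single sentence ``With the help of Lemma~\ref{lem:discirculant}, as in the orientable case we obtain'', and what you have written is precisely the unpacking of that sentence --- applying \eqref{monotoneTopEnt}, identifying $M_n^-$ as a disoriented block circulant matrix with parallelization $M_n^+$, invoking Lemma~\ref{CompressedMatrix} for the first block row sum, checking the central symmetry $\mathbf{J}_{2n-1}\mathsf{C}_n\mathbf{J}_{2n-1}=\mathsf{C}_n$, and then applying Lemma~\ref{lem:discirculant}.
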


\section{Computation of the topological entropy --- second reduction:
Super compacting the matrix $\mathsf{C}_n$}\label{sec:CtoSC}

The \emph{super compacted matrix of rank $n$} is the $n \times
n$ matrix $\mathsf{SC}_n = (s_{ij})$ defined as follows:
\begin{equation}
s_{ij} = \begin{cases}
1 & \text{if $i \le n-2$ and $j = i+1$ or $i = n$,}\\
2 & \text{if $i = n-2$ and $j = n$,}\\
2n-3 & \text{if $i = n-1$ and $j < n$,}\\
2n-4 & \text{if $i = n-1$ and $j = n$, and}\\
0 & \text{otherwise.}
\end{cases}
\end{equation}
In matrix form we have:
\[
\mathsf{SC}_n = \begin{pmatrix}
0 & 1 & 0 & 0 & \cdots & 0 & 0 & 0\\
0 & 0 & 1 & 0 & \cdots & 0 & 0 & 0\\
\vdots & \vdots & \vdots & \ddots & \cdots & \vdots & \vdots & \vdots\\
\vdots & \vdots & \vdots & \cdots & \ddots & \vdots & \vdots & \vdots\\
0 & 0 & 0 & 0 & \cdots & 1 & 0 & 0\\
0 & 0 & 0 & 0 & \cdots & 0 & 1 & 2\\
2n-3 & 2n-3 & 2n-3 & 2n-3 & \cdots & 2n-3 & 2n-3 & 2n-4\\
1 & 1 & 1 & 1 & \cdots & 1 & 1 & 1
\end{pmatrix}
\]

In this section we prove
\begin{proposition}\label{reductionToSuperCompressed}
For every $n\ge 3,$
\[
 \max\left\{\rho\left(\mathsf{C}_n\right),1\right\} =
 \max\left\{\rho\left(\mathsf{SC}_n\right),1\right\}.
\]
\end{proposition}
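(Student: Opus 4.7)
The plan is to exploit a reflection symmetry of $\mathsf{C}_n$ to decompose it into two invariant blocks, and to identify the larger block with $\mathsf{SC}_n.$ My first step is to verify, by direct inspection of the block description of $\mathsf{C}_n,$ that
\[
  \mathbf{J}_{2n-1}\,\mathsf{C}_n\,\mathbf{J}_{2n-1} = \mathsf{C}_n.
\]
Rows $1,\dots,n-3$ (the forward shift rows) get swapped with rows $n+3,\dots,2n-1$ (the backward shift rows); row $n-2$ is exchanged with row $n+2;$ the central row $n$ is all ones and is fixed; and rows $n-1$ and $n+1$ are exchanged because the two blocks of constant entries $n-2$ and $n-1$ are interchanged under $j \mapsto 2n-j.$ Since $\mathbf{J}_{2n-1}$ is an involution, this produces a $\mathsf{C}_n$-invariant splitting $\mathbb{R}^{2n-1} = V^+ \oplus V^-$ into the $\pm 1$-eigenspaces of $\mathbf{J}_{2n-1};$ the space $V^+$ has dimension $n$ and consists of vectors that are symmetric about the central coordinate.

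Next I would define the linear isomorphism $T\colon \mathbb{R}^n \longrightarrow V^+$ by
\[
  (Tw)_i = w_i \text{ for } 1 \le i \le n-1,\quad (Tw)_n = 2\,w_n,\quad (Tw)_{n+k} = w_{n-k} \text{ for } 1 \le k \le n-1,
\]
and verify row by row the conjugation identity $\mathsf{C}_n T = T\, \mathsf{SC}_n.$ The cases $i \le n-3$ and $i = n-2$ are immediate from the shift structure of $\mathsf{C}_n.$ The critical case $i = n-1$ uses $\sum_{j=n+1}^{2n-1}(Tw)_j = \sum_{j=1}^{n-1} w_j$ and the definition of $(Tw)_n$ to give
\[
  (\mathsf{C}_n Tw)_{n-1} = (n-2)\Big(\sum_{j=1}^{n-1} w_j + 2 w_n\Big) + (n-1)\sum_{j=1}^{n-1} w_j = (2n-3)\sum_{j=1}^{n-1} w_j + (2n-4) w_n,
\]
matching the $(n-1)$-th row of $\mathsf{SC}_n;$ it is precisely the factor $2$ hidden in $(Tw)_n$ that converts the contribution $(n-2) v_n$ into $(2n-4) w_n.$ The case $i = n$ is analogous: since row $n$ of $\mathsf{C}_n$ is all ones, $(\mathsf{C}_n Tw)_n = 2\sum_{j=1}^n w_j = 2(\mathsf{SC}_n w)_n = (T\,\mathsf{SC}_n w)_n.$ The remaining rows $i > n$ are forced by the $\mathbf{J}_{2n-1}$-invariance of $V^+.$ Hence $T$ conjugates $\mathsf{SC}_n$ to the restriction of $\mathsf{C}_n$ to $V^+,$ so in particular these two operators share characteristic polynomial and spectrum.

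The last step upgrades this similarity into equality of spectral radii. By Perron-Frobenius the non-negative matrix $\mathsf{C}_n$ admits a nonzero non-negative right eigenvector $v$ for the eigenvalue $\rho(\mathsf{C}_n).$ Since $\mathbf{J}_{2n-1}$ is a non-negative permutation that commutes with $\mathsf{C}_n,$ the vector $v + \mathbf{J}_{2n-1} v$ is non-negative and satisfies $\mathsf{C}_n (v + \mathbf{J}_{2n-1} v) = \rho(\mathsf{C}_n)(v + \mathbf{J}_{2n-1} v).$ It cannot vanish, because some coordinate $v_i$ of $v$ is strictly positive and both $v_i$ and $(\mathbf{J}_{2n-1} v)_i = v_{2n-i}$ contribute non-negatively to coordinate $i$ of the sum. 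This produces an eigenvector for $\rho(\mathsf{C}_n)$ inside $V^+,$ so $\rho(\mathsf{C}_n)$ is an eigenvalue of $\mathsf{SC}_n$ and therefore $\rho(\mathsf{SC}_n) \ge \rho(\mathsf{C}_n).$ The reverse inequality is automatic because every eigenvalue of $\mathsf{SC}_n$ is an eigenvalue of $\mathsf{C}_n.$ Hence $\rho(\mathsf{C}_n) = \rho(\mathsf{SC}_n),$ which implies the equality of the two maxima in the statement.

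The main obstacle is the bookkeeping in $\mathsf{C}_n T = T\,\mathsf{SC}_n,$ specifically the middle rows $i = n-1$ and $i = n,$ where the normalization factor $2$ on the central coordinate of $T$ is essential; everything else (the block-circulant symmetry, the splitting into $V^\pm,$ and the Perron-Frobenius argument) is routine.
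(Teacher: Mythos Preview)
Your proof is correct and takes a somewhat different route from the paper's. Both arguments ultimately exploit the same reflection symmetry $\mathbf{J}_{2n-1}\mathsf{C}_n\mathbf{J}_{2n-1}=\mathsf{C}_n$, but the paper handles it indirectly: it first \emph{doubles} the central coordinate to obtain an auxiliary $2n\times 2n$ matrix $\mathsf{DC}_n$ with $\mathrm{Spec}(\mathsf{DC}_n)=\mathrm{Spec}(\mathsf{C}_n)\cup\{1\}$, then conjugates $\mathsf{DC}_n$ into a $(2,n)$-block circulant matrix and invokes their earlier block-circulant spectral radius lemma (Lemma~\ref{Spectralcirculating}) to identify $\rho(\mathsf{DC}_n)$ with $\rho(D_{11}+D_{12}\mathbf{J}_n)=\rho(\mathsf{SC}_n)$. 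Your argument works directly on $\mathsf{C}_n$: you split $\mathbb{R}^{2n-1}=V^+\oplus V^-$ along the $\pm 1$-eigenspaces of $\mathbf{J}_{2n-1}$, conjugate $\mathsf{C}_n|_{V^+}$ explicitly to $\mathsf{SC}_n$ via $T$, and then use a short Perron--Frobenius observation (symmetrizing the Perron vector) to force $\rho(\mathsf{C}_n)$ to occur on $V^+$. The paper's detour through $\mathsf{DC}_n$ is natural from its dynamical viewpoint (splitting the central Markov interval at a fixed point; see Remarks~\ref{TP:C} and~\ref{TP:SC}) and lets them reuse Lemma~\ref{Spectralcirculating}; your approach is more self-contained, avoids the auxiliary matrix entirely, and actually yields the slightly stronger conclusion $\rho(\mathsf{C}_n)=\rho(\mathsf{SC}_n)$ without the $\max\{\cdot,1\}$. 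The ``factor $2$'' you single out in the definition of $T$ is exactly what the paper achieves by doubling the central coordinate in $\mathsf{DC}_n$.
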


To prove the above result we need another intermediate
matrix which we obtain from $\mathsf{C}_n.$
We introduce the \emph{divided compacted matrix of rank $n$}
of size $2n \times 2n,$ denoted by $\mathsf{DC}_n = (d_{ij}),$
which we define as follows:
\begin{equation}
d_{ij} = \begin{cases}
c_{ij} & \text{if $i < n$ and $j \le n$,}\\
c_{i,j-1} & \text{if $i< n$ and $j \ge n+1$,}\\
c_{i-1,j} & \text{if $i > n+1$ and $j \le n$,}\\
c_{i-1,j-1} & \text{if $i > n+1$ and $j \ge n+1$,}\\
1 & \text{if $i = n$ and $j \le n$ or
             $i = n+1$ and $j \ge n+1$,}\\
0 & \text{if $i = n$ and $j \ge n+1$ or
             $i = n$ and $j \le n$,}\\
\end{cases}
\end{equation}
where $\mathsf{C}_n = (c_{ij}).$
In matrix form, $\mathsf{DC}_n$ is
(compare with the definition of the matrix $\mathsf{C}_n$
in Page~\pageref{CM-MF}):
{\fontsize{8}{10}
\[
\setcounter{MaxMatrixCols}{21}\arraycolsep=1pt\def\arraystretch{0.5}
\left(\begin{array}{cccccc|cc|cccccc}
0 & 1 & 0 & 0 & \cdots & 0 & 0 & 0 & 0 & 0 & \cdots & 0 & 0 & 0 \\
0 & 0 & 1 & 0 & \cdots & 0 & 0 & 0 & 0 & 0 & \cdots & 0 & 0 & 0 \\
\vdots & \vdots & \vdots & \ddots & \cdots & \vdots & \vdots &
    \vdots & \vdots & \vdots & \vdots & \vdots & \vdots & \vdots \\
\vdots & \vdots & \vdots & \cdots & \ddots & \vdots & \vdots &
    \vdots & \vdots & \vdots & \vdots & \vdots & \vdots & \vdots \\
0 & 0 & 0 & \cdots & 0 & 1 & 1 & 1 & 0 & 0 & \cdots & 0 & 0 & 0 \\
n-2 & n-2  & n-2 & \cdots & n-2 & n-2 & n-2 & n-2 &
                            n-1 & n-1 & \cdots & n-1 & n-1 & n-1 \\
\hline\rule{0pt}{8pt}
1 & 1 & 1 & \cdots & 1 & 1 & 1 & 0 & 0 & \cdots & 0 & 0 & 0 & 0\\
0 & 0 & 0 & \cdots & 0 & 0 & 0 & 1 & 1 & \cdots & 1 & 1 & 1 & 1\\
\hline\rule{0pt}{8pt}
n-1 & n-1 & n-1 & \cdots & n-1 & n-1 &
      n-2 & n-2 & n-2 & n-2 & \cdots & n-2 & n-2 & n-2 \\
0 & 0 & 0 & \cdots & 0 & 0 & 1 & 1 & 1 & 0 & \cdots & 0 & 0 & 0 \\
\vdots & \vdots & \vdots & \vdots & \vdots & \vdots & \vdots &
    \vdots &\vdots & \ddots & \cdots  & \vdots & \vdots & \vdots\\
\vdots & \vdots & \vdots & \vdots & \vdots & \vdots & \vdots &
    \vdots &\vdots & \ddots & \cdots  & \vdots & \vdots & \vdots\\
0 & 0 & 0 & 0 & \cdots & 0 & 0 & 0 & 0 & 0 & \cdots & 1 & 0 & 0 \\
0 & 0 & 0 & 0 & \cdots & 0 & 0 & 0 & 0 & 0 & \cdots & 0 & 1 & 0 \\
\end{array}\right)
\]}

Notice that the matrix $\mathsf{DC}_n$ is indeed the
Markov matrix of a topological model obtained by subdividing the
central interval of the topological model from Remark~\ref{TP:C} at a
fixed point (that exists because the central interval covers itself).

\begin{proof}[Proof of Proposition~\ref{reductionToSuperCompressed}]
First we will prove that
\[
\mathrm{Spec}(\mathsf{DC}_n) = \mathrm{Spec}(\mathsf{C}_n) \cup \{1\},
\]
where $\mathrm{Spec}(\cdot)$ denotes the set of eigenvalues of a
matrix.
To do this observe that $1$ is an eigenvalue of $\mathsf{DC}_n$ with
eigenvector $(0,0,\dots,0,1,-1,0\dots,0,0),$ where the two non-zero
elements of this vector are at the $n$ and $n+1$ entries.
Also, if $\mu$ is an eigenvalue of $\mathsf{C}_n$ with eigenvector
$(v_1, v_2,\dots, v_{2n-1}),$ then, it follows directly from the
definition of the matrix $\mathsf{DC}_n$ that $\mu$ is also an
eigenvalue of $\mathsf{DC}_n$ with eigenvector
\[
\left(v_1, v_2,\dots, v_{n-1},
   \tfrac{v_{n}}{2},\tfrac{v_{n}}{2},
   v_{n+1},\dots,v_{2n-1}\right).
\]
Conversely, if $\mu$ is an eigenvalue of $\mathsf{DC}_n$ with
eigenvector $(v_1, v_2,\dots, v_{2n}),$ then, again from the
definition of the matrix $\mathsf{DC}_n,$ it follows that $\mu$
is also an eigenvalue of $\mathsf{C}_n$ with eigenvector
\[
\left(v_1, v_2,\dots, v_{n-1},
   v_{n} + v_{n+1},
  v_{n+2},\dots,v_{2n}\right).
\]
This proves the statement.

The second step of the proof will be to show that
$\rho\left(\mathsf{DC}_n\right) = \rho\left(\mathsf{SC}_n\right).$
To do this it is convenient to write the matrix $\mathsf{DC}_n$ in
block form, with blocks of size $n \times n$:
\[
\mathsf{DC}_n =
\begin{pmatrix}
D_{11} & D_{12}\\
D_{21} & D_{22}\\
\end{pmatrix} .
\]
Observe that $\mathsf{DC}_n$ is symmetric with respect to the central
point. So,
$D_{21} = \mathbf{J}_{n} D_{12} \mathbf{J}_{n}$ and
$D_{22} = \mathbf{J}_{n} D_{11} \mathbf{J}_{n},$
which amounts to:
\[
\mathsf{DC}_n =
\begin{pmatrix}
D_{11} & D_{12}\\
\mathbf{J}_{n} D_{12} \mathbf{J}_{n} &
\mathbf{J}_{n} D_{11} \mathbf{J}_{n}\\
\end{pmatrix} .
\]

Now let us consider the block matrix of size $2n \times 2n$ defined
by
\[
 \mathbf{Z}:= \begin{pmatrix}
               \mathbf{I}_{n} & \mathbf{0}_{n}\\
               \mathbf{0}_{n} & \mathbf{J}_{n}
              \end{pmatrix},
\]
where $\mathbf{I}_{n}$ denotes the identity matrix of size
$n \times n.$
Clearly $\mathbf{Z}$ is non-singular and
$\mathbf{Z}^{-1} = \mathbf{Z}.$
Hence,
\[
 \mathbf{Z} \mathsf{DC}_n \mathbf{Z}^{-1} = \begin{pmatrix}
D_{11} & D_{12} \mathbf{J}_{n} \\
D_{12} \mathbf{J}_{n} & D_{11}
\end{pmatrix}
\]
because
$\mathbf{J}_{n} \mathbf{J}_{n} = \mathbf{I}_{n}.$
Observe that
$\mathbf{Z} \mathsf{DC}_n \mathbf{Z}^{-1}$
is a non-negative block circulant matrix. Thus,
\[
 \rho\left(\mathsf{DC}_n\right) =
 \rho\left(\mathbf{Z} \mathsf{DC}_n \mathbf{Z}^{-1}\right) =
 \rho\left(D_{11} + D_{12} \mathbf{J}_{n}\right)
\]
by Lemma~\ref{Spectralcirculating}.
Moreover, by direct inspection it follows that
$D_{11} + D_{12} \mathbf{J}_{n} = \mathsf{SC}_n.$

Summarizing, we have proved
\[
 \max\left\{\rho\left(\mathsf{C}_n\right),1\right\} =
 \max\left\{\rho\left(\mathsf{DC}_n\right),1\right\} =
 \max\left\{\rho\left(\mathsf{SC}_n\right),1\right\}.
\]
\end{proof}

\begin{remark}\label{TP:SC}
The topological model in Remark~\ref{TP:C} has a fixed point and
commutes with the symmetry of degree -1 with respect to this fixed
point.
The quotient space obtained by identifying each orbit of the symmetry to a
point is a closed interval, and the induced map on this quotient space is also
a Markov map. The matrix $\mathsf{SC}_n$ is in fact the Markov matrix
of this quotient map.
\end{remark}

\section{The spectral radius of $\mathsf{SC}_n$ and proof of
Theorem~\ref{MainTh}}\label{sec:Rome}

To compute the spectral radius of $\mathsf{SC}_n$ we will use the
\emph{rome method} proposed in \cite{BGMY}.
To this end we have to introduce some notation.

Let $M = (m_{ij})$ be a $k\times k$ matrix.
Given a sequence $p = (p_j)_{j=0}^{\ell}$ of elements of
$\{1,2,\dots,k\}$ we define the \emph{width of $p$,} denoted by
$w(p),$  as the number $\prod_{j=1}^{\ell} m_{p_{j-1}p_j}$.
If $w(p)\ne 0$ then $p$ is called a \emph{path of length $\ell$.}
The length of a path $p$ will be denoted by $\ell(p)$.
A loop is a path such that $p_{\ell} = p_0$ i.e.\ that begins and ends at
the same index.

A subset $R$ of $\{1,2,\dots,k\}$ is called a \emph{rome\/}
if there is no loop outside $R$, i.e.\ there is no path
$(p_j)_{j=0}^{\ell}$ such that $p_{\ell} = p_0$ and
$\set{p_j}{0\le j\le \ell}$ is disjoint from $R$.
For a rome $R$ a path $(p_j)_{j=0}^{\ell}$ is called \emph{simple\/} if
$p_i\in R$ for $i=0,\ell$ and $p_i\notin R$ for $i=1,2,\dots,\ell-1$.

If $R=\{r_1,r_2,\dots,r_{\ell}\}$ is a rome of a matrix $M$ then we
define an $\ell\times\ell$ matrix-valued real function $M_R(x)$ by
setting $M_R(x) = (a_{ij}(x))$, where $a_{ij}(x) = \sum_p w(p)\cdot
x^{-\ell(p)}$, where the summation is over all simple paths
originating at $r_i$ and terminating at $r_j$.

\begin{theorem}[Theorem 1.7 of \cite{BGMY}]\label{rome}
If $R$ is a rome of cardinality $\ell$ of a $k\times k$ matrix $M$ then
the characteristic polynomial of $M$ is equal to
\[
 (-1)^{k-\ell} x^k \det(M_R(x) - \mathbf{I}_{\ell}).
\]
\end{theorem}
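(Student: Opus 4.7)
The plan is to prove this by a block decomposition of $M$ followed by a Schur complement computation, using the rome hypothesis to force a key submatrix to be nilpotent and to convert the Neumann series expansion into a sum over simple paths.

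After relabelling the basis vectors I may assume $R = \{1, 2, \dots, \ell\}$, so that
\[
  M = \begin{pmatrix} A & B \\ C & D \end{pmatrix},
\]
where $A$ is $\ell \times \ell$ (the $R$-to-$R$ block), $D$ is $(k-\ell) \times (k-\ell)$ (the $(\{1,\dots,k\}\setminus R)$-to-$(\{1,\dots,k\}\setminus R)$ block), and $B$, $C$ are the mixed blocks. The rome condition says there is no loop in the digraph induced by $D$; therefore $D$ is nilpotent with $D^{k-\ell}=0$, hence $\det(x\mathbf{I}_{k-\ell} - D) = x^{k-\ell}$ and, as formal (in fact polynomial) inverse,
\[
  (x\mathbf{I}_{k-\ell} - D)^{-1} = \sum_{n=0}^{k-\ell-1} D^n\, x^{-n-1}.
\]

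Next I would apply the Schur complement to obtain
\[
  \det(x\mathbf{I}_{k} - M) \;=\; \det(x\mathbf{I}_{k-\ell} - D)\,\det\bigl(x\mathbf{I}_{\ell} - A - B(x\mathbf{I}_{k-\ell} - D)^{-1}C\bigr) \;=\; x^{k-\ell}\,\det\bigl(x\mathbf{I}_{\ell} - A - B(x\mathbf{I}_{k-\ell}-D)^{-1}C\bigr).
\]
The key combinatorial step is the identity $A + B(x\mathbf{I}_{k-\ell}-D)^{-1}C = x\,M_R(x)$. Indeed, $(BD^nC)_{ij}$ expands as $\sum m_{r_i q_0}m_{q_0q_1}\cdots m_{q_{n-1}q_n}m_{q_nr_j}$ over tuples $(q_0,\dots,q_n)$ with every $q_s \notin R$, which is precisely the total width of simple paths of length $n+2$ from $r_i$ to $r_j$; the $x^{-n-1}$ attached to this term is $x$ times the weight $x^{-(n+2)}$ that such a path contributes to $a_{ij}(x)$. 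The length-$1$ simple paths are accounted for by $A_{ij} = m_{r_ir_j}$, which equals $x$ times the $x^{-1}$ contribution to $a_{ij}(x)$. Summing these contributions yields the claimed identity.

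Substituting back gives
\[
  \det(x\mathbf{I}_\ell - A - B(x\mathbf{I}_{k-\ell}-D)^{-1}C) = \det\bigl(x\mathbf{I}_\ell - xM_R(x)\bigr) = x^\ell\det\bigl(\mathbf{I}_\ell - M_R(x)\bigr) = (-1)^\ell x^\ell \det\bigl(M_R(x) - \mathbf{I}_\ell\bigr),
\]
so $\det(x\mathbf{I}_k - M) = (-1)^\ell x^k \det(M_R(x) - \mathbf{I}_\ell)$. Converting to the characteristic polynomial $\det(M - x\mathbf{I}_k) = (-1)^k \det(x\mathbf{I}_k - M)$ and using $(-1)^{k+\ell} = (-1)^{k-\ell}$ yields exactly the formula in the statement. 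The only real subtlety is the bookkeeping in identifying the matrix entries of $A + B(x\mathbf{I}-D)^{-1}C$ with $x M_R(x)$; the nilpotency of $D$ makes everything an honest polynomial identity in $x$ (no convergence issues), so once the combinatorial identification is confirmed the rest is formal manipulation.
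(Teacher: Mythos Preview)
Your argument is correct. Note, however, that the paper does not supply its own proof of this theorem: it is quoted verbatim from \cite{BGMY} (Block--Guckenheimer--Misiurewicz--Young) and used as a black box in the proof of Proposition~\ref{polynomial}, so there is no in-paper proof to compare against. Your Schur-complement route---block-decomposing so that the complement of the rome corresponds to a nilpotent block $D$, inverting $x\mathbf{I}-D$ as a finite geometric series, and then reading the entries of $A + B(x\mathbf{I}-D)^{-1}C$ as $x$ times the simple-path sums---is a clean and standard way to establish the result.
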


To use Theorem~\ref{rome} it is helpful to represent the
matrix $M$ in form of a combinatorial graph which amounts to draw all
paths of length 1 associated to $M.$ To do this we introduce the
following notation. A path $(i,j)$ of length 1 will be written
as $i \labelarrow{w} j,$ where $w$ denotes the width of the
path.
For the matrix $M$ the width $w$ of the path
$i \labelarrow{w} j,$ is just the entry $(M)_{i,j} \ne 0$.
Observe that, with this notation, a path $p=(p_j)_{j=0}^k$ is
written as
\[
 p_0 \labelarrow{w_0} p_1 \labelarrow{w_1} \cdots p_{k-1}
\labelarrow{w_{k-1}} p_k
\]
and $w(p) = \prod_{i=0}^{k-1} w_i$.

We will compute the spectral radius of $\mathsf{SC}_n$ in
Proposition~\ref{polynomial} by using Theorem~\ref{rome}.
In Figure~\ref{GraphSCg} we show the combinatorial graph associated to
$\mathsf{SC}_n.$
\newcommand{\wwcolor}[1]{\textcolor{black}{#1}}
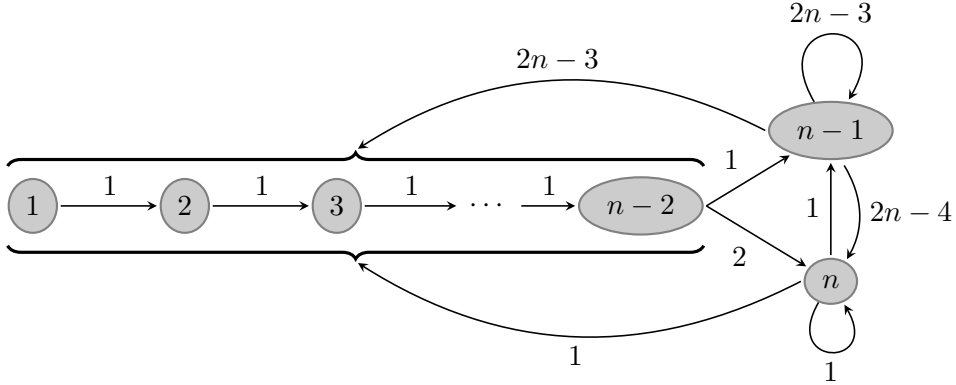
\begin{figure}
\tikzstyle{place}=[ellipse,draw=black!50,fill=black!20,thick]
\tikzstyle{post}=[->,shorten >=1pt,>=stealth,semithick]
\begin{tikzpicture}
\foreach \p in {1,2,3}{ \node[place]  (\p )  at (2*\p ,0) { \p}; }
\node (4) at (8,0) {$\cdots$};
\node[place] (2gm2) at (10,0) {$n-2$};

\node (phantom) at (12.5,0) {};

\node[place] (2g) [below of=phantom] {$n$};
\node[place] (2gm1) [above of=phantom] {$n-1$};

\draw[decorate, very thick, decoration={brace,amplitude=5pt, mirror, raise=15pt}]
               (1.west) -- (2gm2.east) node[midway,yshift=-21pt] (LB) {};
\draw[decorate, very thick, decoration={brace,amplitude=5pt, raise=15pt}]
               (1.west) -- (2gm2.east) node[midway,yshift=21pt] (UB) {};

\path[post] (1.east)+(1pt,0) edge node[auto] {\wwcolor{1}} (2)
            (2.east)+(1pt,0) edge node[auto] {\wwcolor{1}} (3)
            (3.east)+(1pt,0) edge node[auto] {\wwcolor{1}} (4)
            (4.east)+(1pt,0) edge node[auto] {\wwcolor{1}} (2gm2)
            (2gm2.east)+(1pt,0) edge node[auto] {\wwcolor{1}} (2gm1)
            (2gm2.east)+(1pt,0) edge node[auto,swap] {\wwcolor{2}} (2g)
            (2g.north)+(0,1pt) edge node[auto] {\wwcolor{1}} (2gm1)
            (2g) edge [in=300,out=240,loop] node[auto,swap] {\wwcolor{1}} ()
            (2gm1.south)+(5pt,-2pt) edge[bend left] node[auto] {\wwcolor{$2n-4$}} (2g)
            (2gm1) edge [in=60,out=120,loop] node[auto] {\wwcolor{$2n-3$}} ()
            (2g.west)+(-1pt,0) edge[bend left] node[auto] {\wwcolor{1}} (LB.center)
            (2gm1.west)+(-1pt,0) edge[bend right] node[auto,swap] {\wwcolor{$2n-3$}} (UB.center);
\end{tikzpicture}
\caption{The combinatorial graph associated to $\mathsf{SC}_n.$
The arrows ending at braces indicate multiple arrows with the
same weight, each one directed towards a node under the
brace.}\label{GraphSCg}
\end{figure}

\begin{remark}
The combinatorial graph associated to $\mathsf{SC}_n$ shown in
Figure~\ref{GraphSCg} is in fact the generalized Markov graph of the
topological model obtained in Remark~\ref{TP:SC}.
\end{remark}

\begin{proposition}\label{polynomial}
The spectral radius of $\mathsf{SC}_n$ is the largest root of the
polynomial $Q_n(x).$
\end{proposition}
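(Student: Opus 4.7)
The plan is to apply the Rome method (Theorem~\ref{rome}) directly to the combinatorial graph of $\mathsf{SC}_n$ shown in Figure~\ref{GraphSCg}, choosing a rome of size $2$ so that $M_R(x)$ is only $2 \times 2$ and the determinant can be computed by hand. The natural candidate is
\[
R = \{n-1, n\}.
\]
First I would verify that this is indeed a rome: the self-loops live only at $n-1$ and $n$, while the remaining edges among the nodes $\{1,2,\dots,n-2\}$ are just the forward arrows $1 \to 2 \to \cdots \to n-2$, which contain no cycle. Therefore every loop of the graph meets $R$.

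Next I would enumerate the simple paths between elements of $R.$ From the graph, a simple path that leaves $R$ must enter some node $k \in \{1,\dots,n-2\}$ (via the weight-$(2n-3)$ edge from $n-1$ or the weight-$1$ edge from $n$), then follow the unique forward chain $k \to k+1 \to \cdots \to n-2$, and finally re-enter $R$ via either $n-2 \to n-1$ (weight $1$) or $n-2 \to n$ (weight $2$). Combined with the direct one-step options (the two self-loops and the two edges between $n-1$ and $n$), this gives closed formulas for each $a_{ij}(x)$ as a geometric-type sum in $x^{-1}.$ Writing $S := \sum_{j=1}^{n-1} x^{-j},$ one obtains
\[
M_R(x) = \begin{pmatrix} (2n-3)S & 2(2n-3)S - 2(n-1)x^{-1} \\ S & 2S - x^{-1} \end{pmatrix}.
\]

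Then I would plug this into Theorem~\ref{rome}. Since $\lvert R \rvert = 2$ and the matrix is of size $n \times n,$ the characteristic polynomial equals $(-1)^n x^n \det(M_R(x) - \mathbf{I}_2)$. A direct expansion of the $2\times 2$ determinant collapses the quadratic-in-$S$ terms and leaves a linear expression in $S$; multiplying by $x^n$ converts $S$ and $x^{-1}S$ into the polynomial blocks $\sum_{k=1}^{n-1}x^k$ and $\sum_{k=0}^{n-2}x^k,$ respectively, and after combining like terms the result is exactly
\[
x^n - 2(n-1)\sum_{j=1}^{n-1} x^j + 1 \;=\; Q_n(x),
\]
up to the sign $(-1)^n.$ Hence the characteristic polynomial of $\mathsf{SC}_n$ is $\pm Q_n(x).$

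Finally, since $\mathsf{SC}_n$ is a non-negative integer matrix, Perron--Frobenius guarantees that its spectral radius is a real non-negative eigenvalue, hence a real root of $Q_n.$ An elementary check shows $Q_n(1) = 2 - 2(n-1)^2 < 0$ and $Q_n(x) \to \infty$ as $x\to\infty,$ so $Q_n$ has at least one real root strictly greater than $1$; combined with the fact (used in the statement of Theorem~\ref{MainTh}) that this root is unique in $(1,\infty),$ it must be the Perron root $\rho(\mathsf{SC}_n),$ which is the assertion. The main obstacle I expect is purely clerical: correctly organizing the geometric sums so that the collapse to $Q_n(x)$ is transparent rather than a lucky cancellation; the structural content (rome choice, graph analysis) is forced by the shape of $\mathsf{SC}_n.$
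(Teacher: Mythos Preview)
Your proposal is correct and follows essentially the same route as the paper: the same rome $R=\{n-1,n\}$, the same matrix $M_R(x)$ (your $S$ is the paper's $x^{-1}+z(x)$), and the same determinant computation leading to $Q_n(x)$. The only minor difference is that the paper first observes irreducibility of $\mathsf{SC}_n$ so that Perron--Frobenius gives directly that $\rho(\mathsf{SC}_n)$ is the \emph{largest} real root of the characteristic polynomial, whereas you instead forward-reference the uniqueness of the root $>1$ (proved in Lemma~\ref{entropybounds}); either closes the argument.
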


\begin{proof}
By direct inspection of the graph of Figure~\ref{GraphSCg} it
follows that $\mathsf{SC}_n$ is an irreducible non-negative integer
matrix. Hence, by the Perron-Frobenius Theorem (see \cite{Gant}), we
get that the spectral radius of $\mathsf{SC}_n$ is the largest
eigenvalue of the characteristic polynomial of $\mathsf{SC}_n.$ It
is larger than 1 and simple. So, to prove the theorem, we have to
show that the characteristic polynomial of $\mathsf{SC}_n$ is
$Q_{n}(x).$

Clearly $R = \{n-1,n\}$ is a rome of $\mathsf{SC}_n$ (see
Figure~\ref{GraphSCg}). Hence,
\[
M_R(x) = \begin{pmatrix}
\beta\bigl(x^{-1} + z(x) \bigr) & (\beta-1)x^{-1} + 2 \beta z(x) \\
 x^{-1} + z(x) & x^{-1} + 2z(x)
\end{pmatrix}\,
\]
where $\beta = 2n-3,$ $z(x):= \sum_{\ell=2}^{n-1} x^{-\ell}.$

By Theorem~\ref{rome}, the characteristic polynomial of
$\mathsf{SC}_n$ is
\begin{align*}
(-1)^{n-2} & x^{n} \begin{vmatrix}
\beta\bigl(x^{-1} + z(x) \bigr) - 1 &
     2\beta\bigl(x^{-1} + z(x) \bigr) - (\beta + 1)x^{-1}\\
 x^{-1} + z(x) & 2\bigl(x^{-1} + z(x) \bigr) - x^{-1} - 1
\end{vmatrix}\\
& = x^{n}
    \Bigl(
       \bigl(x^{-1} - \beta - 2\bigr)
       \bigl(x^{-1} + z(x) \bigr) + x^{-1} + 1
    \Bigr)\\
& = x^{n}
    \bigl(x^{-1} - (2n-1)\bigr) \left(\sum_{\ell=1}^{n-1} x^{-\ell}\right)
    + x^{n-1} + x^{n}\\
& = x^{n} - (2n - 2) \sum_{j=1}^{n-1} x^{j} + 1.
\end{align*}
This ends the proof of the proposition.
\end{proof}

Next we prove a technical lemma that studies the polynomial
\eqref{thepolynomial} and gives the bounds for $\lambda_n$.

\begin{lemma}\label{entropybounds}
For every $n \ge 3,$ $Q_n(x)$  has a unique real root
$\lambda_n$ larger than one. Moreover, for $n \ge 4,$
\[
 2n-1 - \frac{1}{(2n-1)^{n-2}} < \lambda_n < 2n-1.
\]
\end{lemma}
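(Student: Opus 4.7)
The plan is to start from the telescoping identity
\[
(x-1)Q_n(x) = x^{n+1} - (2n-1)x^n + (2n-1)x - 1,
\]
which follows from $(x-1)\sum_{j=1}^{n-1} x^j = x^n - x$ and which I will use throughout. From this I read off $Q_n(2n-1) = 2n$ (the top two terms cancel and $(2n-2)Q_n(2n-1) = (2n-1)^2 - 1 = (2n)(2n-2)$), and I also compute $Q_n(1) = 2 - 2(n-1)^2 < 0$ for $n \ge 3$ and $Q_n(0) = 1 > 0$.

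For uniqueness I apply Descartes' rule of signs to $Q_n$. Its coefficient sequence $+1, -2(n-1), \dots, -2(n-1), +1$ has exactly two sign changes, so there are at most two positive real roots. The three evaluations above locate one root in $(0,1)$ and one in $(1, 2n-1)$, exhausting this quota. Consequently $\lambda_n$ exists and is the unique root of $Q_n$ larger than $1$, it lies in $(1, 2n-1)$, and $Q_n > 0$ on $(\lambda_n, \infty)$; in particular the upper bound $\lambda_n < 2n-1$ is settled.

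For the lower bound I set $c := 2n-1$ and $\epsilon := c^{-(n-2)}$, chosen so that the key identity $\epsilon c^n = c^2$ holds. Substituting $x = c - \epsilon$ into the telescoping identity gives
\[
(c-\epsilon-1)\,Q_n(c-\epsilon) = -\epsilon(c-\epsilon)^n + c^2 - c\epsilon - 1,
\]
and since $c-\epsilon-1>0$ it suffices to show $\epsilon(c-\epsilon)^n > c^2 - c\epsilon - 1$. Bernoulli's inequality gives
\[
\epsilon(c-\epsilon)^n = c^2\bigl(1 - \tfrac{\epsilon}{c}\bigr)^n \ge c^2 - nc\epsilon = c^2 - nc^{3-n},
\]
so the required inequality reduces to $(n-1)c^{3-n} < 1$, equivalently $(2n-1)^{n-3} > n-1$. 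This holds for $n=4$ (since $7>3$) and a fortiori for $n \ge 5$, precisely matching the hypothesis $n \ge 4$. No step is genuinely hard; the only mildly delicate point is the choice of $\epsilon$, tuned so that the first-order Bernoulli correction $nc^{3-n}$ is sharp enough to beat the linear defect $c^{3-n}+1$ coming from $c^2 - c\epsilon - 1$.
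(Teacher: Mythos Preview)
Your argument is correct. The overall strategy---evaluate $Q_n$ at $0$, $1$, and $2n-1$, then estimate at $2n-1-\epsilon$ via a Bernoulli-type bound---matches the paper's, but you streamline two points. For uniqueness the paper shows $Q_n'<0$ on $[0,1]$ to get a unique root in $(0,1)$ and then invokes the reciprocal-polynomial identity $Q_n(x)=x^nQ_n(x^{-1})$ to transport this to $(1,\infty)$; your use of Descartes' rule (two sign changes, hence at most two positive roots) is shorter and avoids both the calculus and the reciprocity. For the lower bound the paper also reduces to the inequality $(c-\epsilon)^n > c^n - nc$, i.e.\ Bernoulli, but reaches it after rewriting $Q_n(z)$ via the closed-form geometric sum; your telescoping identity $(x-1)Q_n(x)=x^{n+1}-(2n-1)x^n+(2n-1)x-1$ does the same work more transparently and unifies the computation of $Q_n(2n-1)=2n$ with the lower-bound estimate. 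One minor point: Bernoulli gives only $\ge$, so to land the strict inequality you should note either that Bernoulli is strict for $n\ge 2$ and $\epsilon\ne 0$, or that the final reduced inequality $(2n-1)^{n-3}>n-1$ is itself strict; either suffices.
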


\begin{proof}
Observe that
$Q_n(0) = 1,$
$Q_n(1) = -2n(n-2) < 0$ and
\[
 Q'_n(x) = n\left(
    x^{n-1} - \frac{2(n - 1)}{n} \sum_{j=1}^{n-1} jx^{j-1}\right)
\le n\left( x^{n-1} - \frac{2(n - 1)}{n} \right).
\]
Since $x^{n-1} - \tfrac{2(n - 1)}{n}$ is negative for every for $n \ge 3$
and $x \in [0,1],$ it follows that $Q_n$ has a unique root in $(0,1)$.

On the other hand, it is easy to see that
$Q_n(x) = x^n Q_n(x^{-1})$
(that is, $Q_n$ is a \emph{reciprocal polynomial}) and, hence,
$z$ is a root of $Q_n$ if and only if $z^{-1}$ is a root of $Q_n$.
Consequently, $Q_n(x)$ has a unique real root larger than one.

Also,
\[
Q_n(2n-1) =
(2n-1)^n - 2(n-1) \frac{(2n-1)^n-(2n-1)}{2(n-1)} + 1 =
2n.
\]
So, $\lambda_n < 2n-1.$

To end the proof of the lemma it is enough to show that
\[
Q_n\left(2n-1 - \tfrac{1}{(2n-1)^{n-2}}\right) < 0
\]
for $n \ge 4.$
We have
\begin{align*}
Q_n(z) &= z^n - 2(n-1)s \frac{z^n-z}{2(n-1)s -1} + 1 \\
       &= -\frac{z^n - 4n(n-1)s + 2n - 1}{2(n-1)s -1},
\end{align*}
where
$z := 2n-1 - \tfrac{1}{s}$ and
$s := (2n-1)^{n-2}.$

Since $2(n-1)s - 1 > 2n - 1 > 0,$
$z^n - 4n(n-1)s > 0$ implies that $Q_n(z) < 0.$

An exercise shows that $z^n > (2n-1)^{n} - n(2n-1).$
Hence,
\[
 z^n - 4n(n-1)s >
  (2n-1)^{n-2} \left(1 - \frac{n}{(2n-1)^{n-3}} \right)
\]
and $z^n - 4n(n-1)s > 0$ for $n \ge 4.$
So, $Q_n(z) < 0$ and the lemma is proved.
\end{proof}

\begin{proof}[Proof of Theorem~\ref{MainTh}]
It follows from
Theorem~\ref{old-theo:2.3},
Corollaries~\ref{reductionToCompressed} and
\ref{reductionToCompressedisorient},
Propositions~\ref{reductionToSuperCompressed} and \ref{polynomial},
and Lemma~\ref{entropybounds}.

\end{proof}

\bibliographystyle{plain}
\bibliography{HvolAsymp}
\end{document}